\documentclass{calabi}

\usepackage{euscript}
\usepackage{parskip}
\usepackage{leftindex}

\usepackage{tikz-cd}

\newcommand{\bQ}{\mathbb{Q}}
\newcommand{\bN}{\mathbb{N}}
\newcommand{\bP}{\mathbb{P}}
\newcommand{\bC}{\mathbb{C}}
\newcommand{\bR}{\mathbb{R}}
\newcommand{\bZ}{\mathbb{Z}}

\newcommand{\bK}{\mathbb{K}}

\newcommand{\Hess}{\mathrm{Hess}}

\newcommand{\val}{\mathrm{val}}

\begin{document}

\title{Orbifold Floer spectral invariants, symmetric product links \\ and  Weyl laws}
\date{\today}
\author{Cheuk Yu Mak, Sobhan Seyfaddini, and Ivan Smith}
\maketitle

\begin{abstract} 
\noindent 
We explain a strategy, based on spectral invariants on symmetric product orbifolds,  for proving the smooth closing lemma for Hamiltonian diffeomorphisms of a symplectic manifold when the orbifold quantum cohomologies of its symmetric products possess suitable idempotents. We relate the existence of such idempotents to the manifold containing a sequence of Lagrangian links, whose number of components tends to infinity, satisfying a number of properties. Orbifold Floer cohomology for global quotient orbifolds is used axiomatically, and is constructed in the companion paper \cite{MSS}.  We illustrate this strategy by giving a new proof of the smooth closing lemma for area-preserving diffeomorphisms of the 2-sphere. The construction of suitable Lagrangian links in higher dimensions remains an intriguing open problem.
\end{abstract}

\tableofcontents

\section{Introduction}

\sloppy 

\subsection{Overview}

Let $(M,\omega_M)$ be a closed symplectic manifold such that $[\omega_M] \in H^2(M,\mathbb{Q})$.
This paper explains a number of applications of orbifold Floer cohomology, applied to the symmetric product orbifolds $Y:=\Sym^k(M)$, $k=1,2,\ldots$, to Hamiltonian dynamics.  The companion to this paper gives a construction of bulk-deformed orbifold Hamiltonian Floer cohomology for global quotients which readily yields our required properties. The paper \cite{Cho-Poddar} constructed orbifold Lagrangian Floer theory, and we summarise their results, and axiomatise the mild extensions thereof that our applications require (primarily, Assumption \ref{a:OCCO}), in the final section of the paper. See Section \ref{sec:structure} for a brief explanation.

Thus, this paper 
has two goals:

\begin{enumerate} 
\item We give a `template method' for proving a Weyl law for the spectral invariants on $M$ associated to a sequence of idempotents in quantum cohomologies of symmetric product orbifolds $\Sym^k(M)$.  
\item We  give a new proof of the `smooth closing lemma' for Hamiltonian symplectomorphisms of $S^2$, using idempotents associated to Lagrangian links comprising $k$ parallel circles, with $k\to\infty$.
\end{enumerate}

At present, we do not know how to construct Lagrangian links satisfying our requirements on higher-dimensional symplectic manifolds. We hope that the `axiomatic' presentation of our results will help focus attention on the importance of finding Floer-theoretically non-trivial links with arbitrarily small components, which can therefore potentially probe the small-scale geometry of $M$.   Lagrangian links satisfying our particular requirements would necessarily be {\it space filling}, in the sense that the union of their components becomes dense. 

 A fundamental question asks for which open sets $U\subset M$ and positive values $e>0$ can one create a periodic orbit meeting $U$ by perturbing a Hamiltonian by an amount $<e$ (measured in $C^{\infty}$-norm). If both $U,e$ can be taken arbitrarily small, one obtains the smooth closing lemma.  Classical Floer theory lets one take $U$ to be a neighborhood of a Floer-theoretically non-trivial Lagrangian; in particular such $U$ are non-displaceable subsets. Using Lagrangian links in toric surfaces \cite{MS19, Pol-Shel21}, we give new examples where $U$ is displaceable and $e$ is essentially the displacement energy of $U$ (see Theorem \ref{t:links and orbits}). In the case of $S^2$, Lagrangian links comprising  parallel circles accumulating at the poles of the sphere, allow one to take $U, e$ to be arbitrarily small,  hence yielding another proof of the smooth closing lemma on $S^2$; see Remark \ref{rem:links and orbits}.

Herman’s well-known counterexample \cite{Herman2, Herman1} demonstrates that the $C^\infty$ closing lemma and Weyl laws fail to hold on general symplectic manifolds of dimension greater than two. However, whether the closing lemma holds for Hamiltonian diffeomorphisms of the ball remains an important open question. The prevailing perspective \cite{Fish-Hofer20} suggests that in Hamiltonian systems, the closing lemma is tied to the richness of the symplectic topology of the ambient space, as witnessed for instance, in quantum cohomology. In this work, we establish a connection between the closing lemma and the structure of orbifold quantum cohomology on symmetric products.

\subsection{Spectral invariants for global quotient orbifolds}

In \cite{MSS} we define an orbifold version of bulk-deformed Hamiltonian Floer cohomology for a global quotient orbifold $Y = [X / \Gamma]$ where $\Gamma$ is a finite group acting by symplectomorphisms on the smooth symplectic manifold $X$ so that $\omega_X$ descends to an orbifold symplectic form $\omega_Y$ on $Y$.  We used this to construct spectral invariants on $C^\infty([0,1] \times Y)$.
We will apply it to the setting $X=M^k$ and $Y=\Sym^k(M)$.

 To state the main output of \cite{MSS}, we first formulate some background.


An orbifold $Y$ has an inertia stack $IY$, which splits into a union of components called `twisted sectors'.  For a global quotient $Y = [X/\Gamma]$, the twisted sectors are indexed by the set of conjugacy classes $\|\Gamma\|$ of $\Gamma$, with the conjugacy class of $g$ giving the sector $X^g / C(g)$ with $X^g \subset X$ the fixed point locus of $g$ and $C(g) \subset \Gamma$ the centraliser of $g$.  
Let $\|\Gamma\|^{\circ}:=\|\Gamma\|\setminus\{id\}$ be the set of non-trivial conjugacy classes.
For any $g \in \Gamma$, let $(g)$ be the conjugacy class of $g$.

We assume that $\omega_X$ (and hence $\omega_Y$) represent rational cohomology classes. Let $\mathbb{K}$ be a characteristic $0$ field, $\Pi \subset \bR$ be a discrete subgroup which contains  $\frac{1}{|\Gamma|}\omega_X(H_2(X;\mathbb{Z})) \subset \mathbb{R}$,  and 
\[\Lambda_\Pi:=\left\{\sum_{i=1}^{\infty} a_iT^{b_i} \, \big| \, a_i \in \mathbb{K},  b_i \in \Pi, \lim_i b_i=\infty \right\}.\]
Consider the valuation
\[
\val:\Lambda_{\Pi} \to \mathbb{R} \cup \{\infty\}, \quad \val\left(\sum a_iT^{b_i} \right)=\min\{b_i\}. 
\]
Let $\Lambda_{>0}:=\val^{-1}(\mathbb{R}_{>0} \cup \{\infty\})$ 
and $\Lambda_{\ge 0}:=\val^{-1}(\mathbb{R}_{ \ge 0} \cup \{\infty\})$.

We will consider a bulk deformation class which is a linear combination of the fundamental class of the non-trivial sectors so it is of the form
\begin{align}\label{eq:bulkb}
\frak{b}=\sum_{(g) \in \|\Gamma\|^{\circ}} \frak{v}_{(g)} PD[X^g/C(g)] \in H_{orb}(Y;\Lambda_{>0}),  \quad \frak{v}_{(g)} \in \Lambda_{>0}
\end{align}
where $PD$ stands for the Poincar\'e dual.

To define the action spectrum in which our spectral invariants take values, we briefly recall the construction of the chain groups underlying orbifold Hamiltonian Floer theory for a smooth $H_Y: Y \times [0,1] \to \bR$. A generator of $CF^*(H_Y;\frak{b})$ is an oriented representable $1$-periodic Hamiltonian orbits of $H_Y$ in $Y$ together with a formal capping. To spell this out:
\begin{enumerate}
    \item The orbit consists of a
commutative diagram  of the form 
\begin{align}\label{eq:orbit}
\xymatrix{
\hat{S} \ar[r]^{\hat{x}} \ar[d]^{\pi_x} & X \ar[d] \\ S^1 \ar[r]^{x} & Y
}    
\end{align}
such that $\pi_x$ is the quotient map of a free $\Gamma$-space $\hat{S}$, $x$ is a Hamiltonian orbit of $H_Y$ and $\hat{x}$ is $\Gamma$-equivariant.
We require that $x$ is orientable in the sense that when the isotropy group of $x$ is non-trivial, the action of the isotropy group on the orientation line of $x$ preserves the orientation.
\item The capping disc is a representable orbifold disc  $\bar{c}_x: D^2 \to Y$ such that $\bar{c}_x|_{\partial D^2}=x(-t)$.\footnote{The minus sign is due to the convention in \cite{MSS}.} We denote by $k_{(g)}(\bar{c}_x)$ the number of orbifold points of $\bar{c}_x$ whose evaluation map go to $X^g/C(g)$.
\end{enumerate}

The action of a capped orbit is defined to be
\begin{align}\label{eqn:action of capping}
\cA(\bar{c}_x):=\int_x H_Y dt-\int_{D^2} \bar{c}_x^*\omega_Y - k_{(g)}(\bar{c}_x)\cdot \val(\frak{v}_{(g)})    
\end{align}

If $\bar{c}_x$ and $\bar{c}'_x$ are two different cappings of $x$, then we can reverse the orientation of $\bar{c}_x$ and glue to $\bar{c}'_x$ to get on orbifold sphere $A$.
Then $\cA(\bar{c}_x)-\cA(\bar{c}'_x) \in \omega_Y(A)+\sum_{(g)} \val(\frak{v}_{(g)}) \mathbb{Z} \subset \Pi$.

We will equip our caps with a further piece of data, a real number $U \in \Pi$.
 We use the notation $c_x$ to denote $(x,\bar{c}_x,U)$ or $(\bar{c}_x,U)$.
We call $c_x$ a \emph{formal capping} of $x$ and define the action of $c_x=(\bar{c}_x,U)$ to be
\[
\cA(c_x):=\cA(\bar{c}_x)-U
\]
Two formal capped marked orbits $c_x=(\bar{c}_x, U_x)$ and $c_y=(\bar{c}_y,  U_y)$ are  equivalent if $x=y$ and
\begin{align}
\cA(c_x)= \cA(c_y)
\end{align} 

Finally, we have:

\begin{definition}\label{d:action}
The \emph{action spectrum} $\Spec(Y,H_Y)$ is the set of values of the action over all formal capped orbits. 
\end{definition} 

With this in hand, we can state:

\begin{theo}[see \cite{MSS}]\label{t:spectral_intro}
Let $Y=[X/\Gamma]$ be a global quotient orbifold. 
Let $\frak{b}$ be of the form \eqref{eq:bulkb}, and $H_Y\in C^{\infty}(S^1 \times Y)$ be a non-degenerate Hamiltonian function.

There is a $\frak{b}$-deformed orbifold Hamiltonian Floer cohomology group $HF^*(H_Y;\frak{b})$ with an action filtration (by $HF(H_Y;\frak{b})^{<a}$ for $a \in \mathbb{R}$), well-defined up to canonical isomorphism (i.e. independent of the choice of almost complex structure and independent of the presentation of $Y$ as a global quotient), which is additively isomorphic to the $\mathfrak{b}$-deformed orbifold quantum cohomology $QH_{orb}(Y,\mathfrak{b})$ (which in turn is additively isomorphic to $QH_{orb}(Y)$ with $\frak{b}=0$). 

There is an action decreasing pair of pants product
 \[
HF(H_Y;\frak{b}) \otimes HF(H'_Y;\frak{b}) \longrightarrow HF(H_Y\# H'_Y;\frak{b})
\]
    which depends on the bulk $\frak{b}$.
    
For any class $x \in QH_{orb}(Y,\mathfrak{b})$, there is an associated spectral invariant $c^{\frak{b}}(x,-) = c(x,-):C^{\infty}(S^1 \times Y) \to \mathbb{R}$ with the following properties.

\begin{enumerate}
   \item (Symplectic invariance) $c^{\frak{b}}(x,H_Y\circ \psi) = c^{\frak{b}}(x,H_Y)$ for any $\psi \in \Symp(Y, \omega_Y)$;
    \item (Spectrality) for any $H_Y$, $c^{\frak{b}}(x,H_Y)$ lies in the action spectrum $\Spec(Y,H_Y)$;
\item (Hofer Lipschitz) for any $H_Y, H'_Y$
$$  \int_{0}^1 \min  ((H_Y)_t - (H'_Y)_t) dt \leq c^{\frak{b}}(x,H_Y) - c^{\frak{b}}(x,H'_Y) \leq  \int_{0}^1 \max\,  ((H_Y)_t - (H'_Y)_t) dt;$$
\item (Monotonicity) if $(H_Y)_t \leq (H'_Y)_t$ then $c^{\frak{b}}(x,H_Y) \leq c^{\frak{b}}(x,H'_Y)$;
\item (Homotopy invariance) if $H_Y, H'_Y$ are mean-normalized and determine the same point of the universal cover $\widetilde{\Ham}(Y,\omega_Y)$, then $c^{\frak{b}}(x,H_Y) = c^{\frak{b}}(x,H'_Y)$;
\item (Shift) $c^{\frak{b}}(x,H_Y + s(t)) = c^{\frak{b}}(x,H_Y) +  \int_0^1 s(t)\, dt$;
\item (Subadditivity) for any $x,x' \in QH_{orb}(Y,\mathfrak{b})$ and $H_Y,H'_Y$, $c^{\frak{b}}(x \ast_{\frak{b}} x',H_Y \# H'_Y) \leq c^{\frak{b}}(x,H_Y) + c^{\frak{b}}(x',H'_Y)$, where $\ast_{\frak{b}}$ is the bulk-deformed orbifold quantum product in $QH_{orb}(Y,\mathfrak{b})$.
\end{enumerate}
\end{theo}

 In item (7) above, $H_Y \# H'_Y$ is referred to as the {\it composition} of the Hamiltonians $H, H'$ and is defined via the formula
\begin{align}\label{eq:comp}
H_Y \# H'_Y(t,x):= H_Y(t,x) + H'_Y(t, (\varphi^t_{H_Y})^{-1}(x)).
\end{align}
The Hamiltonian flow of $ H_Y \# H'_Y$ is the composition of the flows of $H_Y$ and $H'_Y$, i.e.\ $\varphi^t_{H_Y\# H'_Y} = \varphi^t_{H_Y} \circ \varphi^t_{H'_Y}$.

\begin{remark}
    Orbifold quantum cohomology is typically $\bQ$-graded with an appropriate Novikov field. Both it and $HF^*(H;\frak{b})$ admit a parity decomposition, compatible with the orbifold Floer product; we do not consider any grading beyond this parity decomposition.
    \end{remark}
\begin{remark}
    Even though  $\frak{b}$ in Theorem \ref{t:spectral_intro} can be a linear combination of fundamental classes of twisted sectors, below we take the simplest case in which the bulk is a multiple of a single sector, since that suffices for the applications in this paper.
\end{remark}

 \subsection{Symmetric products and Weyl laws}
Let $(M, \omega)$ be a closed symplectic manifold of dimension $2n$.  We say a sequence  $c_k: C^{\infty}([0,1]\times M) \rightarrow \bR$ of spectral invariants \emph{satisfies the Weyl law} if for every Hamiltonian $H \in C^{\infty}([0,1]\times M)$, the sequence $\frac 1k c_k(H)$  converges to the average of $H$, i.e.\
\begin{align}\label{eqn:Weyl-defn}
    \lim_k \frac 1k  c_k(H) = \frac{1}{{\mathrm{Vol}(M)}}\int_0^1 \int_M H_t \;  \omega^n \, dt,
\end{align}
where $\mathrm{Vol}(M):= \int_M \omega^n $.

In dimension $2$, every closed symplectic manifold $M$ admits spectral invariants $c_k$ satisfying  (a variation of) the above Weyl law; see \cite{CGHS20, EH21, CPZ21, CGHMSS,Buhovsky23}.  Applications of these two dimensional Weyl laws include the smooth closing lemma for area-preserving maps and non-simplicity of the group of area-preserving homeomorphisms of the disc.

A primary motivation for our work is the search for spectral invariants satisfying Weyl laws in higher dimensions, which would yield generalizations of the aforementioned applications to these settings; see Section \ref{sec:Weyl law applications} for a discussion of these applications.

The Floer cohomology of Hamiltonians on $M$ itself does not yield a rich enough source of spectral invariants for this to be very helpful.  Inspired by \cite{CGHMSS, CGHMSS22, Pol-Shel21} we use spectral invariants associated to symmetric product orbifolds. 

\begin{remark}
    Weyl laws have previously been proved using spectral invariats associated to periodic Floer homology (PFH) \cite{CGHS20, EH21, CPZ21}. In this theory, one counts possibly disconnected and higher genus holomorphic curves in the symplectic mapping torus of a Hamiltonian surface diffeomorphism. One feature of passing to the symmetric product orbifold is that Floer cylinders in the symmetric product can be interpreted, via the `tautological correspondence', as possibly higher genus and possibly disconnected curves mapping to the surface itself. On the other hand, the symmetric product theory depends subtly and importantly on the value of the bulk  deformation parameter, an analogue of which seems harder to identify in the PFH setting.  

    Another theory which counts higher genus curves, and is at least philosophically related to symmetric products, is the theory of higher dimensional Heegaard Floer homology and Heegaard Floer symplectic homology \cite{CHT, KY};  but they also do not explicitly introduce orbifolds.
\end{remark}

To obtain a sequence of spectral invariants on a closed symplectic manifold $(M, \omega_M)$ (with $[\omega_M] \in H^2(M,\mathbb{Q})$), we apply Theorem \ref{t:spectral_intro} in the  case where $X = M^k$ and $Y = \Sym^k(M)$.  In our later applications, it will also be important to allow the discrete group $\Pi = \Pi_k$ containing the valuations of the coefficients of the bulk class $\frak{b}$ to also depend on $k$, so we fix such a collection of groups $\Pi_k$.

\begin{remark}\label{rmk:choice of Pi}
    In the application, we will have a sequence of Lagrangian links $L_k \subset M$ of $k$ components, and we will take $\Pi_k$ to be a discrete group containing both $\langle \omega_M, H_2(M;\bZ)\rangle$ and $\langle \omega_M, H_2(M,L_k;\bZ)\rangle$. (The union of these discrete groups over all $k$ will typically no longer be discrete.)
\end{remark}

Via the canonical embedding 
\begin{align}\label{eq:can}
C^{\infty}([0,1]\times M) &\to C^{\infty}([0,1] \times \Sym^k(M))\\
H & \mapsto \Sym^k(H)_t(x_1,\dots,x_k):=\sum_{i=1}^k H_t(x_i).
\end{align}
(where the functions on the target are smooth in the orbifold sense) 
and making a choice of $\mathfrak{b}, x$ as in Theorem \ref{t:spectral_intro},
we obtain a spectral invariant $$c_k( - \, ; M):=c(x,\Sym^k(-)): C^{\infty}([0,1]\times M) \rightarrow \R $$
which we usually denote by $c_k$ if the choice of the manifold $M, \mathfrak{b},  x$ are clear from the context.  The spectral invariant $c_k$ satisfies a raft of familiar properties.

\begin{theo}\label{thm:spec-sym-prod}
The symmetric product spectral invariant $c_k$ satisfy the following properties.

\begin{enumerate}
   \item (Symplectic invariance) $c_k(x,H\circ \psi) = c_k(x,H)$ for any $\psi \in \Symp(M, \omega)$;
    \item (Spectrality) for non-degenerate  $H$, $c_k(x,H)$ lies in the action spectrum $\Spec_k(H)$;
\item (Hofer Lipschitz) for any $H, H'$
$$ k  \int_{0}^1 \min  (H_t - H'_t) dt \leq c_k(x,H) - c_k(x,H') \leq k \int_{0}^1 \max\,  (H_t - H'_t) dt;$$
\item (Monotonicity) if $H_t \leq H'_t$ then $c_k(x,H) \leq c_k(x,H')$;
\item (Homotopy invariance) if $H, H'$ are mean-normalized and determine the same point of the universal cover $\widetilde{\Ham}(M, \omega)$, then $c_k(x,H) = c_k(x,H')$;
\item (Shift) $c_k(x,H + s(t)) = c_k(x,H) +  k \int_0^1 s(t)\, dt$;
\item (Subadditivity) for any $x,x' \in QH_{orb}(\Sym^k(M),\mathfrak{b})$ and $H,H'$, $c_k(x \cdot x',H \# H') \leq c_k(x,H) + c_k(x',H')$.
\end{enumerate}
\end{theo}

The action spectrum $\Spec_k(H)$, from item 2 above, is the $\Spec(Y,\Sym^k(H))$ from item 2 of Theorem \ref{t:spectral_intro} with respect to $\Pi_k$. To spell it out in terms of $M$ and $H$, let $x:S^1 \to Y$ be a generator of $CF(H_Y)$ as in \eqref{eq:orbit} and $\bar{c}_x:D^2 \to Y$ be an orbifold capping disc.
Let $\hat{x}:\hat{S} \to X=M^k$ and $\hat{c}: \hat{\Sigma} \to M^k$ be the equivariant maps corresponding to $x$ and $\bar{c}_x$ respectively.
Let $\pi_1:M^k \to M$ be the projection map to the first factor.
The composition $\pi_1 \circ \hat{c}:\hat{\Sigma} \to M$ factors through a degree $(k-1)!$ map $\hat{\Sigma} \to \tilde{\Sigma}$.
Denote the resulting map $\tilde{\Sigma} \to M$ by $\tilde{c}$.
Let $\tilde{S}:=\partial \tilde{\Sigma}$  and $\tilde{x}:=\tilde{c}|_{\partial \tilde{\Sigma}}$, where we equip $\partial \tilde{\Sigma}$ with the opposite of the boundary orientation due to the minus sign in the definition of a capping disc.
Note that $\tilde{x}$ is a collection of integral period orbits of $X_H$ such that the sum of the periods is $k$.
By definition,
\[
\int_x \Sym^k(H) dt=\frac{1}{k!} \int_{\hat{x}} H^{\oplus k} dt=\frac{1}{(k-1)!} \int_{\pi_1 \circ \hat{x}} H dt=\int_{\tilde{x}} H dt
\]
where $dt$ on $\hat{S}$ is defined to be the pull-back of the $dt$ on $S^1$, and the $dt$ on $\tilde{S}$ is the unique $1$-form whose pull-back to $\hat{S}$ is $dt$. 
Similarly, 
\[
\int_{\bar{c}_x} \omega_Y=\frac{1}{k!} \int_{\hat{c}} \omega_X=\frac{1}{(k-1)!} \int_{\pi_1 \circ \hat{c}} \omega_M=\int_{\tilde{c}} \omega_M
\]
The term $k_{(g)}(\bar{c}_x)\cdot \val(\frak{v}_{(g)})$ lies in $\Pi_k$.
Therefore, $\Spec(Y,\Sym^k(H))=\Spec_k(H)$ is given by
\begin{align}\label{eqn:sepc-defn}
    \Spec_k(H) =\Pi_k + \left\{ \int_{\tilde{x}} H dt - \int_{\tilde{c}} \omega_M \right\}.
\end{align}
where the second term runs over all possible $(\tilde{x},\tilde{c})$ such that $\tilde{x}$ is a collection of integral period orbits of $X_H$ whose sum of periods is $k$ and $\tilde{c}$ is a capping surface of $\tilde{x}$ as above.
A standard argument implies that $\Spec_k(H)$ is a Lebesgue measure-zero subset of $\R$.

\medskip
 The most crucial of the properties in Theorem \ref{thm:spec-sym-prod} are items 2,3,7; the other items (in slightly weaker forms) follow from these. We discuss this within the proof of Proposition \ref{l:char}.

\subsection{Sufficient conditions for Weyl laws}
Consider a sequence $c_k$ of spectral invariants, satisfying items 2,3,7 of Theorem \ref{thm:spec-sym-prod}.  We show in Proposition \ref{l:char} that if the associated {\it homogenized} spectral invariants  satisfy a Weyl law for characteristic functions of sufficiently small balls, then they will satisfy the Weyl law  for all Hamiltonians on $M$. 

Nonetheless, establishing the Weyl law in any given case seems challenging.  Our following result gives a sufficient condition.
\begin{prop}[see Proposition \ref{prop:idempotent}]\label{p:closedWeylInto}
Let $\mathfrak{b}_k \in QH^*_{orb}(\Sym^k(M);\Lambda_{>0})$ be a sequence of classes as in \eqref{eq:bulkb} for $k \in \mathbb{N}$.
Suppose that there exist $e_k \in QH^*(\Sym^k(M);\mathfrak{b}_k)$ such that
\begin{enumerate}
\item $e_k$ is an indecomposable idempotent, and $e_k \cdot QH^*(\Sym^k(M);\mathfrak{b}_k) = e_k\cdot \Lambda$ has rank one over the ground field;
\item the valuations $\val(e_k)$ grow sub-linearly with $k$, meaning $\val(e_k)/k \to 0$ as $k\to \infty$.
\end{enumerate}
Then the associated spectral invariants 
\begin{align*}
c_k: C^{\infty}([0,1]\times M) &\to \bR \text{ given by } H\mapsto c^{\mathfrak{b}_k}(e_k,\Sym^k(H))
\end{align*}
satisfy the Weyl law.
\end{prop}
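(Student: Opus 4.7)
The plan is to invoke Proposition~\ref{l:char} to reduce to the homogenised Weyl law for characteristic functions of small balls, and then establish this via a ball-packing argument driven by the Entov--Polterovich symplectic-quasi-state property of the rank-one idempotent $e_k$. First verify that $c_k(H):=c^{\mathfrak b_k}(e_k, \mathrm{Sym}^k H)$ satisfies items (2), (3), (7) of Theorem~\ref{thm:spec-sym-prod}: spectrality and Hofer--Lipschitz descend directly from Theorem~\ref{t:spectral_intro}, while subadditivity uses $e_k \ast_{\mathfrak b_k} e_k = e_k$ together with property~(7) of Theorem~\ref{t:spectral_intro} and the identity $\mathrm{Sym}^k(H_1\# H_2) = \mathrm{Sym}^k(H_1)\# \mathrm{Sym}^k(H_2)$. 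Proposition~\ref{l:char} thus reduces the claim to showing that, for smoothed indicators $\chi_B$ of sufficiently small Darboux balls $B\subset M$,
\[
\tilde c_k(\chi_B)/k\;\longrightarrow\;\mathrm{Vol}(B)/\mathrm{Vol}(M)\qquad(k\to\infty),
\]
where $\tilde c_k(H):=\lim_n c_k(H^{\#n})/n$ is the homogenisation (well-defined via Fekete's lemma and subadditivity).

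\textbf{Poisson additivity.} The rank-one summand hypothesis $e_k\cdot QH^*(\mathrm{Sym}^k(M);\mathfrak b_k) = e_k\cdot \Lambda$ is precisely the Entov--Polterovich input making the homogenisation of $c^{\mathfrak b_k}(e_k,\cdot)$ a partial symplectic quasi-state on $\mathrm{Sym}^k(M)$. Pulled back to $M$ via $\mathrm{Sym}^k$, this gives \emph{Poisson additivity} for autonomous $F_1,F_2 \in C^\infty(M)$ with disjoint supports:
\[
\tilde c_k(F_1+F_2) \;=\; \tilde c_k(F_1)+\tilde c_k(F_2).
\]
The $\leq$ direction is subadditivity together with the identity $F_1\# F_2 = F_1+F_2$ valid for Poisson-commuting autonomous Hamiltonians; the $\geq$ direction is the Entov--Polterovich theorem. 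Furthermore, the shift property gives $\tilde c_k(\mathbf 1_M)=k$ (the $\val(e_k)$-correction washing out in homogenisation).

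\textbf{Packing argument.} Fix a Darboux ball $B$ of small volume $v$ and pack $N = \lfloor \mathrm{Vol}(M)/v\rfloor$ pairwise disjoint Hamiltonian images $B_i = \psi_i(B)$ in $M$, writing $R := M\setminus\bigsqcup_i B_i$ for the remainder (of volume $V'<v$). Poisson additivity applied to $\mathbf 1_M = \sum_i\chi_{B_i} + \chi_R$ combined with symplectic invariance and $\tilde c_k(\chi_R)\geq 0$ yields
\[
k \;=\; N\,\tilde c_k(\chi_B) + \tilde c_k(\chi_R), \qquad \tilde c_k(\chi_B)/k \;\leq\; 1/N \;\leq\; v/(\mathrm{Vol}(M)-v).
\]
For the reverse inequality, cover $R$ by an arbitrarily fine disjoint union of small Darboux balls and apply the just-derived upper bound piecewise; Poisson additivity then yields $\tilde c_k(\chi_R)/k \leq V'/\mathrm{Vol}(M)$ in the limit of fine cover, whence $\tilde c_k(\chi_B)/k \geq v/\mathrm{Vol}(M)$. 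The two bounds sandwich $\tilde c_k(\chi_B)/k$ uniformly in $k$, collapsing onto $\mathrm{Vol}(B)/\mathrm{Vol}(M)$ as $v\to 0$, supplying the hypothesis of Proposition~\ref{l:char}.

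\textbf{Main obstacle.} The crux is the Poisson additivity of $\tilde c_k$, specifically its $\geq$ direction, which is the symplectic-quasi-state theorem of Entov--Polterovich for rank-one idempotents; this must be transported from the classical quantum-cohomology setting to the bulk-deformed orbifold Floer framework of \cite{MSS}. The sublinear-valuation hypothesis $\val(e_k)/k\to 0$ enters inside Proposition~\ref{l:char} to control the quasimorphism defect $|c_k(H)-\tilde c_k(H)| = O(\val(e_k))$, so that the homogenised and un-homogenised Weyl laws share a common limit.
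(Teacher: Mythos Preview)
Your packing argument has a genuine gap in the lower-bound step. You claim that packing the remainder $R$ by small disjoint balls and applying your upper bound ``piecewise'' yields $\tilde c_k(\chi_R)/k \leq V'/\mathrm{Vol}(M)$, but this does not follow: a disjoint packing of $R$ gives $\chi_R \geq \sum_j \chi_{B'_j}$, so monotonicity and Poisson additivity produce a \emph{lower} bound on $\tilde c_k(\chi_R)$, not the upper bound you need (and a genuine cover, with overlaps, forfeits Poisson additivity). In fact the claimed inequality is false already for $k=1$ on $M=S^2$: the Entov--Polterovich quasi-state vanishes on characteristic functions of displaceable discs, so $\tilde c_1(\chi_{B_i})=0$ for each packed ball, and your own identity $1 = N\,\tilde c_1(\chi_B) + \tilde c_1(\chi_R)$ then forces $\tilde c_1(\chi_R)=1$, not $V'/\mathrm{Vol}(S^2)$.

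More structurally, every bound you derive for $\tilde c_k(\chi_B)/k$ comes from the quasi-state axioms alone and holds identically for every $k$; since the desired identity fails for $k=1$, no $k$-independent argument can succeed. The sublinear-valuation hypothesis $\val(e_k)/k\to 0$ plays no role in your argument --- Proposition~\ref{l:char} does not invoke it, and neither does your packing step. The paper's proof exploits this hypothesis in an essentially different way: one shows that $\mu_k:=c_k/k$, restricted to mean-normalised Hamiltonians, is a quasimorphism on $\widetilde{\Ham}(M)$ with defect bounded by $-\val(e_k)/k\to 0$; a general group-theoretic lemma then says that a pointwise-bounded sequence of quasimorphisms on a \emph{perfect} group with defects tending to zero must converge pointwise to zero. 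Perfectness of $\widetilde{\Ham}(M)$ is the key missing ingredient your approach never invokes.
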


In the case of the two-sphere $S^2$, we show that sequences $(\frak{b}_k,e_k)$ satisfying Proposition \ref{p:closedWeylInto}
exist by exploiting the interactions between Hamiltonian  and Lagrangian Floer homology; see Corollary \ref{cor:Weyl-S2}.  More precisely, we obtain the following result.

Let $U_{\Lambda} = \val^{-1}(1)$ denotes the unitary subgroup of the Novikov field.
\begin{prop}[see Proposition \ref{prop:links} and Corollary \ref{c:links}]
Let $L_k \subset M$ be a sequence of Lagrangian torus links of $k$ components, for $k=1,2,\ldots$. Choose the discrete groups $\Pi_k$ as in Remark \ref{rmk:choice of Pi}. Assume that there are bulk classes $\mathfrak{b}_k$ as above such that
\begin{enumerate}
\item The potential function $W^{\mathfrak{b}_k}$ of $L_k$ has an isolated non-degenerate critical point at $b_k \in H^1(\Sym(L_k);U_{\Lambda})$;
\item The valuation of the determinant of the Hessian of $W^{\mathfrak{b}_k}$ at the critical point $\frak{b}_k$ grows sublinearly with $k$.
\end{enumerate}
Then the image of the length zero open-closed map $\mathcal{OC}^0_{\Sym{L}_k}$ has rank one and contains  a unique idempotent $e_k \in QH^*(\Sym^k(M),\mathfrak{b}_k)$, and the  $\{e_k\}$ satisfy the hypotheses of Proposition \ref{p:closedWeylInto}.  Hence, the associated spectral invariants satisfy the Weyl law.
\end{prop}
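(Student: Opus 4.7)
The goal is to verify the two hypotheses of Proposition~\ref{p:closedWeylInto} for $e_k$, after which the Weyl law follows immediately.  The strategy is the standard Fukaya-theoretic one: produce the idempotent from the non-degenerate critical point of the potential via the open-closed map, then estimate its valuation via the Hessian.

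First, I would use the non-degenerate critical point $b_k \in H^1(\Sym(L_k);U_\Lambda)$ to upgrade $\Sym(L_k)$, equipped with the bulk class $\mathfrak{b}_k$ and the local system $b_k$, to an object of the bulk-deformed (orbifold) Fukaya category of $\Sym^k(M)$.  A standard torus-Floer calculation (Cho--Oh, Fukaya--Oh--Ohta--Ono) identifies the bulk- and local-system-deformed self-Floer algebra $HF^*(\Sym L_k, b_k; \mathfrak{b}_k)$ with the Clifford algebra of the Hessian form $\Hess W^{\mathfrak{b}_k}|_{b_k}$.  Non-degeneracy of the Hessian implies, after extending scalars if necessary, that this Clifford algebra is Morita equivalent to $\Lambda$, so the Hochschild homology of the summand it generates is concentrated in degree zero and is free of rank one over $\Lambda$, generated by the unit $1_{b_k}$.

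By Assumption~\ref{a:OCCO} (following Cho--Poddar and the manifold story of Abouzaid--FOOO), $\mathcal{OC}^0_{\Sym L_k}$ is a unital ring homomorphism from Hochschild homology to $QH^*_{orb}(\Sym^k(M);\mathfrak{b}_k)$, so its restriction to the summand above has rank one and is generated by the non-zero image of $1_{b_k}$.  The unique non-zero idempotent in this rank-one image is the desired $e_k$, and $e_k \cdot QH^*_{orb} = e_k \cdot \Lambda$ has rank one; this verifies hypothesis~(1) of Proposition~\ref{p:closedWeylInto}.

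For hypothesis~(2) I would invoke the standard formula identifying the rescaling that turns $\mathcal{OC}^0(1_{b_k})$ into the idempotent $e_k$ with the Hessian determinant: concretely, computing $\mathcal{OC}^0(1_{b_k}) \ast \mathcal{OC}^0(1_{b_k})$ yields $\bigl(\det \Hess W^{\mathfrak{b}_k}|_{b_k}\bigr)\,\mathcal{OC}^0(1_{b_k})$ up to an error of bounded valuation independent of $k$, so $\val(e_k) = -\val\bigl(\det \Hess W^{\mathfrak{b}_k}|_{b_k}\bigr) + O(1)$.  Hypothesis~(2) of the present proposition then gives $\val(e_k)/k \to 0$, and Proposition~\ref{p:closedWeylInto} yields the Weyl law.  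The main obstacle is this valuation identity: establishing it requires identifying the leading Floer contributions to $\mathcal{OC}^0(1_{b_k})$ in the spirit of the classical open-closed cancellation arguments of FOOO, and controlling the orbifold disc contributions cancelled by the bulk class $\mathfrak{b}_k$.  Once that identification is secured, the remaining application of Proposition~\ref{p:closedWeylInto} is purely formal.
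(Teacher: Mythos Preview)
Your overall strategy matches the paper's: produce the idempotent via the open-closed map from the Clifford-algebra Floer cohomology of $(\Sym L_k, b_k)$, then bound its valuation using the Hessian determinant.  However, two concrete errors would derail the argument as written.

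First, $\mathcal{OC}^0$ is \emph{not} a ring homomorphism; it is a $QH^*_{orb}$-module map (the module structure on the source being induced by $\mathcal{CO}^0$).  It is $\mathcal{CO}^0$ that is the unital algebra map.  Second, and more seriously, the class you feed into $\mathcal{OC}^0$ should be the volume class $vol \in H^n(\Sym L_k)$, not the unit $1_{b_k}=e_L$.  Indeed, the adjunction $\langle \mathcal{OC}^0(a), y\rangle_Y = \langle a, \mathcal{CO}^0(y)\rangle_L$ together with $\mathcal{CO}^0(y)\in \Lambda\cdot e_L$ forces $\mathcal{OC}^0(a)=0$ for every $a$ of degree $<n$; in particular $\mathcal{OC}^0(e_L)=0$, so your candidate generator vanishes.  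The paper instead sets $e_k := \mathcal{OC}^0(vol)/Z$ with $Z = \langle \mathcal{OC}^0(vol),\mathcal{OC}^0(vol)\rangle_Y$, checks $Z\neq 0$ via the Cardy relation and Proposition~\ref{p:Z}, and verifies idempotency using the module property $\mathcal{OC}^0(\mathcal{CO}^0(y)\cdot a)=y\cdot \mathcal{OC}^0(a)$.

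Once this is fixed, the valuation estimate is much cleaner than you suggest: since $\mathcal{OC}^0$ has non-negative valuation (Assumption~\ref{a:OCCO}(3)) and $e_k = \mathcal{OC}^0(vol)/Z$, one gets $\val(e_k) \ge -\val(Z)$ immediately, with no ``$O(1)$ error'' and no need to analyse individual disc contributions.  Sublinear growth of $\val(Z_k)$ then gives $\val(e_k)/k \to 0$ directly.
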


Floer cohomology is always a deformation of ordinary cohomology. We restrict to `torus links', ones for which every component of $L_k$ (and hence also $\Sym(L_k)$) is a torus, with exterior algebra cohomology. 
A key point of the above result is that the associated symmetric product link has Floer cohomology a Clifford algebra. For general reasons (coming from the Calabi-Yau structure on Floer cohomology) this yields estimates on the valuations of quantum cohomology idempotents associated to non-degenerate critical points of the disc potential.

This pushes the eventual work onto the existence of a sequence of Lagrangian links $L_k \subset M$ of $k$ components, for which the orbifold Lagrangian Floer homology of the symmetric product links $\Sym(L_k)$ satisfies the above specific properties.  These in particular force the links to be `dense', which accounts for how they can probe the small-scale geometry of the manifold (essential for having a Weyl law).

On $S^2$ we prove that links of parallel circles satisfy the required conditions by adapting the arguments in \cite{MS19, Pol-Shel21} (see Lemma \ref{l:parallelLink}). By contrast, links composed of unions of toric fibres in $(S^2\times S^2, \omega + t\cdot\omega)$ never satisfy our conditions; more generally one can show that links which are products of links in the two factors and have non-zero orbifold Floer cohomology never become dense.

In general we need a larger supply of constructions of symmetric product links which might be Floer-theoretically unobstructed.

\begin{remark}
    If $L_k$ is a Floer-theoretically non-trivial link of $k$ components, the arguments of \cite{Pol-Shel21} show that the individual components have `finite packing' (the components admit at most $k$ pairwise disjoint Hamiltonian translates). By contrast, close cousins of the known links on $S^2 \times S^2$ admit infinitely many pairwise disjoint Hamiltonian translates \cite{Brendel-Kim}. This underscores the delicacy in finding suitable links.
\end{remark}

\begin{remark}
   Proposition \ref{prop:links} provides a criterion under which  Lagrangian/Hamiltonian spectral invariants become quasimorphisms. Applying the proposition to Lagrangian links in toric $4$-manifolds, such as those considered in \cite{MS19, Pol-Shel21}, one obtains new families of quasimorphisms on (the universal cover of) Hamiltonian diffeomorphisms.  This implies, in particular, that the invariants $c_{k, B}, \mu_{k,B}$, introduced in \cite{Pol-Shel21}, are quasimorphisms. See Remark \ref{rem:quasi-morphisms}.
\end{remark}

\subsection{Structure of the paper}\label{sec:structure}
Section \ref{Sec:template} discusses Weyl laws and their applications, such as the smooth closing lemma, assuming the existence of bulk-deformed orbifold Hamiltonian Floer theory with the `usual' properties. The companion paper \cite{MSS} gives a construction of that theory. The final sections outline our requirements from orbifold Lagrangian Floer theory. Most of these have been established in prior work of Cho and Poddar \cite{Cho-Poddar}. The one or two missing ingredients concern the open-closed map and Calabi-Yau structures, so necessarily entwining Lagrangian and Hamiltonian Floer cohomologies. However, \cite{Cho-Poddar} worked with the `Kuranishi atlas' approach to Floer theory, rather than the global charts used in \cite{MSS}. Given that, we have chosen to axiomatise the one or two remaining requirements of the orbifold Lagrangian theory that we require: these are summarised and then deployed in Section \ref{Sec:Lag Floer}.
\medskip

\noindent \emph{\textbf{Acknowledgments.}} 
We are grateful to Dan Cristofaro-Gardiner and Vincent Humili\`ere for our previous collaboration on Lagrangian links and dynamics, which motivated this work.
We thank Michael Hutchings and the UC Berkeley Department of Mathematics for their warm hospitality in December 2022, when this project was initiated.

C.Y.M is partially supported by the Royal Society University Research Fellowship.
I.S. is partially supported by UKRI Frontier Research Grant EP/X030660/1 (in lieu of an ERC Advanced Grant).
S.S. is partially supported by ERC Starting Grant number 851701.

\section{Weyl laws, idempotents, links and periodic orbits}\label{Sec:template}
We introduce here some of our notations and conventions. 

Let $(M, \omega)$ be a closed and connected symplectic manifold of dimension $2n$.   
Let $H : [0,1] \times M \to \R$ be a time-dependent Hamiltonian.  The Hamiltonian vector field $X_H$ of $H$ is defined by the identity
$$\omega(X_H, \cdot) :=-dH.$$

The Hamiltonian flow of $H$, that is the flow (or isotopy) generated by $X_H$,  is denoted by $\varphi_H^t$.  Its time-one map is written as  $\varphi_H$. The collection of all such time-one maps forms the group of Hamiltonian diffeomorphisms $\Ham(M,\omega)$ of $(M, \omega)$, which is a subgroup of the symplectomorphisms $\Symp(M, \omega)$.

There is a surjection
\begin{align*}
C^{\infty}([0,1]\times M;\bR) \longrightarrow \widetilde{\mathrm{Ham}}(M) 
\end{align*}
to the universal cover of the Hamiltonian group, which takes a time-dependent function $H_t$ to the homotopy class rel endpoints of the path $t \mapsto \varphi^t_H$, where $t \in [0,1]$; we denote the image of $H$ under this mapping by $\tilde{\varphi}_H$.  This map  remains a surjection when restricted to the subspace of mean-normalized Hamiltonians, i.e.\ $H_t$ such that $\int_M H_t \, \omega^n = 0$.

\subsection{Weyl laws and their applications} \label{sec:Weyl law applications}

\emph{In this section we formulate the Weyl law, and note that  the Weyl law implies both the smooth closing lemma and the non-simplicity of the group of Hamiltonian homeomorphisms.}

Let $(M,\omega)$ be a compact symplectic manifold of real dimension $2n$. Fix a discrete subgroup $\Pi_k$ for each $k=1,2,\dots$ to define the associated $\Spec_k(H)$ (see Equation \eqref{eqn:sepc-defn}).

A function $c: C^{\infty}([0,1]\times M,\bR) \rightarrow \bR$ is \emph{spectral} if it satisfies 
\begin{enumerate}
\item $c$ is continuous in the $C^\infty$ topology;
\item $c(H)$ belongs to the Lebesgue measure-zero subset $\cup_k \Spec_k(H)$.
\end{enumerate}

We say that a spectral function $c$ is {\em sub-additive} if, in addition to the above, it satisfies the triangle inequality
\begin{enumerate}
\item[3.] $c(H \# G) \leq c(H) + c(G)$.
\end{enumerate}

If $c$ is sub-additive, then, following Entov-Polterovich \cite{Entov-Polterovich}, we define the homogenized spectral function $\zeta : C^\infty([0,1] \times M) \rightarrow \R$ via
$$\zeta(H) := \lim_{m \to \infty} \frac{c(H^m)}{m},$$

where $H^m$ stands for the $m$-fold composition $H\# \ldots \# H$.  Sub-additivity guarantees the existence of the above limit.  Note that $\zeta$ is not necessarily spectral.

The prototypical example of a spectral function is the Floer theoretic spectral invariant associated to a choice of a quantum cohomology class $e \in QH^*(M)$, or more generally a class $e\in QH^*(M;\mathfrak{b})$ for some bulk deformation parameter $\mathfrak{b} \in H^{ev}(M;\Lambda_{>0})$. If the quantum cohomology class $e$ is an idempotent, then, the spectral invariant is sub-additive and so one can define its homogenization.  All of the spectral functions, and their homogenizations, which appear in this paper arise from idempotents in (bulk deformed) $QH$.

\medskip

    We say a sequence $c_k: C^{\infty}([0,1]\times M) \rightarrow \bR$ of spectral functions \emph{satisfies the Weyl law} if, the sequence $\frac 1k c_k(H)$  converges to the average of $H$, as in \eqref{eqn:Weyl law}.
   Similarly, we say a sequence $\zeta_k$ of homogenized spectral functions satisfies the Weyl law if for every Hamiltonian $H \in C^{\infty}([0,1]\times M)$, the sequence $\frac 1k\zeta_k(H)$  converges to the average of $H$.

It seems that homogenized spectral invariants have properties which are better suited for proving Weyl laws; see, for example, Proposition \ref{l:char}. Additionally, for applications, a Weyl law for $\zeta_k$ is just as effective as one for $c_k$.

\subsection*{Smooth closing lemmas \& non-simplicity}
We state here two potential applications of Weyl laws, which serve as motivations for our work. 

\subsubsection*{Smooth closing lemmas}
We say that \emph{the $C^{\infty}$-closing lemma holds} for Hamiltonian symplectomorphisms of  $(M,\omega)$ if, for any $\phi \in \mathrm{Ham}(M)$, any open $U\subset M$,  and any $C^{\infty}$-open neighborhood $\phi \in V_{\phi} \subset \mathrm{Ham}(M)$, there is a diffeomorphism $\phi' \in V_{\phi}$ which has a periodic point in $U$.  This is one of the most significant open problems in dynamical systems.  It has been resolved in the particular case of area-preserving surface diffeomorphisms \cite{airie, EH21, CPZ21} and 3D Reeb flows \cite{Irie}, as a corollary of Weyl laws in Periodic Floer and Embedded Contact homologies.

The smooth closing lemma remains wide open in higher dimensions, although there has been some  recent progress in very specific settings \cite{Cineli-Seyfaddini, CDPT, Xue}.  The following lemma explains how it can be deduced from Weyl laws.
\begin{lemma}\label{l:closing}
    If $M$ admits either a sequence of spectral functions $c_k$, or a sequence of homogenized spectral functions $\zeta_k$, which satisfy the Weyl law, then the $C^{\infty}$-closing lemma holds for Hamiltonian symplectomorphisms of $M$.
\end{lemma}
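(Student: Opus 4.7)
The plan is to argue by contradiction: suppose the closing lemma fails at some triple $(\phi, U, V_\phi)$. After a time-reparametrization of its generating Hamiltonian I may assume $\phi = \varphi_{\tilde H}^1$ where $\tilde H(t, \cdot) \equiv 0$ on $[0, 1/2]$. Fix an autonomous $F: M \to \R_{\geq 0}$ with $\mathrm{supp}(F) \subset U$ and $\int_M F\, \omega^n > 0$, and a time bump $\chi: [0,1] \to \R_{\geq 0}$ supported in $[0, \epsilon]$ with $\int_0^1 \chi\, dt > 0$. Set $G(t,x) := \chi(t) F(x)$ and $H_s := \tilde H + sG$. For $\delta$ sufficiently small, each time-1 map $\varphi_{H_s}^1 = \phi \circ \varphi_F^{s \int \chi}$ ($s \in [0, \delta]$) lies in $V_\phi$, hence by hypothesis has no periodic points in $U$.

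The key step is to show $\Spec_k(H_s) = \Spec_k(H_0)$ as subsets of $\R$ for every $k$ and every $s \in [0, \delta]$. On the window $[0, \epsilon]$ the flow of $H_s$ equals the autonomous flow $\varphi_F^{s \int_0^t \chi}$, which is the identity off $\mathrm{supp}(F) \subset U$. Any integer-period closed orbit of $X_{H_s}$ has basepoint and iterates at periodic points of $\varphi_{H_s}^1$, all lying in $M \setminus U$ by hypothesis; on each time-interval $[\ell, \ell+\epsilon]$ (integer $\ell$) the flow reduces to $\varphi_F$, which fixes these points, so the orbit remains outside $U$ during these intervals, i.e.\ it avoids the space-time support of $G$. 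Consequently this orbit is simultaneously an orbit of $X_{H_0}$ with identical action integral, and the description \eqref{eqn:sepc-defn} of the spectrum yields the claimed equality.

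By Hofer Lipschitz (item 3 of Theorem \ref{thm:spec-sym-prod}), $s \mapsto c_k(H_s)$ is continuous, and by spectrality it takes values in the measure-zero set $\Spec_k(H_0) \subset \R$. A continuous real-valued function on an interval valued in a set of Lebesgue measure zero must be constant (its connected image cannot contain a non-trivial interval), so $c_k(H_s) \equiv c_k(H_0)$ on $[0, \delta]$. Dividing by $k$ and applying the Weyl law forces $\int_0^1 \int_M H_\delta\, \omega^n\, dt = \int_0^1 \int_M H_0\, \omega^n\, dt$, contradicting $\int_0^1\int_M (H_\delta - H_0)\,\omega^n\, dt = \delta \bigl(\int \chi\, dt\bigr)\bigl(\int_M F\, \omega^n\bigr) > 0$. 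For the $\zeta_k$ case, the tautological correspondence identifies integer-period orbits of $X_{H^{\#m}}$ of period $j$ with those of $X_H$ of period $jm$ with matching actions, giving $\Spec_k(H^{\#m}) \subseteq \Spec_{km}(H)$. Applying the $\Spec_k$ equality with $k$ replaced by $km$ shows that $c_k(H_s^{\#m})$ takes values in the $s$-independent measure-zero set $\Spec_{km}(H_0)$, so is constant in $s$ by continuity; hence $\zeta_k(H_s) = \lim_m c_k(H_s^{\#m})/m$ is constant in $s$, and the Weyl law for $\zeta_k$ yields the same contradiction.

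The main technical obstacle is translating the contradiction hypothesis into orbit-avoidance: for a time-dependent Hamiltonian the flow segment between iterated periodic points could a priori wander into $U$ even if the basepoints do not. Localizing $G$ to a short time-window on which $\tilde H$ is trivial circumvents this, since during that window the perturbed flow reduces to the autonomous $\varphi_F$-flow, which is the identity off $U$ and therefore cannot carry a point of $M \setminus U$ into $U$.
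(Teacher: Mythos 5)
Your proof is correct and follows essentially the same route as the paper's: perturb by a non-negative Hamiltonian supported in $U$, observe that if no periodic orbit of the perturbed maps enters $U$ then the action spectra are unchanged, use continuity plus spectrality to freeze $c_k$ (and hence $\zeta_k$) along the deformation, and contradict the Weyl law — with your explicit time-localization of the perturbation serving to make rigorous the orbit-avoidance step that the paper's proof (phrased via Irie's ``strong closing property'') only asserts. One small caveat: the inclusion $\Spec_k(H^{\# m})\subseteq \Spec_{km}(H)$ you invoke in the $\zeta_k$ case is not guaranteed, since these sets are translates by the possibly incomparable discrete groups $\Pi_k$ and $\Pi_{km}$; the detour is unnecessary, however, because the same avoidance argument yields $\Spec_k(H_s^{\# m})=\Spec_k(H_0^{\# m})$ directly, which is all that is needed.
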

  The proof of this lemma follows Irie's argument from \cite{Irie}.
\begin{proof}
    Following Irie \cite{Irie24} (see also \cite{Cineli-Seyfaddini}), we say that a Hamiltonian diffeomorphism $\psi$ of $M$ satisfies the \emph{strong closing property} if for every non-zero time-dependent Hamiltonian function $G \geq 0$, there is some $s \in [0,1]$ for which $\varphi_G^s \circ \psi$ admits a periodic orbit which meets the support of $G$.  On the one hand, it is straightforward to see (by taking $G$ supported in a small ball) that if every $\psi$ satisfies strong closing, then $M$ satisfies the conclusions of the smooth closing lemma. On the other hand, the strong closing property for $\psi$ follows from the Weyl law. Indeed, take a Hamiltonian $H$ such that $\psi = \varphi_H$.  If the periodic orbits of $\varphi_G^s \circ \psi$ remain away from the support of $G$ for all $s \in [0,1]$, then for all $k, m \in \bN$ the action spectrum  $\Spec_k( (sG \# H)^m) $ remains independent of $s$, coinciding with $\Spec_k( (sG \# H)^m) $. For fixed $k, m$, the spectrality of the function $s \mapsto c_k((sG \# H)^m)$ forces it to be constant, since it is a continuous function taking values in the measure-zero set $\Spec_k(H^m)$.  Since this holds for all $k, m$, we conclude that the function $s \mapsto \zeta_k((sG \# H)^m)$ is also independent of $s$.  Hence, we have
    \begin{align*}
        c_k(G \# H) = c_k(H) \text{ and } \zeta_k(G \# H) = \zeta_k(H)
    \end{align*}
    for all $k \in \bN$.
    
    Since $G \geq 0$, if $G \neq 0$ then the average of $G\# H$ differs from the average of $H$, yielding a contradiction to the Weyl law for either of $c_k$ or $\zeta_k$.
\end{proof}

\subsubsection*{Non-simplicity of symplectic homeomorphisms}
Fathi proved in \cite{fathi} that the group of compactly supported volume-preserving homeomorphisms of the $n$-dimensional ball, for $n \geq 3$, is simple. In contrast, the group of compactly supported area-preserving homeomorphisms of the two-dimensional disc is not simple, as shown in \cite{CGHS20} (sometimes referred to as the simplicity conjecture \cite[Problem 42]{McDuff-Salamon}). Generalizations to other surfaces were treated in \cite{CGHMSS}.  

From a symplectic perspective, the natural higher-dimensional analogue of area-preserving homeomorphisms is the class of {\bf symplectic homeomorphisms}, defined as homeomorphisms of a symplectic manifold that arise as $C^0$ limits of symplectomorphisms. In two dimensions, these coincide with area-preserving homeomorphisms. Given the non-simplicity of the latter for the disc, a natural question arises: is the group of compactly supported symplectic homeomorphisms of the $2n$-dimensional ball simple?  

For area-preserving homeomorphisms, Weyl laws played a crucial in proving non-simplicity. Extending such results to higher dimensions faces the obstacle of lacking higher-dimensional Weyl laws. However, most, but not all, other aspects of the proof, rooted in $C^0$ symplectic geometry, do generalize. For instance, if Weyl laws for the spectral invariants $c_k$ from Hamiltonian Floer homology on $\Sym^k$ hold on a closed symplectic $(M, \omega)$, then the symplectic homeomorphisms of the $2n$-dimensional ball would also be non-simple. The reason for this is as follows. By embedding the ball into $(M, \omega)$ one would obtain spectral invariants $$c_k: \Ham(B^{2n}, \omega_0) \rightarrow \R.$$
 One can show that, on the one hand, the $c_k$ are continuous with respect to the $C^0$ topology and, on the other hand, their asymptotics recover the Calabi homomorphism which is defined as follows:
\begin{align*}
    \Cal :\Ham(B^{2n}, \omega_0) \rightarrow \R \\
    \varphi_H \mapsto \int_0^1 \int_{B^{2n}} H \omega^n \; dt.
\end{align*} 

Arguing as in \cite{CGHS20, CGHMSS}, one can use the above to construct an extension of the Calabi homomorphism to the group $\mathrm{Hameo}(B^{2n}, \omega_0)$; this is the normal subgroup of symplectic homeomorphisms, introduced by Müller and Oh \cite{muller-oh}, consisting of those symplectic homeomorphisms $\varphi$ which can be written as the $C^0$ limit of a sequence $\varphi_i \in \Ham(B^{2n}, \omega_0)$ such that the sequence $\varphi_i$ is Cauchy in Hofer's distance.  Hence, $\mathrm{Hameo}(B^{2n}, \omega_0)$ is not simple, as the kernel of the extension of Calabi is a proper normal subgroup. From this, we deduce that symplectic homeomorphisms must be non-simple, as either $\mathrm{Hameo}$ or the kernel of the Calabi homomorphism forms a proper normal subgroup.

\medskip
We therefore focus attention on Weyl laws.

\subsection{Weyl laws \& characteristic functions}\label{sec:Weyl-char}

\emph{In this section we show that to prove the Weyl law, it suffices to prove that it holds for characteristic functions of small sets.}

Consider a sequence of sub-additive spectral invariants $c_k: C^\infty([0,1] \times M) \rightarrow \R$ which satisfy the following version of spectrality
\begin{equation}\label{eqn:spectrality-symprod}
 c_k(H) \in \Spec_k(H).
\end{equation}

The spectral invariants arising from Hamiltonian Floer homology on $\Sym^k(M)$ have this property; see Theorem \ref{thm:spec-sym-prod}.  The main goal of this section is to show that, in the case of such spectral invariants, to establish the Weyl law it suffices to prove it for a very restricted class of functions, namely characteristic functions of sufficiently small balls.

Consider a Darboux coordinate chart $\mathcal U \subset M$ with coordinates $x_1, y_1, \ldots, x_n, y_n$.  Let $B_a \subset \mathcal U$ be the ball of radius $a$ in these coordinates and denote by $\chi_a$ the characteristic function of $B_a$.   As we explain below, since $c_k$ and $\zeta_k$ are monotone, one can define $c_k(\chi_a)$ and $\zeta_k(\chi_a)$, for all $k$ and $a$.

\begin{prop}\label{l:char}
    Suppose that for all sufficiently small $a > 0$, we have $$\lim_k \frac{1}{k} \zeta_k(\pm \chi_a) = \pm \frac{\mathrm{Vol}(B_a)}{\mathrm{Vol}(M)}.$$
     Then,  $$\lim_k \frac{1}{k} \zeta_k(F) = \frac{1} 
 {\mathrm{Vol}(M)} \int_0^1 \int_M F \, \omega^n$$ for all Hamiltonians $F$.
\end{prop}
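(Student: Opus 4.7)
The strategy is to sandwich an arbitrary Hamiltonian $F$ between step functions built from characteristic functions of small, pairwise disjoint Darboux balls, and then pin down the asymptotics of $\zeta_k$ on those step functions using the two-sided hypothesis on $\pm\chi_a$. As a preliminary I would interpret $\zeta_k(\chi_a)$ as $\inf\{\zeta_k(H) : H \in C^\infty,\, H \geq \chi_a\}$ (well-defined by monotonicity and Hofer-Lipschitz); combine the symplectic invariance of $\zeta_k$ with Darboux's theorem to upgrade the hypothesis to the statement $\tfrac{1}{k}\zeta_k(\pm\chi_B) \to \pm\mathrm{Vol}(B)/\mathrm{Vol}(M)$ for \emph{any} sufficiently small Darboux ball $B \subset M$; and record two further consequences of the stated properties. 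First, positive homogeneity $\zeta_k(\lambda H) = \lambda \zeta_k(H)$ for $\lambda > 0$, since for autonomous $H$ the identity $H^m = mH$ holds (the autonomous flow preserves $H$), so $c_k((\lambda H)^m) = c_k(m\lambda H)$ and the definition of $\zeta_k$ gives the scaling. Second, the inequality $\zeta_k(-H) \geq -\zeta_k(H)$ for autonomous $H$, because $H\#(-H) = 0$ forces $\zeta_k(H) + \zeta_k(-H) \geq \zeta_k(0) = 0$ via subadditivity.

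Assuming $F$ is autonomous (the time-dependent case is at the end), for each $\epsilon > 0$ I would choose pairwise disjoint small Darboux balls $B_1, \ldots, B_N$ covering $M$ up to a set of volume $<\epsilon$ and on each of which $F$ has oscillation $<\epsilon$. Step functions $F^\pm = C^\pm + \sum_i c_i^\pm \chi_{B_i}$ then sandwich $F^- \leq F \leq F^+$ with $\int_M (F^\pm - F)\,\omega^n$ of order $\epsilon$, so by monotonicity $\zeta_k(F^-) \leq \zeta_k(F) \leq \zeta_k(F^+)$. For the upper bound, the Hamiltonians $c_i^+\chi_{B_i}$ have pairwise disjoint supports, hence commuting flows, and so their sum coincides with their composition in $\widetilde{\mathrm{Ham}}$; subadditivity and the shift property give $\zeta_k(F^+) \leq \sum_i \zeta_k(c_i^+\chi_{B_i}) + kC^+$. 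Positive homogeneity and the hypothesis (applied with the sign matching $c_i^+$) yield $\tfrac{1}{k}\zeta_k(c_i^+\chi_{B_i}) \to c_i^+ \,\mathrm{Vol}(B_i)/\mathrm{Vol}(M)$, and the sum is a Riemann sum converging to $\tfrac{1}{\mathrm{Vol}(M)}\int_M F^+\,\omega^n$, which is within $O(\epsilon)$ of $\tfrac{1}{\mathrm{Vol}(M)}\int_M F\,\omega^n$. Letting $\epsilon \to 0$ produces $\limsup_k \tfrac{1}{k}\zeta_k(F) \leq \tfrac{1}{\mathrm{Vol}(M)}\int_M F\,\omega^n$.

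The matching lower bound is the heart of the proof and the main obstacle, since subadditivity by itself only provides upper bounds on $\zeta_k$. Here the $-\chi_a$ half of the hypothesis becomes essential: combining $\zeta_k(-H) \geq -\zeta_k(H)$ with the two-sided hypothesis forces the asymptotic oddness $\zeta_k(-c\chi_B) = -\zeta_k(c\chi_B) + o(k)$ for the relevant scaled characteristic functions. Then $\zeta_k(F^-) = \zeta_k(F^- - C^-) + kC^- \geq -\zeta_k(-(F^- - C^-)) + kC^-$, and subadditivity applied to the disjoint pieces of $-(F^- - C^-)$ gives $\zeta_k(F^-) \geq -\sum_i \zeta_k(-c_i^-\chi_{B_i}) + kC^-$. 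Evaluating the right-hand side asymptotically via the hypothesis and positive homogeneity produces $k \cdot \tfrac{1}{\mathrm{Vol}(M)}\int_M F^-\,\omega^n + o(k)$, so letting $\epsilon \to 0$ pinches $\tfrac{1}{k}\zeta_k(F)$ to $\tfrac{1}{\mathrm{Vol}(M)}\int_M F\,\omega^n$. Finally, for time-dependent $F$ one approximates $F(t,x)$ by functions piecewise constant in $t$ and of the disjoint-ball form in $x$: because the spatial supports remain within the same fixed collection $\{B_i\}$ independently of $t$, disjoint-support subadditivity still applies, and the Hofer-Lipschitz property together with the hypothesis controls each time slice. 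The recurring theme---and the key difficulty---is the asymmetry between upper and lower bounds for $\zeta_k$, which the two-sided hypothesis on $\pm\chi_a$ is precisely designed to overcome.
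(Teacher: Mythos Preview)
Your autonomous argument is correct and genuinely different from the paper's. The paper's central device is a Sikorav-type equality: for $G=\sum m_i\,\chi_a\circ\psi_i^{-1}$ with the $\psi_i(B_a)$ disjoint, one shows that the iterates $\tilde\varphi_{G}^n$ and $\tilde\varphi_{G'}^n$ (with $G'=\sum m_i\,\chi_a$, all balls superimposed) stay at bounded Hofer distance in $\widetilde{\mathrm{Ham}}$ independently of $n$, so $\zeta_k(G)=\zeta_k(G')=(\sum m_i)\zeta_k(\chi_a)$ exactly. After shifting $F$ to be positive this gives both bounds directly from the one-sided hypothesis; the $-\chi_a$ hypothesis is then only used for the upper bound in a completely symmetric way. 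Your route avoids any Hofer-geometry argument: you use subadditivity on disjoint supports to get one inequality, and the flip $\zeta_k(H)\ge-\zeta_k(-H)$ (valid since $H\#(-H)=0$ for autonomous $H$ and $\zeta_k(0)=0$) to manufacture the other. This is more elementary and in fact, with $C^+=\max F$ and $C^-=\min F$, ends up using only the $-\chi_a$ hypothesis.

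The time-dependent reduction, however, is not handled by what you wrote. After the spatial decomposition you are left with pieces $G_i(t,x)=c_i(t)\chi_{B_i}(x)$; invoking ``Hofer--Lipschitz to control each time slice'' gives an error of order $k\cdot\mathrm{osc}_t\,c_i(t)$, which is $O(k)$, not $o(k)$. What actually pins down $\zeta_k(G_i)$ is that the radial flow of $G_i$ agrees in $\widetilde{\mathrm{Ham}}$ (and in average) with that of the autonomous $(\int_0^1 c_i)\,\chi_{B_i}$, so homotopy invariance plus Shift give $\zeta_k(G_i)=\zeta_k\bigl((\int c_i)\chi_{B_i}\bigr)$. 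The paper handles this step differently, approximating $F$ by a concatenation $F_1\diamond\cdots\diamond F_p$ of autonomous Hamiltonians and then proving (via commutation and the Sikorav argument again) that $\zeta_k(G_1\diamond\cdots\diamond G_p)=\zeta_k(G_1+\cdots+G_p)$. Either route works, but your sketch as written has a gap here.
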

The rest of this section is dedicated to the proof of this proposition.
\begin{proof}
We have a sequence $c_k$ of spectral functions which satisfy, in addition to the three properties listed at the beginning of this section, the form of spectrality in \eqref{eqn:spectrality-symprod}.  Arguing as in \cite[Section 3]{HLS1}, one can show that the $c_k$ must also satisfy the following additional properties.
   \begin{itemize}
       \item Hofer Lipschitz: for any $H, H'$
$$  k\int_{0}^1 \min  (H_t - H'_t) dt \leq c_k(H) - c_k(H') \leq  k \int_{0}^1 \max\,  (H_t - H'_t) dt;$$
       \item Symplectic invariance: $c_k(H \circ \psi) = c_k(H)$ if $\psi \in \mathrm{Symp}_0(M, \omega)$, where $\mathrm{Symp}_0$ denotes the connected component of $\mathrm{Id}$ in $\mathrm{Symp}$;
       \item Monotonicity: if $H_t \leq H'_t$ then $c(x,H) \leq c(x,H')$;
       \item Homotopy invariance: if $H, H'$ are mean-normalized and determine the same point of the universal cover $\widetilde{\Ham}(M, \omega)$, then $c_k(H) = c_k(H')$;
        \item Shift: $c_k(H + s(t)) = c_k(H) + k \int_0^1 s(t) dt$.
   \end{itemize}

  We will not prove the above as it can be proven by adapting the arguments in \cite[Section 3]{HLS1}.  Moreover, in the case where the $c_k$ are obtained from orbifold Hamiltonian Floer homology on $\Sym^k(M)$, these properties are established in Theorem \ref{thm:spec-sym-prod}, as an immediate consequence of Theorem \ref{t:spectral_intro}.

  {\bf \noindent Spectral invariants for characteristic functions.} 
  We begin by  defining $c_k(\chi_a)$ and $\zeta_k(\chi_a)$ for the characteristic function $\chi_a$ of $B_a$.   Let $\chi^r_a : M \to \bR$ be a smooth function which is supported in the slightly larger ball $B_{a+r} \subset \mathcal{U}$, depends only on the radius (meaning $\chi^r_a(x_1, y_1, \ldots, x_n, y_n)$ depends only on the value of $\sum x_i^2 + y_i^2$),  takes the the value $1$ on $B_a$, and is non-increasing as a function of the radius.  We, moreover, pick these functions such that $\chi^{r_1}_a \leq \chi^{r_2}_a$ if $r_1 \leq r_2$.  We define $$c_k(\chi_a):= \lim_{r \to 0} c_k(\chi^r_a).$$
  The limit exists because the function $r \mapsto c_k(\chi^r_a)$ is decreasing and bounded below (by $c_k(F)$ for any $F \leq \chi_a)$.  One can similarly define $c_k(c \,  \chi_a)$ for any constant $ c \in \R$. 

  As for $\zeta_k( \chi_a)$ it can be defined in two different manners with the end results coinciding.  One way would be to define $\zeta_k(\chi_a):= \lim_{r \to 0} \zeta_k(\chi^r_a)$.  A second way would be to define  it as the homogenization
  $\zeta_k(\chi_a) := \lim_{m \to \infty} \frac{1}{m} c_k(m \, \chi_a)$.  We leave it to the reader to verify that the two definitions coincide.  We have the following useful property for $\zeta_k$: for any $c\geq 0$ we have 
  \begin{align}\label{eqn:zeta-linear}
      \zeta_k(c \, \chi_a) = c\,  \zeta_k(\chi_a), \zeta_k(- c \, \chi_a) = c \, \zeta_k( - \chi_a).
  \end{align}

  Note also that one can similarly define  $c_k$ and $\zeta_k$ for (scalar multiples of) characteristic functions of sets of the form $\psi(B_a)$ with $\psi \in \Symp_0(M, \omega)$, i.e. functions of the form  $ c \, \chi_a \circ \psi^{-1}$. This can be done by working in the coordinate chart $\psi^{-1}(\mathcal{U})$ and the corresponding Darboux coordinates $x_i \circ \psi^{-1}, y_i \circ \psi^{-1}$.  By the symplectic invariance property we have 
  \begin{align*} 
c_k( c \,  \chi_a \circ \psi^{-1} ):= c_k(c \,  \chi_a), \, \zeta_k(c \, \chi_a \circ \psi^{-1}) := \zeta_k( c \,  \chi_a).
\end{align*}

Finally, suppose that $\psi_1, \ldots, \psi_N \in \mathrm{Symp}_0$ are such that the sets $\psi_i(B_{a})$ are pairwise disjoint.  Then, as above, we can define $c_k$, $\zeta_k$ for a function of the form $ \sum_{i=1}^N m_i \, \chi_a \circ \psi_i^{-1} $, where $m_i \in \bR$ are constants.  The definition, in the case of $c_k,$ is given by the formula:  
   $$ c_k( \sum_{i=1}^N m_i \, \chi_a \circ \psi_i^{-1}) := \lim_{r \to 0 } c_k( \sum_{i=1}^N m_i \, \chi_a \circ \psi_i^{-1} ),$$
   with a similar definition for the $\zeta_k$.

Suppose that $\psi_1, \ldots, \psi_N \in \Ham(M, \omega)$. Then, the  $\zeta_k$ satisfy the following property. 
\begin{claim}\label{cl:Sikorav2}
Let \[ G' = \sum_{i=1}^N m_i \, \chi_a, \; G = \sum_{i=1}^N m_i \, \chi_a \circ \psi_i^{-1}, \]
where the $m_i$ are constants and the $\psi_i \in \Ham(M,\omega)$ are such that the symplectic balls $\psi_1(B_a), \ldots, \psi_N(B_a)$ are pairwise disjoint.  Then,  $ \zeta_k(G) = \zeta_k(G').$

\end{claim}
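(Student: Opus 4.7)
The statement is a Sikorav-type additivity for homogenized spectral invariants: $\zeta_k$ should only see the total mass distribution of a sum of pairwise disjointly supported bumps, so the Hamiltonian positions $\psi_i$ are immaterial. Concretely, I aim to show
\[\zeta_k(G) \;=\; \sum_{i=1}^N m_i \, \zeta_k(\chi_a) \;=\; \zeta_k(G'),\]
where the second equality is built from the homogeneity relation \eqref{eqn:zeta-linear}. The strategy follows the Entov--Polterovich template for partial symplectic quasi-states: approximate the characteristic function by smooth cutoffs, use the disjoint-support hypothesis to obtain an additivity property of $\zeta_k$, and then use symplectic invariance to strip off the $\psi_i$'s.

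For the reduction, fix $r>0$ small enough that the enlarged balls $\psi_i(B_{a+r})$ remain pairwise disjoint, and set $f_i^r := m_i\, \chi_a^r \circ \psi_i^{-1}$, so $G^r = \sum_i f_i^r$. The limit definitions of $\zeta_k$ on (scalar multiples of) characteristic functions give $\zeta_k(G) = \lim_{r\to 0}\zeta_k(G^r)$ and similarly for $G'$. Since the $f_i^r$ have pairwise disjoint supports, they Poisson commute, their Hamiltonian flows commute, and $G^r = f_1^r \# \cdots \# f_N^r$ in any order. The subadditivity property (item 7 of Theorem \ref{thm:spec-sym-prod}) then gives $c_k(G^r) \le \sum_i c_k(f_i^r)$, hence the upper bound $\zeta_k(G^r) \le \sum_i \zeta_k(f_i^r)$ on the level of the homogenizations. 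For the reverse inequality, the rank-one idempotent hypothesis of Proposition \ref{p:closedWeylInto} promotes $\zeta_k$ to a partial symplectic quasi-state in the sense of Entov--Polterovich, so that $\zeta_k(F+F') = \zeta_k(F) + \zeta_k(F')$ whenever $\{F,F'\}=0$ and the summands are supported in displaceable sets -- which is the case here for small $a$. The lower bound then follows by applying subadditivity iteratively in the form $f_i^r = G^r \# \bigl(-\sum_{j\ne i}f_j^r\bigr)$ and combining with $\zeta_k(-H) = -\zeta_k(H)$ on the Abelian subalgebra generated by the $f_j^r$.

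Once additivity is in hand, symplectic invariance gives $\zeta_k(f_i^r) = \zeta_k(m_i\,\chi_a^r)$ (using $\psi_i \in \Ham \subset \mathrm{Symp}_0$), and the homogeneity relation \eqref{eqn:zeta-linear} -- applied separately to the $m_i \ge 0$ and $m_i < 0$ summands -- gives $\zeta_k(m_i\,\chi_a^r) = m_i\,\zeta_k(\chi_a^r)$ in the $+$-case, with the analogous statement involving $\zeta_k(-\chi_a^r)$ in the $-$-case. Summing and taking $r\to 0$ produces $\zeta_k(G) = \sum_i m_i\,\zeta_k(\chi_a) = \zeta_k(G')$. The main obstacle is the additivity-on-Poisson-commuting-functions property for the $\Sym^k(M)$ orbifold spectral invariants: while the Entov--Polterovich argument transplants in principle once the rank-one idempotent structure is in place, one must verify that the orbifold action and capping conventions underlying Theorem \ref{t:spectral_intro} do not introduce any spurious defects in the reverse-subadditivity step. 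Everything else -- smoothing, commutativity of flows, symplectic invariance, and homogenization -- is routine.
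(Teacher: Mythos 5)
There is a genuine gap. The claim sits inside the proof of Proposition \ref{l:char}, which is stated for an arbitrary sequence of sub-additive spectral invariants satisfying spectrality, Hofer Lipschitz, homotopy invariance, symplectic invariance and the shift property --- \emph{no} idempotent or quasimorphism structure is assumed there. Your reverse-subadditivity step imports the rank-one idempotent hypothesis of Proposition \ref{p:closedWeylInto}, which is simply not available in this context, so even if the rest worked you would only have proved a weaker statement. Worse, the mechanism you invoke --- ``$\zeta_k$ becomes an Entov--Polterovich partial symplectic quasi-state, additive on Poisson-commuting functions with displaceable supports'' --- cannot be right for these invariants: the defining feature of partial quasi-states is vanishing on displaceable sets, whereas the whole point of the symmetric-product $\zeta_k$ is that $\frac1k\zeta_k(\chi_a)\to \mathrm{Vol}(B_a)/\mathrm{Vol}(M)\neq 0$ for small (hence displaceable) balls. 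Finally, your chain of equalities $\zeta_k(G)=\sum_i m_i\zeta_k(\chi_a)=\zeta_k(G')$ breaks for mixed-sign $m_i$: relation \eqref{eqn:zeta-linear} is only positive homogeneity in $\chi_a$ and $-\chi_a$ \emph{separately}, and $\zeta_k(-\chi_a)=-\zeta_k(\chi_a)$ is not available (the hypotheses of Proposition \ref{l:char} deliberately treat $\zeta_k(\chi_a)$ and $\zeta_k(-\chi_a)$ as independent). Since $G'=(\sum_i m_i)\chi_a$, your two evaluations of the left and right sides need not agree.

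The missing idea is that no additivity is needed at all: the claim is a \emph{conjugation-invariance} statement, not an additivity statement. The paper's proof bounds $\tilde d_H(\tilde\varphi^n_{G'_r},\tilde\varphi^n_{G_r})$ uniformly in $n$. Because the smoothed bumps have pairwise disjoint supports (respectively are multiples of one another), both $\tilde\varphi^n_{G_r}$ and $\tilde\varphi^n_{G'_r}$ factor as products of commuting elements, with the factors of $G_r$ being $\tilde\psi_i$-conjugates of those of $G'_r$; bi-invariance of the Hofer pseudo-distance then gives the bound $\sum_i 2\tilde d_H(\tilde\psi_i,\mathrm{id})$, independent of $n$. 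Combined with Hofer Lipschitz, homotopy invariance, and the fact that $G_r$ and $G'_r$ have equal averages, dividing by $n$ and letting $n\to\infty$ yields $\zeta_k(G_r)=\zeta_k(G'_r)$, and $r\to0$ finishes. This argument uses only the properties actually available in the claim's context and is insensitive to the signs of the $m_i$.
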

\begin{proof}[Proof of Claim \ref{cl:Sikorav2}]

For small $r >0$, define $G'_r = \sum_{i=1}^N m_i \, \chi^r_a, \; G_r = \sum_{i=1}^N m_i \, \chi^r_a \circ \psi_i^{-1}$.  We will show that $ \zeta_k(G_r) = \zeta_k(G'_r)$, which suffices because $ \zeta_k(G) = \lim_{r \to 0 }\zeta_k(G_r)$ and $\zeta_k(G') = \lim_{r \to 0 } \zeta_k(G'_r)$, by definition.

Let $F_1, F_2$ be any two mean-normalized (smooth) Hamiltonians. The Hofer Lipschitz and homotopy invariance properties of the $c_k$, imply that   $$|c_k(F_1) - c_k(F_2) |\leq k \, \tilde{d}_H (\tilde{\varphi}_{F_1}, \tilde{\varphi}_{F_2}),$$
where $\tilde{d}_H$ denotes Hofer's (pseudo) distance on the universal cover $\widetilde{\Ham}$, and $\tilde{\varphi}_{F_1}, \tilde{\varphi}_{F_2}$ are the elements determined by $F_1, F_2$ in $\widetilde{\Ham}$.  The Hofer distance on the universal cover $\widetilde{\Ham}$ is defined via the formula
$$ \tilde{d}_H(\tilde{\psi}_1, \tilde{\psi}_2):= \inf \{\Vert H \Vert_{(1, \infty)} : \tilde\varphi_H = \tilde{\psi}_1 \tilde{\psi}_2^{-1} \}, $$
where $$ \Vert H \Vert_{(1, \infty)} := \int_0^1 \left( \max_M H(t, \cdot) - \min_M H(t, \cdot) \right) \, dt. $$
It is bi-invariant, however, its non-degeneracy, which is not needed for our arguments, is unknown.\footnote{The Hofer distance on $\Ham$, defined via 
$d_H(\psi_1, \psi_2):= \inf \{ \Vert H \Vert_{(1, \infty)} : \varphi_H = \psi_1\psi_2^{-1} \}$, is non-degenerate.}

It follows from the previous inequality that $$|c_k(F_1^n) - c_k(F_2^n) |\leq k \, \tilde{d}_H (\tilde{\varphi}_{F_1}^n, \tilde{\varphi}_{F_2}^n),$$
for all integers $n$.  Hence, if $F_1, F_2$ are such that $\tilde{d}_H (\tilde{\varphi}_{F_1}^n, \tilde{\varphi}_{F_2}^n)$ is bounded independently of $n$, then $\zeta_k(F_1) = \zeta_k(F_2)  $.

We will show below that $\tilde{d}_H (\tilde{\varphi}_{G_r'}^n , \tilde{\varphi}_{G_r}^n)$ is bounded independently of $n$. This, on its own, does not imply $\zeta_k(G_r') = \zeta_k(G_r)$ because $G_r'$ and $G_r$ are not mean-normalized. However, they have the same average, i.e. $\int G_r' \, \omega^n  =  \int G_r \, \omega^n$, and this suffices for concluding that $\zeta_k(G_r') = \zeta_k(G_r)$. 

 Note the $\varphi^t_{G'_r} = \varphi^t_{m_1 \chi_a^r} \circ \ldots \circ \varphi^t_{m_N \chi_a^r}$ and $\varphi^t_{G_r} = \varphi^t_{m_1 \chi_a^r \circ \psi_1^{-1}} \circ \ldots \circ \varphi^t_{m_N \chi_a^r \circ \psi_N^{-1}} = \psi_1 \varphi^t_{m_1 \chi_a^r} \psi_1^{-1} \circ \ldots \circ \psi_N \varphi^t_{m_N \chi_a^r} \psi_N^{-1}$.  Hence, we can write $\tilde{\varphi}_{G'_r} = \tilde{\varphi}_{m_1 \chi_a^r} \circ \ldots \circ \tilde{\varphi}_{m_N \chi_a^r}$ and $\tilde{\varphi}_{G_r} = \tilde{\psi}_1\tilde{\varphi}_{m_1 \chi_a^r} \tilde{\psi}_1^{-1} \circ \ldots \circ \tilde{\psi}_N\tilde{\varphi}_{m_N \chi_a^r} \tilde{\psi}_N^{-1},$ where $\tilde{\psi}_i$ is a lift of $\psi_i$ to $\widetilde{\Ham}(M)$.
 
We have the identity
$$\tilde{\varphi}_{G'_r}^n = \tilde{\varphi}^n_{m_1 \chi_a^r} \circ \ldots \circ \tilde{\varphi}^n_{m_N \chi_a^r}$$
for all $n \in \N$, because the elements involved in the composition pairwise commute. Similarly, we have  $$\tilde{\varphi}_{G_r}^n =  \tilde{\psi}_1\tilde{\varphi}^n_{m_1 \chi_a^r} \tilde{\psi}_1^{-1} \circ \ldots \circ \tilde{\psi}_N\tilde{\varphi}^n_{m_N \chi_a^r} \tilde{\psi}_N^{-1}$$
for  all $n \in \N$. Here, we are using the fact the $\tilde{\psi}_i\tilde{\varphi}_{m_i \chi_a^r} \tilde{\psi}_i^{-1}$ pairwise commute which holds because the generating Hamiltonians $\chi^r_a \circ \psi_i^{-1}$ have pairwise disjoint supports.

Consequently, 
\begin{align*}
\tilde{d}_H (\tilde{\varphi}_{G_r'}^n , \tilde{\varphi}_{G_r}^n) & = \tilde{d}_H (\tilde{\varphi}^n_{m_1 \chi_a^r} \circ \ldots \circ \tilde{\varphi}^n_{m_N \chi_a^r}, \tilde{\psi}_1\tilde{\varphi}^n_{m_1 \chi_a^r} \tilde{\psi}_1^{-1} \circ \ldots \circ \tilde{\psi}_N\tilde{\varphi}^n_{m_N \chi_a^r} \tilde{\psi}_N^{-1}) 
 \\ 
 & \leq \sum_{i=1}^N \tilde{d}_H(\tilde{\varphi}^n_{m_i \chi_a^r},  \tilde{\psi}_i\tilde{\varphi}^n_{m_i \chi_a^r} \tilde{\psi}^{-1}_i) \\
& \leq \sum_{i=1}^N 2 \tilde{d}_H(\tilde{\psi}_i, \id). 
\end{align*}
The RHS of the above inequality is independent of $N$.  This completes the proof of Claim \ref{cl:Sikorav2}.  
\end{proof}

We can now prove that the Weyl law for characteristic functions implies the Weyl law for any $F \in C^\infty([0,1] \times M)$, that is 
  
  \begin{equation}\label{eqn:Weyl law}
\lim_{k \to \infty } \; \; \frac{1}{k} \zeta_k(F) = \frac{1}{\mathrm{Vol}( M)}\int_0^1 \int_{ M} F_t \,  \omega^n\, dt.
\end{equation}

\medskip

To simplify our notation, for the rest of the proof we will denote  $\int F : = \int_0^1 \int_{ M} F_t \,  \omega^n\, dt $ for any Hamiltonian $F$ on $ M$.

  {\noindent \bf The lower bound.}  We will now prove that
\begin{align}\label{eqn:Weyl-lowerbound}
\liminf_{k \to \infty } \; \; \frac{1}{k} \zeta_k(F) \geq \frac{1}{\mathrm{Vol}(M)}\int F.
\end{align}

According to the Shift property $\zeta_k(F + c) = \zeta_k(F) + kc$  for any constant $c$.  Hence, up to the addition of a constant,  we may assume without loss of generality that $F$ is strictly positive.

We will need the following lemma whose proof, which we leave to the reader, follows from standard arguments in symplectic packing and Riemann integration theory.
\begin{lemma}\label{lem:Riemann-integration} Let $F_1, \ldots, F_p$ be time independent Hamiltonians.  For any given $\varepsilon >0$, there exists $a > 0$, Hamiltonian diffeomorphisms $\psi_1, \ldots, \psi_N \in \Ham ( M)$ with the following properties:
\begin{enumerate}
\item  $\psi_i(B_{a})$ are pairwise disjoint, and moreover $$\mathrm{Vol} ( \sqcup_{i=1}^N \psi_i(B_{a} )) \geq \mathrm{Vol} ( M) - \varepsilon, $$

\item For $i=1, \ldots, N$ and $l=1, \ldots, p$, let $$m_i(F_l) = \inf_{\psi_i(B_a)}F_l.$$
The functions $G_1, \ldots, G_p$ defined by the formula
    $$G_l:= \sum_{i=1}^N m_i(F_l) \chi_a \circ \psi_i^{-1}$$ satisfy the inequalities $G_l \leq F_l$ and
        \begin{align} \label{eqn:Riemann-sum-estimate}
       \frac{1}{\mathrm{Vol}( M) } \int F _l -\varepsilon \leq \frac{1}{\mathrm{Vol} M) }  \int G_l.
        \end{align}  
\end{enumerate}
\end{lemma}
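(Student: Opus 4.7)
The plan is to combine three classical ingredients: an efficient packing of $M$ by small symplectic balls, the fact that symplectic embeddings of a ball into a connected symplectic manifold are unique up to Hamiltonian isotopy, and uniform continuity of the $F_l$. Since the lemma is applied in a setting where $F$ has already been made strictly positive by the Shift property, I may assume each $F_l \geq 0$ on $M$ (otherwise one first adds a sufficiently large constant, which does not affect the conclusion up to redefining $m_i(F_l)$).

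First I would choose $a > 0$. Since $M$ is compact and each $F_l$ is smooth, the family $\{F_l\}$ is equi-uniformly continuous with respect to any fixed background Riemannian metric. Hence I may pick $a_1 > 0$ such that for every $a \leq a_1$ and every symplectically embedded ball $B \subset M$ of radius $a$, one has $\mathrm{osc}_B(F_l) < \varepsilon/(2\mathrm{Vol}(M))$ for all $l = 1, \ldots, p$. Next, by standard symplectic packing in Darboux charts (cover $M$ by finitely many closed Darboux charts and tile the interior of each by small Euclidean balls), for any prescribed $\varepsilon_1 > 0$ there exists $a_2 > 0$ so that, for all $a \leq a_2$, one can produce pairwise disjoint symplectic embeddings $\iota_1, \ldots, \iota_N \colon B_a \hookrightarrow M$ with $\mathrm{Vol}\bigl(\sqcup_i \iota_i(B_a)\bigr) \geq \mathrm{Vol}(M) - \varepsilon_1$. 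Setting $\varepsilon_1 := \varepsilon \cdot \mathrm{Vol}(M)/(1 + \max_l \|F_l\|_{C^0})$ and choosing $a = \min(a_1, a_2)$ achieves both estimates simultaneously.

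To promote the $\iota_i$ to Hamiltonian diffeomorphisms, I invoke the classical fact (see McDuff--Salamon) that any two symplectic embeddings of a standard ball of radius $a$ into a connected symplectic manifold are isotopic through a compactly supported Hamiltonian isotopy. Fixing a reference embedding $\iota_0 \colon B_a \hookrightarrow M$ whose image is what we call $B_a \subset M$, for each $i$ one obtains $\psi_i \in \Ham(M, \omega)$ with $\psi_i \circ \iota_0 = \iota_i$, so that $\psi_i(B_a) = \iota_i(B_a)$. Disjointness and the volume estimate of item (1) are then immediate.

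Finally, for item (2), on each ball $\psi_i(B_a)$ the bound $G_l = m_i(F_l) \leq F_l$ holds by definition, while $G_l = 0 \leq F_l$ holds outside $\sqcup_i \psi_i(B_a)$ by the positivity reduction; thus $G_l \leq F_l$ globally. Uniform continuity gives
\[
\int_{\psi_i(B_a)} F_l \, \omega^n - m_i(F_l)\,\mathrm{Vol}(\psi_i(B_a)) < \frac{\varepsilon}{2\mathrm{Vol}(M)}\,\mathrm{Vol}(\psi_i(B_a)),
\]
and summing over $i$ together with the packing volume estimate yields $\int F_l - \int G_l \leq \varepsilon \, \mathrm{Vol}(M)$, which is the required inequality \eqref{eqn:Riemann-sum-estimate} after dividing by $\mathrm{Vol}(M)$. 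The only nontrivial input is the symplectic packing step, but this is entirely standard; the rest is a direct Riemann-sum argument that does not interact with the Floer-theoretic content of the paper.
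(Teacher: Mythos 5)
The paper leaves this lemma unproved, pointing only to ``standard arguments in symplectic packing and Riemann integration theory,'' so the comparison is with what those arguments actually require. Your architecture (pack $M$ by equal balls, realize the packing by elements of $\Ham(M)$, then run a Riemann-sum estimate) is the right one, and the positivity reduction and the final estimate are fine. But the packing step as you describe it fails. Tiling Darboux charts ``by small Euclidean balls'' cannot capture volume $\geq \mathrm{Vol}(M)-\varepsilon_1$ for arbitrary $\varepsilon_1$: disjoint congruent round balls in $\mathbb{R}^{2n}$ cover at most the sphere-packing density $\delta_{2n}<1$ of the volume, independently of the radius ($\delta_2=\pi/\sqrt{12}\approx 0.907$, $\delta_4\approx 0.617$, decreasing with dimension). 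Taking $F_l\equiv 1$ then shows \eqref{eqn:Riemann-sum-estimate} fails for small $\varepsilon$, so item (1) genuinely needs asymptotically full packing by $N$ equal symplectic balls whose images are \emph{not} round. This is the nontrivial input the paper is alluding to: the flexibility results of symplectic packing (Traynor/Schlenk-type wrapping constructions, packing stability in the sense of Biran and Buse--Hind--Opshtein), which assert that a cube can be filled up to any prescribed volume defect by sufficiently many disjoint equal balls. Note that in dimension $\geq 4$ a \emph{single} ball cannot almost-fill a cube, by the nonsqueezing/capacity obstruction, so ``many equal balls'' is essential and the statement is not reachable by any pointwise Euclidean construction.

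Two secondary points then need care. First, once the genuine packing embeddings are used, the images $\psi_i(B_a)$ are wrapped regions whose diameter is not controlled by $a$; your blanket claim that $\mathrm{osc}_B(F_l)$ is small for ``every symplectically embedded ball of radius $a$'' is false for the same reason. The standard fix is to first subdivide $M$ into metrically small cubes on which each $F_l$ oscillates by less than the allowed error, and then fully pack each cube by equal balls whose images stay inside that cube, so that $m_i(F_l)$ is still within the oscillation bound of the values of $F_l$ on $\psi_i(B_a)$. Second, promoting the packing embeddings to maps $\psi_i\in\Ham(M)$ should not be done by quoting a general connectivity statement for spaces of symplectic ball embeddings (which is not known in this generality); rather, the explicitly constructed embeddings, living in Darboux charts, are Hamiltonian isotopic to a reference embedding after an inessential shrinking of the ball, and that shrinking is absorbed into $\varepsilon$.
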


Proceeding with the proof of the lower bound inequality \eqref{eqn:Weyl-lowerbound}, we will first prove it under the assumption that the Hamiltonian $F$  is time independent.   Fix $\eps >0$ and apply Lemma \ref{lem:Riemann-integration} to $F$ to obtain $a >0$,  $\psi_1, \cdots, \psi_N \in \Ham(M)$, and $G$ as in the conclusion of Lemma \ref{lem:Riemann-integration}; note that in this case we have $p=1$.   The Weyl law for $\chi_a$, which we are assuming, states that  
\begin{align}\label{eqn:stable-Weyl1}
\frac{1}{k} \zeta_k( \chi_a ) \to \frac{ \mathrm{Vol}(B_a)}{\mathrm{Vol}( M)}
\end{align}

Observe that, by \eqref{eqn:zeta-linear}, we have $\zeta_k(\sum m_i(F) \, \chi_a ) =  \sum m_i(F) \zeta_k(\chi_a)$; note $m_i(F) \geq 0$. 
 It follows from \eqref{eqn:stable-Weyl1}, combined with the fact that $\zeta_k(\sum m_i(F) \, \chi^r ) = \sum m_i(F) \zeta_k(\chi^r)$, that 
\begin{align*}  
 \frac{1}{k} \zeta_k( \sum_{i=1}^N m_i(F) \, \chi_a ) \to \frac{1}{\mathrm{Vol}( M)} \sum_{i=1}^N m_i(F) \, \mathrm{Vol}(B_a).  
\end{align*}

Now, since $G = \sum_{i=1}^N m_i(F) \chi_a \circ \psi_i^{-1}$, we have $\int G = \sum_{i=1}^N m_i(F) \, \int \chi_a$   and so from the above we conclude that

$$  \frac{1}{k} \zeta_k( \sum_{i=1}^N m_i(F) \, \chi_a  ) \to \frac{1}{\mathrm{Vol}( M)} \int G \geq \frac{1}{\mathrm{Vol}( M)} \int F -\varepsilon,$$

where the last inequality follows from \eqref{eqn:Riemann-sum-estimate}.  According to Claim \ref{cl:Sikorav2}, we have that $\zeta_k(G) = \zeta_k( \sum_{i=1}^N m_i(F) \, \chi_a)$.  Hence the above becomes 
$$  \frac{1}{k} \zeta_k( G ) \to \frac{\int G}{\mathrm{Vol}( M)} \geq \frac{\int F}{\mathrm{Vol}( M)} -\varepsilon,$$
 Noting that $\zeta_k(G) \leq \zeta_k(F)$, because $G \leq F$ and $\zeta_k$ is monotone, we conclude that for all $k$ large we have 
$$\frac{1}{k} \zeta_k(F) \geq  \frac{\int F}{\mathrm{Vol}( M)} - \varepsilon,$$
which establishes the lower bound inequality \eqref{eqn:Weyl-lowerbound} for time independent $F$.

   Suppose now that $F$ is time-dependent.  
 
 \begin{claim}\label{cl:approx-timedep}
 For any given $\eta > 0$, there exists time-independent Hamiltonians $F_1, \ldots, F_p$ such that
$$ \Vert F - F_1 \diamond \cdots \diamond F_p  \Vert_{(1, \infty)} \leq \eta.$$

 \end{claim}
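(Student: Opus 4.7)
The plan is to use uniform continuity of $F$ on the compact set $[0,1] \times M$ to approximate $F$ by a Hamiltonian that is piecewise time-independent. Given $\eta > 0$, I would pick $p \in \bN$ large enough that the $C^0$-oscillation over $M$ of $F(t, \cdot) - F(s, \cdot)$ is at most $\eta/2$ whenever $|s-t| \leq 1/p$. Partitioning $[0,1]$ into equal subintervals $I_j = [(j-1)/p, j/p]$ for $j = 1, \ldots, p$, I would set the time-independent Hamiltonians $F_j := F((j-1)/p, \cdot)$ (any other sample value on $I_j$ works equally well).

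Interpreting $F_1 \diamond \cdots \diamond F_p$ in the standard way, as the time-dependent Hamiltonian equal to $F_j$ on $I_j \times M$ whose time-$1$ map is the composition $\varphi^{1/p}_{F_p} \circ \cdots \circ \varphi^{1/p}_{F_1}$, the $(1,\infty)$-norm of $F - F_1 \diamond \cdots \diamond F_p$ decomposes as a sum over the intervals $I_j$ of the $M$-oscillation of $F(t, \cdot) - F_j$. Each such oscillation is controlled by the $C^0$-oscillation chosen above, so summing over $j$ and integrating in $t$ gives the required bound $\eta$.

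There is no serious obstacle here: the claim is a standard approximation of time-dependent Hamiltonians by piecewise autonomous ones. The only minor subtlety concerns exactly which convention one adopts for $\diamond$ (pure concatenation versus concatenation with time-reparametrization so that each $F_j$'s full time-$1$ flow is realized on $I_j$). Since downstream applications of the claim use only the $\|\cdot\|_{(1,\infty)}$-norm estimate together with the subadditivity of $\zeta_k$ applied to a composition of the individual autonomous flows, and both features are invariant under time reparametrization, the conclusion is unaffected by the choice of convention.
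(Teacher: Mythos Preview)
Your approach is essentially the paper's: sample $F$ at equally spaced times and use uniform continuity in $t$. The paper's proof is exactly two lines and does the same thing.

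The one point where you diverge is the normalization of the $F_j$. In the paper, $\diamond$ is the \emph{reparametrized} concatenation
\[
(H_1 \diamond \cdots \diamond H_p)(t,x) = p\,H_i\bigl(p(t-(i-1)/p),x\bigr) \quad \text{for } t\in[(i-1)/p,\,i/p],
\]
so for time-independent $F_i$ the concatenation equals $p F_i$ on the $i$-th subinterval, not $F_i$. With your choice $F_j = F((j-1)/p,\cdot)$ and the paper's $\diamond$, the resulting Hamiltonian is $pF((j-1)/p,\cdot)$ on $I_j$, which is off by a factor $p$ and the $(1,\infty)$-estimate fails. The paper accounts for this by taking $F_i(\cdot) = \tfrac{1}{p}F(i/p,\cdot)$, so that $(F_1\diamond\cdots\diamond F_p)|_{I_i} = F(i/p,\cdot)$ and the estimate goes through. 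You flag the convention issue, but the claim is stated with the paper's $\diamond$, so the fix is not optional: simply divide your $F_j$ by $p$ and your argument matches the paper's verbatim.
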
 
 In the above $\diamond$ denotes the concatenation operation which we now define.  Suppose that we are given, possibly time dependent, Hamiltonians $H_1, \ldots, H_p: [0,1] \times M \rightarrow \R $.   Via a well-know reparametrization argument, we may assume that $H_i(t, \cdot) = 0$ for $t$ sufficiently close to $0,1$.  

Define 
\begin{equation*}\label{eq:sharp}
H_1 \diamond \cdots \diamond H_p(t, x) := pH_i(p (t -(i-1)/p), x) \quad \text{for}\quad t \in [(i-1)/p, i/p].
\end{equation*}
  The Hamiltonian isotopy generated by $H_1 \diamond \cdots \diamond H_p$ is simply (a reparametrization of) the concatenation of the Hamiltonian flows of $H_1, \ldots, H_p$. In particular, $\varphi_{H_1 \diamond \cdots \diamond H_p } = \varphi_{H_p} \circ \cdots \circ \varphi_{H_1}$.  Moreover, the two Hamiltonian isotopies  $\varphi^t_{H_1 \diamond \cdots \diamond H_p }$ and  $\varphi^t_{H_p} \circ \cdots \circ \varphi^t_{H_1}$ are homotopic rel endpoints.

\begin{proof}[Proof of Claim \ref{cl:approx-timedep}]
Pick $p$ so large so that $\Vert F(t, \cdot ) -  F(\frac{i}{p}, \cdot) \Vert_\infty \leq \eta$ for every $t \in [\frac{i-1}{p}, \frac{i}{p}]$, with $i=1, \ldots, p$. 
Let $F_i(\cdot) = \frac{1}{p} F(\frac{i}{p}, \cdot)$.
\end{proof} 
 
 As a consequence of Claim \ref{cl:approx-timedep}, it suffices to prove \eqref{eqn:Weyl-lowerbound} for a Hamiltonian of the form $F_1 \diamond \cdots \diamond F_p$ where each of $F_1, \ldots, F_p$ is time independent.

 Fix $\varepsilon >0$.  Apply Lemma \ref{lem:Riemann-integration} to the Hamiltonians $F_1, \ldots, F_p$, and let $a$, $\psi_1, \ldots, \psi_N$ and $G_1, \ldots, G_p$ be as in the conclusion of the lemma. For $i=1, \ldots, p$, let $G_l' = \sum_{i=1}^N m_i(F_l) \chi_a$.
 
Our argument from the time-independent case implies that for $k$ large we have 
\begin{equation}\label{eqn:time-indep-case}
 \frac{1}{k}\zeta_k(G_l) =\frac{1}{k}\zeta_k(G'_l) \geq \frac{\int F_l}{\mathrm{Vol}( M)} - \varepsilon.
\end{equation}

Since $G_l \leq F_l$, we have that $G_1 \diamond \ldots \diamond G_p \leq F_1 \diamond \ldots \diamond F_p$ and so we get the inequality 

\begin{equation}\label{eqn:G_l leq F_l}
\zeta_k(G_1 \diamond \ldots \diamond G_p) \leq \zeta_k(F_1 \diamond \ldots \diamond F_p).
\end{equation}

\begin{claim}\label{eqn:diamond=plus}
$\zeta_k(G_1 \diamond \ldots \diamond G_p) = \zeta_k(G_1 + \ldots + G_p).$
\end{claim}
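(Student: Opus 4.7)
The plan is to reduce to smooth data, and then exploit the fact that the pieces $\chi_a^r\circ\psi_i^{-1}$ of each $G_l$ all pairwise Poisson-commute, so that the concatenation $G_1^r\diamond\cdots\diamond G_p^r$ and the sum $G_1^r+\cdots+G_p^r$ determine the same element of $\widetilde{\Ham}(M,\omega)$ and have identical time-averages.

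Concretely, I would first fix smooth radial bumps $\chi_a^r$ and set $G_l^r:=\sum_{i=1}^N m_i(F_l)\,\chi_a^r\circ\psi_i^{-1}$. By the definition of $\zeta_k$ on (time-independent) characteristic-type Hamiltonians, and by the analogous smoothing definition for the time-dependent Hamiltonian $G_1\diamond\cdots\diamond G_p$, it suffices to prove
\[
\zeta_k(G_1^r\diamond\cdots\diamond G_p^r)=\zeta_k(G_1^r+\cdots+G_p^r)
\]
for all small $r>0$ and then let $r\to 0$.

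The key point is Poisson commutativity. For any $l,l'$ and $i,j$, the Hamiltonians $\chi_a^r\circ\psi_i^{-1}$ and $\chi_a^r\circ\psi_j^{-1}$ Poisson-commute: if $i\ne j$ then for $r$ sufficiently small the supports $\psi_i(B_{a+r})$, $\psi_j(B_{a+r})$ are disjoint (this uses the disjointness in Lemma \ref{lem:Riemann-integration}), while if $i=j$ both are the same radial function in the Darboux chart $\psi_i(\mathcal{U})$, which Poisson-commutes with itself. Bilinearity then gives $\{G_l^r,G_{l'}^r\}=0$ for all $l,l'$. Consequently the flows $\varphi_{G_l^r}^t$ pairwise commute, so
\[
\varphi^t_{G_1^r+\cdots+G_p^r}=\varphi^t_{G_1^r}\circ\cdots\circ\varphi^t_{G_p^r},
\]
and the standard "diagonal path versus edge path" homotopy inside the abelian one-parameter subgroup generated by $G_1^r,\dots,G_p^r$ shows that the path of the concatenation $\varphi^t_{G_1^r\diamond\cdots\diamond G_p^r}$ is homotopic rel endpoints to this same path. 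Hence $G_1^r\diamond\cdots\diamond G_p^r$ and $G_1^r+\cdots+G_p^r$ determine the same class in $\widetilde{\Ham}(M,\omega)$.

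To close, a direct integration check shows $\int_0^1\!\!\int_M G_1^r\diamond\cdots\diamond G_p^r\,\omega^n\,dt=\sum_l\int_M G_l^r\,\omega^n=\int_0^1\!\!\int_M(G_1^r+\cdots+G_p^r)\,\omega^n\,dt$. Mean-normalizing each Hamiltonian changes it by a function of $t$ alone, shifting $c_k$ in a controlled way by the Shift property; since the averages agree and the mean-normalized representatives define the same class in $\widetilde{\Ham}$, Homotopy Invariance yields $c_k(G_1^r\diamond\cdots\diamond G_p^r)=c_k(G_1^r+\cdots+G_p^r)$. Iterating under $\#$ preserves commutativity of the flows, so the same equality holds for every power, hence for $\zeta_k$. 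Sending $r\to 0$ gives the claim. I do not foresee a serious obstacle here; the only point requiring care is to verify that the smoothing-limit definition of $\zeta_k$ on the time-dependent object $G_1\diamond\cdots\diamond G_p$ is the natural one and is computed from the same sequence $G_l^r$ as $r\to 0$, which is built into the reparametrized concatenation.
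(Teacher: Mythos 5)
Your proof is correct and follows essentially the same route as the paper's: smooth the $G_l$, use that the smoothed Hamiltonians are linear combinations of a single pairwise Poisson-commuting family so that the concatenation and the sum define the same element of $\widetilde{\Ham}(M,\omega)$ with equal averages, and then invoke homotopy invariance plus the Shift property before letting $r\to 0$. The paper's version is just a more compressed statement of the same argument.
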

\begin{proof}
As in Claim \ref{cl:Sikorav2}, one proves this by showing that $\zeta_k(G_{1,r} \diamond \ldots \diamond G_{p,r}) = \zeta_k(G_{1,r} + \ldots + G_{p,r})$
where $G_{i,r}$ is a smooth approximation of $G_i$.
Now, for all sufficiently small $r>0$, the $G_{i,r}$ consist of sums of commuting time-independent Hamiltonians, which then implies that $G_{1,r} \diamond \ldots \diamond G_{p,r}$ and $G_{1,r} + \ldots + G_{p,r}$ determine the same element in the universal cover $\widetilde{\Ham}$.  Moreover, they have the same average.  It then follows that $\zeta_k(G_{1,r} \diamond \ldots \diamond G_{p,r}) = \zeta_k(G_{1,r} + \ldots + G_{p,r})$.
\end{proof}

Now note that the time-independent Hamiltonians $G_1 + \ldots + G_p, G'_1 + \ldots + G'_p$ are of the same form as the Hamiltonians $G, G'$ in Claim \ref{cl:Sikorav2} and hence
\begin{equation}\label{eqn:G_l=G'_l}
\zeta_k(G_1 + \ldots + G_p) = \zeta_k(G'_1 + \ldots + G'_p).
\end{equation}

Since the $G'_l$ are all positive scalar multiples of $\chi_a$ we have, by \eqref{eqn:zeta-linear}, that $\zeta_k(G'_1 + \ldots + G'_p) = \zeta_k(G'_1) + \ldots + \zeta_k( G'_p)$.  Combining this with \eqref{eqn:G_l leq F_l}, \eqref{eqn:diamond=plus}, \eqref{eqn:G_l=G'_l} we obtain

$$\zeta_k(G'_1) + \ldots + \zeta_k(G'_p) \leq \zeta_k( F_1 \diamond \ldots \diamond F_p).$$

This inequality together with \eqref{eqn:time-indep-case}
gives us 

$$\sum_{l=1}^p \frac{\int F_l}{\mathrm{Vol}( M)} - p \varepsilon \leq \frac{1}{k} \zeta_k( F_1 \diamond \ldots \diamond F_p).$$

Since $\sum_{l=1}^p \int F_l = \int F_1 \diamond \ldots \diamond F_p$, we conclude that the \eqref{eqn:Weyl-lowerbound} holds for $F_1 \diamond \ldots \diamond F_p$.

 \medskip
{\noindent \bf The upper bound.} It remains to prove that
\begin{align}\label{eqn:Weyl-upperrbound}
\limsup_{k \to \infty } \; \; \frac{1}{k} \zeta_k(F) \leq \frac{1}{\mathrm{Vol}(M)}\int F.
\end{align}

The proof of this is very similar to that of the lower bound inequality \eqref{eqn:Weyl-lowerbound}, hence we will omit it.  We only remark here that the main difference between the two cases is that the proof of the lower bound inequality relied on the Weyl law for $\chi_a$ while the upper bound inequality requires the Weyl law for $-\chi_a$.
\end{proof}

\subsection{Idempotents and symmetric product orbifolds}

\emph{We formulate conditions on a sequence of idempotents associated to symmetric product orbifolds which are sufficient to infer the Weyl law. The argument uses the sequence of quasimorphisms associated to the idempotents.}

Even allowing for bulk deformations, the ring $QH^*(M)$ does not have enough interestingly different spectral invariants to possibly yield a Weyl law.  Following ideas from \cite{CGHMSS}, one obtains more flexibility by introducing the symmetric products of $M$, and using the existence of the canonical maps
\begin{align*}
C^{\infty}([0,1]\times M) &\to C^{\infty}([0,1] \times \Sym^k(M))\\
H & \mapsto \Sym^k(H)_t(x_1,\dots,x_k):=\sum_{i=1}^k H_t(x_i).
\end{align*}
where the target denotes the functions which are smooth in the orbifold sense (locally descended from smooth equivariant functions on uniformizing orbifold charts).

We will consider a sequence of classes $e_k \in QH^*(\Sym^k(M);\mathfrak{b}_k)$ where $\mathfrak{b}_k \in H^*(I\Sym^k(M);\Lambda_{>0})$ is a certain bulk deformation class.

\begin{prop}\label{prop:idempotent}
Suppose the $e_k$ satisfy that
\begin{enumerate}
\item $e_k$ is an indecomposable idempotent, and indeed $e_k \cdot QH^*(\Sym^k(M);\mathfrak{b}_k) = e_k\cdot \Lambda$ has rank one over the ground field;
\item the valuations $\val(e_k)$ grow sub-linearly with $k$, meaning $\val(e_k)/k \to 0$ as $k\to \infty$.
\end{enumerate}
Then the associated spectral invariants 
\begin{align*}
c_k: C^{\infty}([0,1]\times M) &\to \bR \text{ given by }H\mapsto c^{\mathfrak{b}_k}(e_k,\Sym^k(H))
\end{align*}
satisfy the Weyl law. 
\end{prop}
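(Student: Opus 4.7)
The plan is to invoke Proposition \ref{l:char}: it suffices to show that the homogenised spectral invariants
\[
\zeta_k(H) := \lim_{m\to\infty} \frac{c^{\mathfrak{b}_k}(e_k, \Sym^k(H)^m)}{m}
\]
satisfy $\zeta_k(\pm \chi_a)/k \to \pm \mathrm{Vol}(B_a)/\mathrm{Vol}(M)$ for all sufficiently small Darboux balls $B_a\subset M$, and then to transfer the resulting Weyl law to the unhomogenised $c_k$. The limit exists by sub-additivity of $c^{\mathfrak{b}_k}(e_k,-)$, and since $\Sym^k:\widetilde{\Ham}(M)\to\widetilde{\Ham}(\Sym^k(M))$ is a group homomorphism, $\zeta_k$ descends to a well-defined function on $\widetilde{\Ham}(M)$.

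The hypothesis that $e_k$ is indecomposable with $e_k\cdot QH^*(\Sym^k(M);\mathfrak{b}_k)\cdot e_k \cong \Lambda\cdot e_k$ of rank one places us in the Entov--Polterovich regime: $\zeta_k^{\Sym}$ on $\widetilde{\Ham}(\Sym^k(M))$ is a homogeneous quasi-morphism whose defect $D_k$ is controlled by $\val(e_k)$. Pulling back by $\Sym^k$, the induced $\zeta_k$ on $\widetilde{\Ham}(M)$ inherits, in homogenised form, the properties of Theorem \ref{thm:spec-sym-prod}: shift (so $\zeta_k(c)=kc$ for constants, in particular $\zeta_k(1)=k$), monotonicity, Hofer Lipschitz with factor $k$, exact additivity on Poisson-commuting Hamiltonians, and the antipodal symmetry $\zeta_k(-H) = -\zeta_k(H)$ for autonomous $H$. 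The symmetry property reduces the task to establishing $\lim_k \zeta_k(\chi_a)/k = \mathrm{Vol}(B_a)/\mathrm{Vol}(M)$.

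For the upper bound I would use the packing argument of Claim \ref{cl:Sikorav2}: given $\varepsilon>0$, pick $N$ disjoint Hamiltonian translates $\psi_i(B_a)$ with $N\cdot\mathrm{Vol}(B_a)\geq (1-\varepsilon)\mathrm{Vol}(M)$ and set $G = \sum_i \chi_a\circ\psi_i^{-1} \leq 1$. Poisson-commuting additivity and symplectic invariance yield $\zeta_k(G) = N\zeta_k(\chi_a)$, while monotonicity gives $\zeta_k(G)\leq \zeta_k(1)=k$, hence $\zeta_k(\chi_a)/k \leq \mathrm{Vol}(B_a)/((1-\varepsilon)\mathrm{Vol}(M))$. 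For the matching lower bound I exploit the decomposition identity
\[
\zeta_k(\chi_a) + \zeta_k(1-\chi_a) = \zeta_k(1) = k
\]
(from Poisson commutativity of $\chi_a$ with $1-\chi_a$), reducing the problem to an upper bound on $\zeta_k(1-\chi_a)$. I exhaust $M\setminus B_a$ iteratively by disjoint Hamiltonian balls of shrinking radii $a_n\to 0$: at stage $n$ one packs the residual open set $W_{n-1}$ with disjoint translates of $B_{a_n}$ with packing efficiency approaching one, leaving a residual $W_n$ of volume tending to zero. The already established upper bound handles each packed contribution via Poisson-commuting additivity, while a Darboux tiling step encloses the residual $W_n$ inside a disjoint union of Hamiltonian balls of total volume $\leq (1+\eta)\mathrm{Vol}(W_n)$, so that $\zeta_k(\chi_{W_n})/k \to 0$. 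Assembling the pieces produces the matching lower bound $\liminf_k \zeta_k(\chi_a)/k \geq \mathrm{Vol}(B_a)/\mathrm{Vol}(M)$.

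The main technical obstacle lies in this last iterative-covering step: one must verify a Darboux tiling lemma allowing arbitrary small-volume open subsets of $M$ to be enclosed in disjoint unions of Hamiltonian balls of comparable total volume, and then check that the stage-by-stage contributions sum uniformly in $k$. Once the Weyl law is established for $\zeta_k$, it transfers to the original $c_k$ via the standard quasi-morphism inequality $|c^{\mathfrak{b}_k}(e_k,\Sym^k(H)) - \zeta_k(H)|\leq D_k$; since $D_k = O(\val(e_k))$, the hypothesis $\val(e_k)/k\to 0$ forces $|c_k(H)/k - \zeta_k(H)/k| \to 0$, completing the argument.
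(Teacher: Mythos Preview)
Your approach is genuinely different from the paper's, and the gap you yourself flag as ``the main technical obstacle'' is in fact fatal as written.

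The paper does \emph{not} go through Proposition~\ref{l:char}. Instead it observes that the rank-one idempotent hypothesis gives $c_k(H)+c_k(\bar H)\le -\val(e_k)$ (via \cite[Theorem 12.6.1]{Polterovich-Rosen}), so that $\mu_k:=c_k/k$ on mean-normalised Hamiltonians is a quasimorphism on $\widetilde{\Ham}(M)$ with defect at most $-\val(e_k)/k\to 0$. Since the $\mu_k$ are pointwise bounded (Hofer Lipschitz) and $\widetilde{\Ham}(M)$ is perfect, a short group-theoretic lemma (Lemma~\ref{lem:quasi-converge-hom}) forces $\mu_k\to 0$ pointwise. The Weyl law follows immediately, with no packing or covering analysis at all.

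Your route, by contrast, gets stuck precisely at the lower bound for $\zeta_k(\chi_a)/k$. The decomposition $\zeta_k(\chi_a)+\zeta_k(1-\chi_a)=k$ is fine, and your iterated packing of $M\setminus B_a$ gives the exact identity
\[
\zeta_k(1-\chi_a)/k \;=\; \sum_{n=1}^{N} N_n\,\zeta_k(\chi_{a_n})/k \;+\; \zeta_k(\chi_{W_N})/k.
\]
The already-established upper bounds control the sum, but you need an \emph{upper} bound on $\zeta_k(\chi_{W_N})/k$ tending to~$0$, and nothing you have produces one. Your ``Darboux tiling lemma''---enclosing an arbitrary small-volume open set $W$ inside a \emph{disjoint} union of Hamiltonian balls of total volume $\le(1+\eta)\mathrm{Vol}(W)$---is not a known result and is almost certainly false as stated: take $W$ to be a thin tubular neighbourhood of a non-displaceable Lagrangian, or even a thin annulus in a Darboux chart; disjoint round balls covering such a set necessarily waste a fixed proportion of volume regardless of their radii. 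Without this step the argument is circular, since every attempt to bound $\zeta_k(\chi_{W_N})$ from above reduces to the very lower bound on $\zeta_k(\chi_{a'})$ you are trying to prove.

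Note that the missing ingredient is exactly what perfectness of $\widetilde{\Ham}(M)$ supplies for free in the paper's argument; if you invoke it, you recover that proof rather than an alternative one.
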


\begin{proof}[Proof of Proposition \ref{prop:idempotent}]  
The first hypothesis states that $QH^*(\Sym^k(M);\mathfrak{b}_k)$ splits, as an algebra, into a direct sum of two factors one of which is the ground field and $e_k$ is the identity in this factor.  The fact that the ground field appears as a summand combined with the proof of Theorem 12.6.1 in \cite{Polterovich-Rosen} implies that for any Hamiltonian $H$ we have the inequality  
$$c_k(H) + c_k(\bar{H}) \leq - \val(e_k).$$  

Let $\mu_k(\varphi_H):=\frac{1}{k}c_k(H)$ for mean-normalized $H$.
According to \cite[Prop.\ 5.1]{usher11}, from the above inequality we can conclude that $$\mu_k: \widetilde{\mathrm{Ham}}(M) \rightarrow \R$$ is a quasimorphism with defect at
 most $ -\frac{1}{k} \val(e_k)$.  
 
 Now, the sequence of quasimorphisms $\mu_k: \widetilde{\mathrm{Ham}}(M) \rightarrow \R$ satisfy the following properties:
 \begin{enumerate}
 \item The $\mu_k$ are pointwise bounded: $\forall g \in \widetilde{\mathrm{Ham}}(M)$ the sequence $\{\vert \mu_k(g) \vert:k \in \mathbb{N}\}$ is bounded by the Hofer Lipschitz property of the spectral invariants $c_k$.
 \item The defects of the $\mu_k$ converge to zero, by the hypothesis $\val(e_k)/k \to 0$ as $k\to \infty$. 
 \end{enumerate}
 
Since $\widetilde{\mathrm{Ham}}$ is perfect, it then follows from Lemma \ref{lem:quasi-converge-hom}, stated below, that the $\mu_k$ converge point-wise to zero, i.e.\ the $c_k$ satisfy the Weyl law for mean-normalized Hamiltonians.  The Shift property of the $c_k$ implies that the Weyl law holds for all Hamiltonians.
\end{proof}

Lemma \ref{lem:quasi-converge-hom} tell us that a pointwise bounded sequence of quasimorphisms, with defects converging to zero, converges to a group homomorphism.  Here is the precise statement.

\begin{lemma}\label{lem:quasi-converge-hom} Let $\mu_k : G \rightarrow \R$ be a sequence of quasimorphisms on a group $G$.
Suppose that 
\begin{enumerate}
\item The $\mu_k$ are point-wise bounded, i.e.\  the sequence $\{ | \mu_k(g)|:k \in \mathbb{N} \}$ is bounded  $\forall g \in G$.
\item The defects of the $\mu_k$ converge to zero.
\item The group $G$ is perfect.
\end{enumerate}
Then, the $\mu_k$ converge point-wise to zero.
\end{lemma}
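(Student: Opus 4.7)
The plan is to use a limit construction: any pointwise accumulation point of the sequence $(\mu_k)$, in the topology of pointwise convergence on $G$, should land in the space of genuine group homomorphisms $G \to \mathbb{R}$, and perfectness will then force it to vanish.

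Concretely, I would fix a non-principal ultrafilter $\mathcal{U}$ on $\mathbb{N}$. Hypothesis (1) guarantees that for each $g \in G$ the sequence $(\mu_k(g))_k$ is bounded, so the ultralimit
$$\mu_\infty(g) := \lim_{k \to \mathcal{U}} \mu_k(g)$$
is a well-defined real number and produces a function $\mu_\infty : G \to \mathbb{R}$. The crucial step is the observation that the quasimorphism inequality $|\mu_k(gh) - \mu_k(g) - \mu_k(h)| \le D(\mu_k)$, combined with hypothesis (2) that $D(\mu_k) \to 0$, forces the ultralimit to satisfy $\mu_\infty(gh) = \mu_\infty(g) + \mu_\infty(h)$ for all $g,h \in G$. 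Thus $\mu_\infty$ is a homomorphism into the abelian group $\mathbb{R}$, so it factors through the abelianization $G/[G,G]$; hypothesis (3) then forces $\mu_\infty \equiv 0$.

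To deduce pointwise convergence $\mu_k(g) \to 0$ for every $g$, I argue by contradiction. If $\mu_k(g_0) \not\to 0$ for some $g_0 \in G$, pointwise boundedness supplies a subsequence $(k_j)$ along which $\mu_{k_j}(g_0) \to L \ne 0$. Choosing a non-principal ultrafilter $\mathcal{U}$ containing the set $\{k_j : j \in \mathbb{N}\}$ then yields $\mu_\infty(g_0) = L \ne 0$, contradicting the vanishing established above. The only technical point worth flagging, and what I would identify as the main (if mild) obstacle, is that $G$ can be uncountable, so one cannot extract a single subsequence converging pointwise on all of $G$ by a naive diagonal argument; the ultrafilter (or, equivalently, a Banach limit on $\ell^\infty(\mathbb{N})$) sidesteps this cleanly and is what makes the otherwise two-line argument go through.
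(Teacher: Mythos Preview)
Your proof is correct and efficient, but the paper takes a different route that avoids ultrafilters (and hence the axiom of choice). In the paper's argument, one fixes $g \in G$, writes it as a product of commutators of finitely many elements $g_1,\dots,g_m$, and then passes to a subsequence along which all of the $\mu_k(g_i^{\pm 1})$ converge. Using the quasimorphism inequality with vanishing defect, the paper shows that along this subsequence $\mu_k$ actually converges on the entire normal subgroup $H$ generated by the $g_i$, and that the limit is a homomorphism $\hat\mu:H\to\mathbb{R}$; since $g\in[H,H]$, one gets $\hat\mu(g)=0$. So both arguments hinge on the same idea---any pointwise limit must be a homomorphism, hence vanishes on commutators---but they handle the ``no global convergent subsequence'' problem differently: you invoke an ultrafilter to produce a limit on all of $G$ at once, while the paper reduces, for each fixed $g$, to a finitely generated situation where an ordinary subsequence suffices. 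Your approach is shorter and more conceptual; the paper's is more elementary in its logical prerequisites and makes explicit that only finitely many generators are ever needed for any given $g$.
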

\begin{proof}
  We will show that for each $g \in G$, the limit 
  $\displaystyle \lim_k \mu_k(g)$ exists and equals zero.

  Fix $g \in G$ and let $\hat{\mu}(g)$ be any limit point of the sequence $\mu_k(g)$; note that $\hat{\mu}(g)$ exists by our assumption on point-wise boundedness of the $\mu_k$.  We will show that $\hat{\mu}(g)$ is zero, which clearly implies that $\displaystyle \lim_k \mu_k(g)$ exists and equals zero.

  Up to passing to a subsequence and using the facts that the defect of $\mu_k$ converges to $0$, we may assume that the sequences $\mu_k(g)$ and $\mu_k(g^{-1})$ converge to $\hat{\mu}(g)$ and $-\hat{\mu}(g)$, respectively.  Now, pick $g_1, \ldots, g_m$ such that $g$ can be written as a product of commutators of the form $[g_i, g_j]$; such $g_i$ exist by the assumption on perfectness of $G$.  The point-wise boundedness assumption on the $\mu_k$ implies that, we can find $k_l \to \infty$ such that all sequences $\mu_{k_l}(g_i)$ and $\mu_{k_l}(g_i^{-1})$ have limits for $i=1, \ldots, m$.  We will denote the limits by $\hat{\mu}(g_i^{\pm 1})$.

  Let $H$ be the normal subgroup in $G$ generated by the $g_i$.  Note that $[g_i, g_j] \in H$ and hence $g \in [H,H]$.

  \begin{claim}\label{cl:limit-exists}
      For every $h \in H$, the limit
      $$ \lim_{k_l \to \infty} \mu_{k_l}(h)$$
      exists.
  \end{claim}

Before proving Claim \ref{cl:limit-exists}, we explain why it implies the conclusion of Lemma \ref{lem:quasi-converge-hom}.  As a consequence of the claim we can define
$$\hat{\mu}: H \to \R$$ to be the point-wise limit of the $\mu_{k_l}$ (on $H$). Observe that we have
\begin{align*}
    |\hat{\mu}(h_1 h_2) -  \hat{\mu}(h_1) - \hat{\mu}(h_2)| &= | \lim_{k_l \to \infty }  \mu_{k_l}(h_1 h_2) -  \mu_{k_l}(h_1) - \mu_{k_l}(h_2) \; | \\
    &\leq \lim_{k_l \to \infty }  \mathrm{def}(\mu_{k_l}) =0,
    \end{align*}
    from which it follows that $\hat{\mu}: H \rightarrow \R $ is a group homomorphism.  Hence, $\hat{\mu}$ vanishes on $[H,H]$ and in particular  $\hat{\mu}(g) = 0$.

  \begin{proof}[Proof of Claim \ref{cl:limit-exists}]
      To simplify the notation within the proof of this claim, we will write $\mu_{k}$ instead of $\mu_{k_l}$
      
      Let $h$ denote an element of $H$.  We can write $$ h = \prod_{j=1}^N \ell_j f_j^{\pm 1} \ell_j^{-1} $$
      where $f_j \in \{g_1, \ldots, g_m\}$.  Note that we have $$\lim_{k \to \infty} \mu_k(f_j) = \hat{\mu}(f_j).$$

      The quasimorphism inequality for $\mu_k$ yields the inequality

      \begin{equation*}
      \sum_{j=1}^N \mu_k(\ell_j f_j^{\pm 1} \ell_j^{-1}) - N \mathrm{def}(\mu_k)  \leq   \mu_k(h) \leq \sum_{j=1}^N \mu_k(\ell_j f_j^{\pm 1} \ell_j^{-1}) + N \mathrm{def}(\mu_k),
      \end{equation*}
where $\mathrm{def}(\mu_k)$ denotes the defect of $\mu_k$.  The quaimorphism inequality further implies that $$ - 4 \, \mathrm{def}(\mu_k) \leq \mu_k(\ell_j f_j^{\pm 1} \ell_j^{-1}) -\mu_k(f_j^{\pm 1}) \leq 4 \, \mathrm{def}(\mu_k).$$
      Combining the previous two inequalities we obtain
      \begin{equation}\label{eqn:quasi-ineq}
      \sum_{j=1}^N \mu_k( f_j^{\pm 1} ) - 5N \mathrm{def}(\mu_k)  \leq   \mu_k(h) \leq \sum_{j=1}^N \mu_k(f_j^{\pm 1} ) + 5N \mathrm{def}(\mu_k).
      \end{equation}

Note that as $k \to \infty$, 
\begin{itemize}
    \item[a.] $5N \mathrm{def}(\mu_k) \to 0$ because $N$ is independent of $k$ and $\mathrm{def}(\mu_k) \to 0$,
    \item[b.]  $\sum_{j=1}^N \mu_k( f_j^{\pm 1} ) \to \sum_{j=1}^N \hat{\mu}(f_j^{\pm 1}) $. 
\end{itemize}

Hence, we conclude from \eqref{eqn:quasi-ineq} that 
$$ \lim_{k \to \infty } \mu_k(h)= \sum_{j=1}^N \hat{\mu}(f_j^{\pm 1}).$$ \end{proof}
This completes the proof of Lemma \ref{lem:quasi-converge-hom}. 
\end{proof}

\subsection{Space filling links}

\emph{We give sufficient conditions on a sequence of Lagrangian links to define, via closed-open maps, a corresponding sequence of symmetric product idempotents satisfying the previously introduced conditions.}

Recall that a Lagrangian link $L \subset M$ of $k$ components is an embedding of $k$ pairwise disjoint Lagrangian submanifolds.  Such an $L$ determines a Lagrangian $\Sym(L) \subset \Sym^k(M)$ which lies away from the diagonal, hence has well-defined orbifold Lagrangian Floer cohomology groups by the theory established in \cite{Cho-Poddar}.  In this setting, there are Hamiltonian spectral invariants $c_k$ associated to any choice of bulk $\mathfrak{b}_k \in H^*(I\Sym^k(M);\Lambda_{>0})$ of positive valuation on the inertia orbifold of $\Sym^k(M)$. 
When $HF^{\mathfrak{b}}(\Sym(L),b)$ is well-defined (i.e.\ when $b$ is a Maurer-Cartan solution with respect to $L$ and $\mathfrak{b}$),  
there are also Lagrangian  spectral invariants $\ell_k$.
There are furthermore (length zero) closed-open and open-closed maps
\[
\mathcal{CO}^0: QH(\Sym^k(M),\mathfrak{b}_k) \to HF(\Sym(L_k),\mathfrak{b}_k,b); \quad \mathcal{OC}^0: HF(\Sym(L_k),\mathfrak{b}_k,b) \to QH(\Sym^k(M),\mathfrak{b}_k)
\]
and their cousins with (co)domain Hamiltonian rather than quantum cohomology, cf.\ Section \ref{sec:orb-Lag-Floer}. We will always focus attention on links in which each component of $L_k$ is a torus, so $\Sym(L_k)$ is also a torus. Its Floer theory is therefore governed by a disc potential function 
\[
W^{\mathfrak{b}_k} = W(\Sym(L_k);\mathfrak{b}_k): H^1(\Sym(L_k);U_{\Lambda}) \to \Lambda
\]
where $U_{\Lambda} = val^{-1}(1)$ denotes the unitary subgroup of the Novikov field.

\begin{prop}\label{prop:links}
Consider a Lagrangian torus link $L \subset M$ of $k$ components. Assume that there is a bulk class $\mathfrak{b}$ such that
\begin{itemize}
\item The function $W^{\mathfrak{b}}$ has a Morse critical point at $b \in H^1(\Sym^k(L);U_{\Lambda})$;
\end{itemize}
Let $Z$ be the determinant of the Hessian of $W^{\mathfrak{b}}$ at the critical point $b$.
Then 
\begin{enumerate}
\item the image of the length zero open-closed map $\mathcal{OC}^0_{\Sym^k(L)}$ has rank one so it contains an indecomposable idempotent $e \in QH^*(\Sym^k(M),\mathfrak{b})$ such that $e \cdot QH^*(\Sym^k(M),\mathfrak{b})=e \cdot \Lambda$;
\item $c(e, -)$ defines a quasi-morphism with defect at most $\val(Z)$;
\item for the unit class $e_L \in HF(\Sym^k(L),\mathfrak{b},b)$, we have $$\ell(e_L,H)+\val(Z) \ge c(e,H) \ge \ell(e_L,H)$$ for all $H$.  Here, $\ell(e_L, \cdot)$ stands for the Lagrangian spectral invariant associated to $e_L$. 
\end{enumerate}
Moreover, the maps induced on the universal cover $\widetilde{\Ham}(M, \omega)$ by the spectral invariants $\ell(e_L, \cdot), c(e, \cdot)$ are quasimorphisms, which coincide up to homogenization. 
\end{prop}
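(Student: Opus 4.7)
The plan is to exploit the Clifford algebra structure of $HF(\Sym^k(L), \mathfrak{b}, b)$ at a Morse critical point of the disc potential, and to transport it via the open-closed and closed-open maps to isolate a rank-one summand of $QH^*(\Sym^k(M), \mathfrak{b})$. Since $\Sym^k(L)$ is a torus and $b$ is a Morse critical point, orbifold Lagrangian Floer theory (Section~\ref{sec:orb-Lag-Floer}, building on \cite{Cho-Poddar} together with Assumption~\ref{a:OCCO}) identifies $HF(\Sym^k(L), \mathfrak{b}, b)$ with the Clifford algebra on $H^1(\Sym^k(L); \Lambda)$ of quadratic form $\mathrm{Hess}_b W^{\mathfrak{b}}$. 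Non-degeneracy of the Hessian (Morse-ness of $b$) makes this a semisimple algebra whose Calabi-Yau discriminant is a unit multiple of $Z$; in particular the critical value $W^{\mathfrak{b}}(b)$ is a simple eigenvalue of quantum multiplication, so the image of $\mathcal{OC}^0_{\Sym^k(L)}$ is at most one-dimensional in $QH^*(\Sym^k(M), \mathfrak{b})$. The Calabi-Yau/Poincar\'e adjunction
\[
\langle \mathcal{OC}^0(\alpha), \beta\rangle_{QH} = \langle \alpha, \mathcal{CO}^0(\beta)\rangle_{HF}
\]
(part of Assumption~\ref{a:OCCO}), together with non-degeneracy of both pairings, forces the image to be exactly one-dimensional, generated by an indecomposable idempotent $e$ with $e \cdot QH^*(\Sym^k(M), \mathfrak{b}) = e \cdot \Lambda$; this is (1).

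For (2), a direct computation in the Clifford algebra shows that the Calabi-Yau trace of $e_L$ has valuation controlled by $\val(Z)$. Transported through $\mathcal{OC}^0$ via the adjunction, this yields $\val(e) \geq -\val(Z)$ on the rank-one ideal $e \cdot \Lambda$. The Entov--Polterovich--Usher quasi-morphism criterion from \cite{usher11} (applied exactly as in the proof of Proposition~\ref{prop:idempotent}) then produces
\[
c(e, H) + c(e, \bar H) \leq -\val(e) \leq \val(Z),
\]
so $c(e, \cdot)$ is a quasi-morphism on $\widetilde{\Ham}(M, \omega)$ of defect at most $\val(Z)$.

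For (3), the lower bound $c(e, H) \geq \ell(e_L, H)$ follows from the Albers/Leclercq--Zapolsky-type inequality $c(x, H) \geq \ell(\mathcal{CO}^0(x), H)$ (valid in the orbifold Lagrangian setting thanks to Assumption~\ref{a:OCCO}) applied to $x = e$, combined with $\mathcal{CO}^0(e) = e_L$; this latter identity holds because $\mathcal{CO}^0$ is a unital algebra map and $e$ is the identity of the critical-value summand housing $\Sym^k(L)$. The upper bound $c(e, H) \leq \ell(e_L, H) + \val(Z)$ is the Calabi-Yau dual inequality, obtained by pairing the Hamiltonian continuation cocycle representing $c(e, H)$ against a class $y \in HF(\Sym^k(L), \mathfrak{b}, b)$ with $\mathrm{tr}(e_L \cdot y)$ of valuation zero; the Clifford computation from the previous paragraph bounds $\val(y)$ by $\val(Z)$, and a pair-of-pants triangle inequality for the module pairing $HF \otimes QH \to HF$ then yields the claim. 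For the final sentence, homotopy invariance lets both $c(e, \cdot)$ and $\ell(e_L, \cdot)$ descend to $\widetilde{\Ham}(M, \omega)$; the bounded difference in (3) implies that $\ell(e_L, \cdot)$ is also a quasi-morphism, and that the two coincide upon homogenization, since a uniformly bounded discrepancy vanishes under $H \mapsto \lim_m c(H^m)/m$.

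The main obstacle is the Calabi-Yau trace computation together with its compatibility with $\mathcal{OC}^0, \mathcal{CO}^0$ in the orbifold setting: identifying the Floer trace on the Clifford algebra $HF(\Sym^k(L), \mathfrak{b}, b)$ with a unit multiple of $Z$, and establishing the adjunction and module triangle inequalities axiomatised in Assumption~\ref{a:OCCO}. Once this structural bookkeeping is in place, the Entov--Polterovich--Usher argument and the Albers-type comparisons between Hamiltonian and Lagrangian spectral invariants proceed along standard lines, mirroring the non-orbifold arguments of \cite{Pol-Shel21}.
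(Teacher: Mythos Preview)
Your overall architecture matches the paper's (Corollaries \ref{c:OCCObound} and \ref{c:LagSpec}): Clifford structure at a Morse critical point, adjunction between $\mathcal{OC}^0$ and $\mathcal{CO}^0$, valuation bound on the idempotent giving the quasi-morphism defect, and Albers-type inequalities for the comparison with Lagrangian spectral invariants. Two places where your sketch diverges from the paper are worth flagging.

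First, your rank-one argument for $\operatorname{im}\mathcal{OC}^0$ via ``$W^{\mathfrak b}(b)$ is a simple eigenvalue of quantum multiplication'' does not go through as stated: nothing here tells you $QH^*_{orb}(\Sym^k M;\mathfrak b)$ is semisimple, so knowing the eigenvalue does not bound the rank of the image. The paper instead argues on the Lagrangian side: since $HH^*(Cl_n)=\Lambda$ is generated by the unit, $\mathcal{CO}^0$ lands in $\Lambda\cdot e_L$; the adjunction $\langle\mathcal{OC}^0(a),y\rangle_Y=\langle a,\mathcal{CO}^0(y)\rangle_L$ then forces $\mathcal{OC}^0(a)=0$ for $a\in\bigoplus_{d<n}H^d(L)$, so $\mathcal{OC}^0$ factors through $\Lambda\cdot vol$. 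Nonvanishing of $\mathcal{OC}^0(vol)$ is the Cardy relation $\langle\mathcal{OC}^0(vol),\mathcal{OC}^0(vol)\rangle_Y=Z\neq 0$. You do invoke the adjunction afterwards, so this is a minor reorganisation, but the eigenvalue sentence should be replaced.

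Second, your upper bound in (3) is where the paper is more concrete. The Cardy relation actually gives an explicit formula $e=\mathcal{OC}^0_L(vol)/Z$ (and $\mathcal{CO}^0_L(e)=e_L$). From this, the valuation bound $\val(e)\ge -\val(Z)$ is immediate since $\mathcal{OC}^0_L$ is valuation non-decreasing, and the upper bound follows from the \emph{dual} Albers inequality $c(\mathcal{OC}^0(a),H)\le \ell(a,H)$ applied with $a=vol/Z$, plus a Lagrangian triangle inequality $\ell(vol/Z,H)\le \ell(e_L/Z,H)+\ell(vol,0)=\ell(e_L,H)+\val(Z)$. Your description via ``pairing the Hamiltonian continuation cocycle against a class $y$ with $\operatorname{tr}(e_L\cdot y)$ of valuation zero'' and a ``module pairing $HF\otimes QH\to HF$'' does not obviously correspond to a standard inequality; the clean route is simply to write down $e=\mathcal{OC}^0(vol)/Z$ and use that $\mathcal{OC}^0_{(L,H)}$ is valuation non-decreasing.
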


Recall that, for $\tilde{\psi} \in  \widetilde{\Ham}(M, \omega)$  are defined $\ell(e_L, \tilde{\psi}), c(e_k, \tilde{\psi})$ are defined to be $\ell(e_L, H), c(e_k, H)$, where $H$ is a mean-normalized such that $\tilde{\psi} = \tilde{\varphi}_H$.

The above result is proved in Section \ref{Sec:Lag Floer} and \ref{s:LagSpec}  (see Corollary \ref{c:OCCObound} and \ref{c:LagSpec}).
Readers should compare Proposition \ref{prop:links} with similar results proved in \cite[Chapter 21]{FOOOspectral} in the toric case.

\begin{corol}\label{c:links}
Consider a sequence $L_k \subset M$ of links of $k$ components, for $k=1,2,\ldots$. Assume that there are bulk classes $\mathfrak{b}_k$ such that
\begin{enumerate}
\item The function $W^{\mathfrak{b}_k}$ has an isolated non-degenerate critical point at $b_k \in H^1(\Sym^k(L_k);U_{\Lambda})$;
\item The valuation of the determinant of the Hessian of $W^{\mathfrak{b}_k}$ at the critical point $b_k$ grows sublinearly with $k$.
\end{enumerate}
Then, the image of the length zero open-closed map $\mathcal{OC}^0_{\Sym{L}_k}$ has rank one and contains  a unique idempotent $e_k \in QH^*(\Sym^k(M),\mathfrak{b}_k)$, and the  $\{e_k\}$ satisfy the hypotheses of Proposition \ref{prop:idempotent}.  Hence, the associated spectral invariants satisfy the Weyl law.
\end{corol}

\begin{remark}
The links that satisfy the hypotheses of Proposition \ref{prop:links} necessarily have to become {\it space filling} in the sense that the union $\cup_k L_k$ is a dense subset of $M$.  Indeed, otherwise one could violate the Weyl law by thinking about perturbations of a Hamiltonian supported in the complement of the union of the $L_k$ and the `Lagrangian control' property of spectral invariants.
\end{remark}

\begin{remark}\label{rem:quasi-morphisms}
    Interesting examples of Lagrangian links satisfying the hypothesis of Proposition \ref{prop:links} include links whose components are suitably chosen toric fibers in toric four manifolds, such as those appearing in \cite{MS19, Pol-Shel21}.  As a consequence of Proposition \ref{prop:links}, we see that the spectral invariants associated to such links are quasimorphisms.
    
     For a concrete example, one can take $(S^2\times S^2,\omega_{2B+C} + \omega_{2a})$, where the subscripts denote area, and the link comprising the product of $k$ circles in the first factor dividing the sphere into $k-1$ annuli of area $\frac{C}{k-1}$ and two discs of area $B$, with the equator in the second factor.  When $0<a < B- \frac{C}{k-1}$ the link is non-displaceable, and for suitable bulk its disc potential has a non-degenerate critical point; this is proven in \cite{MS19}, for $k=2$ and in \cite{Pol-Shel21} for $k>2$.

    This implies, in particular, that the invariants $c_{k,B}, \mu_{k,B}$, introduced in \cite{Pol-Shel21} are quasimorphisms. It can be shown that these differ from previously constructed\cite{Entov-Polterovich09} examples of quasimorphisms on $(S^2\times S^2,\omega_{2B+C} + \omega_{2a})$.
\end{remark}

\subsection{Weyl Lawlessness} 

\emph{We show that, if one does not non-trivially bulk-deform Floer cohomology, no sequence of  idempotents in the quantum cohomologies of symmetric products on $S^2$ satisfy the Weyl law. We are therefore obliged, in the sequel, to develop orbifold Hamiltonian Floer theory with bulk insertions.}

Proposition \ref{prop:links} reduces establishing the Weyl law to exhibiting a sequence of Lagrangian links of tori with certain properties. It is reasonable to ask why one requires Proposition \ref{prop:links} and not just Proposition \ref{prop:idempotent}. The point is that the existence of the idempotents required by the latter relies very delicately on choosing the correct bulk deformation classes $\mathfrak{b}_k$. This is illustrated by the following discussion in the simplest case.

\begin{prop}
If $M = \bP^1$ and each bulk class $\mathfrak{b}_k=0$, then there is no sequence of idempotents $e_k \in QH^*_{orb}(\Sym^k(\bP^1))$ which satisfy the two conditions of  Proposition \ref{prop:idempotent}.
\end{prop}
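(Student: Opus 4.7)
The strategy is to show that every rank-one idempotent $e_k\in QH^*_{\mathrm{orb}}(\Sym^k(\mathbb{P}^1);0)$ has valuation bounded above by $-k\omega/2$ (where $\omega=\omega(\mathbb{P}^1)$), which yields $\val(e_k)/k\leq -\omega/2<0$ and hence contradicts the sub-linearity condition in Proposition~\ref{prop:idempotent}.

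Starting from $QH^*(\mathbb{P}^1)=\Lambda[h]/(h^2-T^\omega)$, and working over an extension of the Novikov field containing $T^{\omega/2}$ (without which $QH^*(\mathbb{P}^1)$ is itself a field with no non-trivial idempotent, making the claim trivially true for $k=1$), the two primitive idempotents $e_\pm=\tfrac12(1\pm hT^{-\omega/2})$ each have valuation $-\omega/2$ in the cohomological basis $\{1,h\}$. Via the embedding of the untwisted sector $(QH^*(\mathbb{P}^1)^{\otimes k})^{S_k}\hookrightarrow QH^*_{\mathrm{orb}}(\Sym^k(\mathbb{P}^1);0)$, the $k+1$ primitive idempotents $e_j$ obtained by symmetrising $e_+^{\otimes j}\otimes e_-^{\otimes(k-j)}$ have valuation $-k\omega/2$ in the cohomological basis of $\Sym^k(\mathbb{P}^1)$; this can be verified by direct expansion (e.g.\ for $k=2$, the symmetrisation of $e_+\otimes e_-$ equals $\tfrac12 - \tfrac12 T^{-\omega}\,h\otimes h$, of valuation $-\omega$).

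To promote this to a bound for \emph{every} rank-one idempotent I would apply a Lagrange-type interpolation argument. Introduce the class $H=p_1(h_1,\dots,h_k)\in QH^*_{\mathrm{orb}}(\Sym^k(\mathbb{P}^1);0)$ descending the sum of hyperplane classes, and for any character $\phi:QH^*_{\mathrm{orb}}\to\Lambda$ corresponding to a rank-one summand, set $\mu=\phi(H)$ and use
\[
e_k=\prod_{\nu\in\Sigma}\frac{H-\nu}{\mu-\nu},
\]
where $\Sigma$ runs over eigenvalues of $H$ on the remaining rank-one summands. The $k+1$ untwisted eigenvalues $(2j-k)T^{\omega/2}$ are mutually distinct with pairwise differences of valuation $\omega/2$, so the denominator has valuation $\geq k\omega/2$, while the numerator has valuation $\leq 0$ (the leading $H^k$ term is an integer combination of products $h_{i_1}\cdots h_{i_k}$, all of valuation $0$).

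The main obstacle is controlling the contribution of the twisted sectors: one must verify that no additional rank-one summand, arising from twisted-sector classes, produces an eigenvalue of $H$ that is closer than valuation $\omega/2$ to $\mu$ or escapes the Lagrange estimate. The natural way to close this gap is to observe that twisted-sector classes in $H^*_{\mathrm{orb}}(\Sym^k(\mathbb{P}^1))$ are classically nilpotent with positive Chen-Ruan degree, so with zero bulk the orbifold quantum corrections cannot turn them into unit-like elements without Novikov factors of positive valuation; any twisted-sector rank-one summand must therefore still yield an eigenvalue of $H$ of valuation $\geq \omega/2$, and the Lagrange estimate continues to give $\val(e_k)\leq -k\omega/2$. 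Making this last point fully rigorous requires either a direct Chen-Ruan computation of the orbifold quantum product on $\Sym^k(\mathbb{P}^1)$ with zero bulk, or an abstract argument using a filtration on $QH^*_{\mathrm{orb}}$ preserved by the zero-bulk product that isolates the valuation contribution of the twisted sectors.
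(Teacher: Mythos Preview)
Your target bound $\val(e_k)\leq -k\omega/2$ is exactly what the paper establishes, and your analysis of the untwisted-sector idempotents is correct. But the gap you flag is genuine, and the Lagrange-interpolation route does not close without the very computation you defer. Two concrete problems: (i) the formula $\prod_{\nu\neq\mu}(H-\nu)/(\mu-\nu)$ equals $e_k$ only when the $H$-eigenvalue $\mu$ is simple on all of $QH^*_{\mathrm{orb}}$, and you have no way to rule out twisted-sector summands sharing $\mu$; (ii) your numerator estimate ``the leading $H^k$ term has valuation $0$'' presumes the product has exactly $k$ factors, i.e.\ that no twisted eigenvalues enter --- once they do, the degree of the product exceeds $k$, every monomial in $H^N$ for $N>k$ contains some $h_i^2=T^\omega$, so $\val(H^N)>0$ and your numerator bound is lost.

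The paper bypasses the twisted sectors by two devices you do not use. First, when $\mathfrak{b}_k=0$ the sector grading is preserved by the quantum product (a pair of pants with two trivial monodromies has trivial third monodromy), so the \emph{restriction} $e_k^{(0)}$ of $e_k$ to the trivial sector is itself a simultaneous eigenvector for multiplication by every trivial-sector class. Second, monotonicity of $(\mathbb{P}^1)^k$ gives a $\mathbb{Q}$-grading on the orbifold quantum product in which any idempotent sits in degree $0$; hence $e_k^{(0)}$ is a $\mathbb{C}$-multiple (not merely a $\Lambda$-multiple) of one of your untwisted idempotents, and $\val(e_k^{(0)})=-k\omega/2$ exactly. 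It then remains only to check $e_k^{(0)}\neq 0$, which the paper does by noting that if $e_k^{(g)}\neq 0$ for some $g$, then $1_{g^{-1}}\ast e_k\in\Lambda e_k$ has nonzero trivial-sector component coming from the classical product $1_{g^{-1}}\ast e_k^{(g)}$. The bound $\val(e_k)\leq\val(e_k^{(0)})$ is then immediate from the sector decomposition.
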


\begin{proof} 
Let $A_k = QH^*_{orb}(\Sym^k(\bP^1); \Lambda)$ where $\Lambda$ is the one-variable Novikov field with parameter $q$. We claim there is no sequence of  elements $e_k \in A_k$ for which 
\[
e_k \ast A_k = \Lambda\cdot e_k, \qquad \lim_k \, \val(e_k) / k = 0.
\]

An important feature when $\mathfrak{b}_k=0$ is that we can define a $\mathbb{Q}$-grading.
Indeed, Chen-Ruan prove that $H^*_{orb}(\Sym^k(\bP^1))$ is $\mathbb{Z}$-graded and the algebra structure respects the grading.
In other words, the moduli of constant spheres with $3$ (possibly orbifold) marked points ($2$ inputs and $1$ output) has vitual dimension given by the grading of the output minus the sum of the grading of the inputs.
When we consider the quantum product in the case $\mathfrak{b}_k=0$, we only need to consider the moduli of $J$-holomorphic spheres with  $3$ (possibly orbifold) marked points.
The virtual dimension of such $u$ in class $\beta$ is given by
\[
2\left(\langle c_1(TY),\beta\rangle + (\dim_{\bC}(Y)-3)(1-g) + 3 - \sum_{i=1}^3 a(u,r_i)\right)
\]
(see \cite{MSS}). 
It implies that the output has grading given by the sum of the gradings of the input minus $2\langle c_1(TY),\beta\rangle$.
Since $c_1(TY)$ is descended from $c_1(TX)$ and $\omega_Y$ is descended from $\omega_X$, the fact that $X$ is monotone implies that 
$c_1(TY)$ is proportional to $\omega_Y$ as well (with the same monotonicity constant).
In other words, we can define the grading of $q^{a\omega(\mathbb{P}^1)}$ to be $-4a$.
It gives a $\mathbb{Q}$-grading on $A_k$ and the product structure respects it.
As an idempotent, the grading of $e_k$ has to be $0$.

The condition $e_k \ast A_k = \Lambda\cdot e_k$ says that $e_k \ast h \in \Lambda e_k$ for every $h$, so $e_k$ is a simultaneous eigenvector for quantum multiplication by all elements $h\in A_k$. Now let $e_k^{(0)}$ denote the restriction of $e_k$ to the trivial sector  $A_k^{(\id)} \subset A_k$.  If we take $h$ in the trivial sector, then (since the bulk $b_k=0$) the output $e_k^{(0)} \ast h \in A_k^{(\id)}$ also belongs to the trivial sector, because pairs of pants contributing to the product have trivial monodromy at two boundaries and hence at the third (i.e. we are using the $\Gamma = \Sym_k$-grading). Thus, $e_k^{(0)}$ is a common eigenvector for quantum multiplication by elements $h\in A_k^{(\id)}$.

We completely understand $A_k^{(\id)}$ because the product structure only involves counting smooth $J$-holomorphic spheres.
Indeed, we have
\[
A_k^{(\id)} \cong QH^*((\bP^1)^k)^{\Sym_k}=(QH^*((\bP^1))^{\otimes k})^{\Sym_k}=((\Lambda[1_{\bP^1},H]/(H^2-q^{\omega(\bP^1)}))^{\otimes k})^{\Sym_k}
\]
 is exactly the ring of invariants in quantum cohomology of the product.
Therefore, $A_k^{(\id)}$ is semi-simple.
The indecomposable idempotents of $A_k^{(\id)}$ are given by tensor products of the indecomposable idempotents of 
$\Lambda[1_{\bP^1},H]/(H^2-q^{\omega(\bP^1)})$, namely, $\frac{1}{2}(1+q^{-\omega(\bP^1)/2}H)$ and $\frac{1}{2}(1-q^{-\omega(\bP^1)/2}H)$.
Note that, the valuation of any indecomposable idempotent of $A_k^{(\id)}$ is $-k\omega(\bP^1)/2$.

Since $e_k^{(0)}$ is a common eigenvector for quantum multiplication by elements $h\in A_k^{(\id)}$, $e_k^{(0)}$ is a multiple of an idempotent.
Using that the grading of $e_k^{(0)}$ is $0$, we know that $e_k^{(0)}$ is an idempotent times an element in $\mathbb{C}$.
Therefore, as long as $e_k^{(0)} \neq 0$, we have
\[
\val(e_k) \le \val(e_k^{(0)}) = -k\omega(\bP^1)/2
\]
As a result, the condition that $val(e_k)/k$ converges to $0$ cannot be satisfied.

It remains to see that $e_k^{(0)} \neq 0$ if $e_k \neq 0$. Let $e_k^{(g)}$ denote the restriction of $e_k$ to the sector labelled by $g\neq \id$ in the symmetric group, and $1_{g^{-1}}$ denote the unit in the sector labelled by $g^{-1}$.
Since $e_k \neq 0$, there is $g$ such that $e_k^{(g)} \neq 0$.
Note that $1_{g^{-1}} \ast e_k^{(g)}|_{A_k^{(0)}}  \neq 0$ because the classical contribution is non-zero and $1_{g^{-1}} \ast e_k^{(h)}|_{A_k^{(0)}}  = 0$ if $(h) \neq (g)$.
Therefore, we have
\[
0 \neq 1_{g^{-1}} \ast e_k^{(g)}|_{A_k^{(0)}}  = (1_{g^{-1}} \ast e_k)|_{A_k^{(0)}} 
\]
and the final term would vanish if $e_k^{(0)} = 0$.  
This completes the proof.
 \end{proof}

\begin{remark}
    The structure of the orbifold quantum cohomology $QH^*(\Sym^k(S^2))$ does not seem to have been determined in the literature. For $k=2$ one can check by hand that it is semisimple and it seems likely that holds in general (the $I$-function with a few insertions was computed in \cite{Silversmith} subject to some conjectural complicated combinatorial identities). 
\end{remark}

The preceding result indicates that it is not sufficient to work with bulk parameter zero, and `continuity results' in Floer cohomology then suggest that one should not expect the required idempotents to exist in a Zariski open subset of possible bulk classes.  We will  infer the existence of suitable idempotents from appropriate Lagrangian links, via Proposition \ref{prop:links}; in this case, the required bulk class is `picked out' (by an order-by-order induction in the adic filtration on the Novikov ring) by the requirement that Floer cohomology of the link is non-vanishing.

\subsection{The Weyl law on $S^2$}\label{sec:Weyl S2}

\emph{We show that the sequence of links given by taking more and more parallel circles on $S^2$ satisfy all our required conditions. This yields a new proof of the Weyl law, and hence the smooth closing lemma for area-preserving diffeomorphisms, in this case.}

Let $L $ be a Lagrangian link on $S^2$ consisting of $k$ parallel circles which is $\eta$-monotone in the sense of \cite{CGHMSS} for some $\eta > 0$.

Thus, we require that the area of the annuli components in $S^2 \setminus L$ are the same, denoted by $A_k$, and the area of the two disc components are the same, denoted by $B_k$.
We further require that $A_k<B_k$. Let $[T]$ be the cohomological unit of the transposition sector in $I\Sym^k(S^2)$ and $\mathfrak{b}:=c[T] \in H(IY)=H_{orb}(Y)$.

\begin{lemma}[cf. \cite{MS19, Pol-Shel21}]\label{l:parallelLink}
There is a $c \in \Lambda_{>0}$ with $\val(c)=\frac{B_k-A_k}{2}$ such that $\Sym^k(L)$ is weakly unobstructed and the bulk-deformed potential function $W^{\mathfrak{b}}$ of $\Sym^k(L)$ has a non-degenerate critical point.
\end{lemma}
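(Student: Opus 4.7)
The plan is to compute the bulk-deformed disc potential $W^{\mathfrak{b}}$ of $\Sym^k(L)$ via the tautological correspondence, and to locate a non-degenerate unitary critical point by an adic Newton iteration; the valuation $\val(c) = (B_k-A_k)/2$ will be forced by the balancing of classical and bulk-deformed contributions.

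Numbering the circles $L_1,\dots,L_k$ from north to south, the smooth Maslov-$2$ discs in $\Sym^k(S^2)$ with boundary on $\Sym^k(L)$ correspond, by the tautological correspondence, to a single Maslov-$2$ disc bounded by some $L_i$ (of area $B_k + (i-1)A_k$ or $B_k + (k-i)A_k$) together with constant factors on the other $L_j$, yielding
\begin{align*}
W_0(z_1,\dots,z_k) \;=\; \sum_{i=1}^k \bigl(T^{B_k+(i-1)A_k} z_i + T^{B_k+(k-i)A_k} z_i^{-1}\bigr),
\end{align*}
where the $z_i \in U_\Lambda$ are holonomy coordinates on $H^1(\Sym^k(L);U_\Lambda)$. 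Orbidiscs carrying a single transposition orbifold marked point lift by Riemann--Hurwitz to a $k$-sheeted branched cover $\tilde D$ with $\chi(\tilde D) = k-1$, which splits into $k-2$ trivial sheets plus one branched double cover of $D$; at Maslov index $2$ the branched component is the swap quotient of an annulus between an adjacent pair $L_i, L_{i+1}$, of area $A_k/2$ in $\Sym^2(S^2)$, and is weighted by $c$. This gives
\begin{align*}
W^{\mathfrak{b}}(z_1,\dots,z_k) \;=\; W_0 \;+\; c\,T^{A_k/2}\sum_{i=1}^{k-1} P_i(z_i,z_{i+1}) \;+\; O(c^2)
\end{align*}
for explicit Laurent polynomials $P_i$ coupling adjacent holonomies (essentially $z_i z_{i+1}^{-1} + z_i^{-1} z_{i+1}$, up to sign).

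Next I would solve $\partial W^{\mathfrak{b}}/\partial z_i = 0$ on the unitary torus. With the ansatz $z_i \in U_\Lambda$, requiring the classical terms (whose leading valuation is $B_k$) to be balanced against the bulk coupling (of valuation $\val(c) + A_k/2$) forces $\val(c) = (B_k - A_k)/2$. With this choice, the leading-order critical point equations reduce to a tridiagonal linear system in the leading coefficients of $\log z_i$, whose non-zero determinant is the content of the calculations in \cite{MS19} (for $k=2$) and \cite{Pol-Shel21} (for $k \geq 3$). An adic Newton iteration then upgrades the leading-order solution to an exact unitary critical point $b \in H^1(\Sym^k(L); U_\Lambda)$, and non-degeneracy of the leading Hessian transfers to non-degeneracy of $\Hess W^{\mathfrak{b}}(b)$. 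The existence of such a critical point implies that the weak Maurer--Cartan obstruction vanishes, so $\Sym^k(L)$ is weakly unobstructed with bounding cochain $b$.

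The main obstacle will be the honest enumeration of the orbifold disc contributions and the sign bookkeeping needed to identify the leading-order bulk coupling as the tridiagonal system described above; both of these steps closely parallel the toric-surface calculations of \cite{MS19, Pol-Shel21}, and transfer with only minor modifications to the orbifold Lagrangian Floer framework axiomatized in Section \ref{Sec:Lag Floer}.
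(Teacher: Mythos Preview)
There is a genuine gap in your enumeration of orbifold disc contributions. Orbidiscs with a \emph{single} transposition orbifold marked point do not exist with boundary on $\Sym^k(L)$: the monodromy around $\partial D$ is the product of the local monodromies at interior orbifold points, so with $h=1$ it would be a single transposition $(i\ j)$. Then the preimage of $\partial D$ in the $k$-sheeted branched cover $\tilde\Sigma$ has only $k-1$ components, and the connected circle covering sheets $i$ and $j$ would have to map continuously into the disjoint union $L_i \sqcup L_j$, which is impossible. The lowest-order bulk contribution therefore comes from $h=2$ (two transposition branch points $(j\ j{+}1)$), so that $\tilde\Sigma$ contains one genuine annulus between $L_j$ and $L_{j+1}$ of area $A_k$ and is weighted by $c^2T^{A_k}$, not $c\,T^{A_k/2}$. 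Balancing $B_k = 2\val(c)+A_k$ then yields $\val(c)=(B_k-A_k)/2$; note that your own equation $B_k=\val(c)+A_k/2$ would give $\val(c)=B_k-A_k/2$, which is not the stated value.

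Your argument for weak unobstructedness is also circular: the potential $W^{\mathfrak{b}}$ is only defined once one knows that $m_0^{\mathfrak{b},b}$ is a multiple of the unit, so ``the existence of a critical point implies the obstruction vanishes'' puts the cart before the horse. The paper establishes unobstructedness first, by a virtual-dimension count: contributions to the obstruction require $\mu(v)+h\le 1$, but $\mu(v)\le 1$ forces $[v]$ to be a non-negative combination of annular classes (by positivity of intersection), and any non-constant such $v$ has a non-disc domain component, which in turn forces $h\ge 2$ by the Riemann--Hurwitz/monodromy argument above. Once unobstructedness is in hand, the potential computation and the Newton-iteration step proceed essentially as you outline.
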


\begin{proof}
Let $u$ be a representable holomorphic orbifold disc in $\Sym^k(S^2)$ with boundary on $\Sym^k(L)$ such that all the orbifold marked points are mapped to the transposition sector. Let $h$ be the number of orbifold marked points.
Let $v:\Sigma \to S^2$ be the associated map under the tautological correspondence.
The virtual dimension of $u$ is given by $(k-3)+ \mu(v)+h$, where $\mu(v)$ is the Maslov index of $v$ (see \cite{MS19}).
It can contribute to the obstruction only if $(k-3)+ \mu(v)+h \le k-2$, equivalently $\mu(v)+h \le 1$.
Since the Maslov index of the disc components and annuli components in $S^2 \setminus L$ are $2$ and $0$, respectively, and the positivity of intersection guarantees that the relative homology class $[v]$ is a non-negative linear combination of  these components, the only case where $\mu(v) \le 1$ is $[v]$ being a linear combination of the annular components.
It implies that, if $v$ is not a constant map, then at least one of the components of its domain is not a disc.
However, since $\Sigma$ comes with a $k$-fold branch cover to the disc by the tautological correspondence, having one non-disc component implies that $h$ is at least $2$.
Therefore, no $u$ can contribute to the obstruction and $\Sym^k(L)$ is weakly unobstructed.

To compute the potential, we only need to look at those $u$ such that $(k-3)+ \mu(v)+h=k-1$, equivalently, $\mu(v)=2$ and $h=0$, or $\mu(v)=0$ and $h=2$.
The case that $\mu(v)=2$ and $h=0$ means that the $k$-fold covering from $\Sigma$ to the disc is unramified and hence $\Sigma$ is a disjoint union of $k$ discs.
 The lowest order contributions are given by the two disc components, contributing  the terms $T^{B_k}z_1$ and $T^{B_k}\frac{1}{z_k}$ respectively.

The case that $\mu(v)=0$ and $h=2$ means that the $k$-fold branched covering from $\Sigma$ to the disc has exactly two branched points and the image of $v$ does not intersect the two disc components of $S^2 \setminus L$.
  The lowest order contribution are given by each annular component, contributing  the terms $c^2T^{A_k}(\frac{1}{z_j}+z_{j+1})$ for $j=1,\dots,k-1$.

It is routine to check that we can find $c$ with $\val(c)=\frac{B_k-A_k}{2}$ such that $W^{\mathfrak{b}}$ has a non-degenerate critical point (cf. \cite{MS19}).
\end{proof}

\begin{corol}\label{cor:Weyl-S2}
    The Weyl law for $S^2$ holds. 
\end{corol}

\begin{proof}
We now consider Lemma \ref{l:parallelLink} for the  sequence of links $L_k$ comprising $k$ parallel circles, with $k\to \infty$.  
    By construction, the determinant of the Hessian of the disc potential function has valuation $kB_k$. This does grow sublinearly with $k$, since $B_k \to 0$ as $k\to\infty$.  Thus, the links satisfy the conclusions of Proposition \ref{prop:links}. 
    \end{proof}

This completes the proof of the Weyl law and hence the smooth closing lemma on $S^2$. 

\begin{remark}
It is hard to find Lagrangian links with more than one component which  satisfy Proposition \ref{prop:links} in higher dimensions.
For example, suppose that $M=(S^2 \times S^2, \omega+t\omega)$ and $L_k$ is a disjoint union of product Lagrangians $\{C_{1j} \times C_{2j}: j=1,\dots,k\}$. Say $C_{11}$ bounds the smallest area disc (with area $a$) among all $C_{ij}$.
For the potential function of $\Sym(L_k)$ to have a critical point in $ H^1(\Sym^k(L);U_{\Lambda})$, we need the truncation of it by order greater than $T^a$ to have a critical point. However, for  Maslov index reasons, only the disc(s) bounded by $C_{11}$ can contribute to the potential with order at most $T^a$. Therefore, for it to have a critical point, we need $C_{11}$ to bound two discs of equal area. In particular $C_{11}$ has to be Hamiltonian isotopic to the equator in the $S^2$ factor with smaller area.
Therefore, even if $L_k$ satisfies Proposition \ref{prop:links} (these kinds of links exist by \cite{MS19, Pol-Shel21}), we cannot find a sequence of them whose union is dense in $M$.
\end{remark}

\subsection{Periodic orbits meeting displaceable open sets}

\emph{In this section we explain how to construct periodic orbits near certain displaceable open sets, as a further application of orbifold Hamiltonian spectral invariants.}

Consider a Lagrangian link $L \subset M$ of $k$ components satisfying the hypotheses of Proposition \ref{prop:links}, meaning we assume the existence of a bulk class $\mathfrak{b}$ such that the function $W^{\mathfrak{b}}$ has an isolated non-degenerate critical point at $b \in H^1(\Sym(L);U_{\Lambda})$.  As in Proposition \ref{prop:links}, let $Z$ be the determinant of the Hessian of $W^{\mathfrak{b}}$ at the critical point $b$.

Let $T_1, \ldots, T_k$ denote the components of the link $L$.
\begin{theo}\label{t:links and orbits}
Let  $G: [0,1] \times M \rightarrow \R$ be a smooth Hamiltonian such that $G|_{T_1} > \val(Z)$ and $G=0$ on a neighborhood of the link components $T_2, \ldots, T_k$.  Then, for any Hamiltonian diffeomorphism $\psi$, there exists $s\in [0,1]$ such that the composition $\psi \varphi^s_{G}$  has a periodic point of period at most $k$ passing through the support of $G$.
\end{theo}
We remark here on some consequences of this result before proceeding to prove it.
\begin{remark}\label{rem:links and orbits}
    \begin{itemize}
         \item Interesting examples where one can apply the above are Lagrangian links consisting of suitable toric fibers in toric four manifolds, such as those appearing in \cite{MS19, Pol-Shel21}. (For a concrete example, one can take $(S^2\times S^2,\omega_{2B+C} + \omega_{2a})$, where the subscripts denote area, and the link comprising the product of two circles in the first factor dividing the sphere into an annulus of area $C$ and two discs of area $B$, with the equator in the second factor.  When $0<a < B-C$ the link is non-displaceable, and for suitable bulk its disc potential has a non-degenerate critical point.)  In these cases, the individual components of the links are displaceable and, as far as we can see, a periodic point created via a perturbation supported near an individual link component is not detectable via ordinary Floer homology.
         \item One can deduce the smooth closing lemma for Hamiltonian diffeomorphisms of $S^2$, by applying the above to the Lagrangian links considered in Section \ref{sec:Weyl S2}, which consist of parallel circles.  This yields a proof of the closing lemma which detects variation of spectral invariants through a mechanism other than the Weyl law.   Edtmair and Hutchings \cite{EH21} offer a different proof of the closing lemma on the $2$-torus which does not explicitly use the Weyl law.
    \end{itemize}
\end{remark}

\begin{proof}[Proof of Theorem \ref{t:links and orbits}]
Fix a Hamiltonian diffeomorphism $\psi$ of $M$ and let $H$ be a generating Hamiltonian for $\psi$, i.e.\ $\psi = \varphi_H$.  Let $e_L \in HF(\Sym(L_k),\mathfrak{b}_k,b)$  and $e_k \in QH^*(\Sym^k(M), \mathfrak{b})$ be as in the conclusion of Proposition \ref{prop:links}.  Then, according to the proposition, we have the following inequalities
\begin{align}
\ell(e_L,H) & \le c(e_k,H) \le  \ell(e_L,H) + \val(Z), \label{ineq1}  \\
\ell(e_L, H \# G) &\le c(e_k, H \# G) \le  \ell(e_L, H \# G) + \val(Z). \label{ineq2}
\end{align}

\begin{claim}\label{cl:shift-spec}
$\ell(e_L, H) + \val(Z) < \ell(e_L, H \# G).$   
\end{claim}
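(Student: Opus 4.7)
The plan is to reduce the claim to a pointwise estimate on the Lagrangian $\Sym(L) \subset \Sym^k(M)$. The key point is a Lagrangian control property (a Hofer--Lipschitz bound restricted to the Lagrangian itself) for the Lagrangian spectral invariant $\ell(e_L,\cdot)$ associated to the unit class, which should turn the hypothesis $G|_{T_1}>\val(Z)$ into the desired strict shift $\ell(e_L,H\# G)-\ell(e_L,H)>\val(Z)$.

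The first step is the algebraic identity $\Sym^k(H\# G)=\Sym^k(H)\# \Sym^k(G)$, which holds because $\Sym^k$ intertwines composition of Hamiltonian flows. The next step is to compute $\Sym^k(G)|_{\Sym(L)}$: a point of $\Sym(L)$ has a (unique up to the $\Sigma_k$-action) representative $(x_1,\dots,x_k)$ with $x_i\in T_i$, so
\[
\Sym^k(G)_t(x_1,\dots,x_k)=\sum_{i=1}^k G_t(x_i)=G_t(x_1),
\]
the last equality because $G_t\equiv 0$ near $T_2,\dots,T_k$. The hypothesis $G|_{T_1}>\val(Z)$ then yields some $\delta>0$ with $\Sym^k(G)_t|_{\Sym(L)}\ge \val(Z)+\delta$ for all $t\in[0,1]$.

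To finish I would apply the Lagrangian Hofer--Lipschitz estimate for $\ell(e_L,\cdot)$,
\[
\int_0^1\min_{\Sym(L)}\bigl(F_t-F'_t\bigr)\,dt\;\le\;\ell(e_L,F)-\ell(e_L,F')\;\le\;\int_0^1\max_{\Sym(L)}\bigl(F_t-F'_t\bigr)\,dt,
\]
to $F=\Sym^k(H\# G)$ and $F'=\Sym^k(H)$, noting that their difference agrees with $\Sym^k(G)$ at any point (in particular on $\Sym(L)$). The pointwise lower bound from the previous step then gives
\[
\ell(e_L,H\# G)-\ell(e_L,H)\;\ge\;\int_0^1 \min_{\Sym(L)}\Sym^k(G)_t\,dt\;\ge\;\val(Z)+\delta\;>\;\val(Z),
\]
as required.

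The main obstacle is the justification of the Lagrangian-restricted Hofer--Lipschitz inequality in our orbifold framework: the corresponding ambient estimate (minimum taken over all of $\Sym^k(M)$) is too weak, since we have no positivity of $G$ away from $L$. In the classical smooth setting this Lagrangian-control bound is a well-known consequence of the action-variational formula for Lagrangian spectral invariants of the unit class (cf.\ Leclercq--Zapolsky), and the same proof should transport to our orbifold setup via the axiomatic framework summarised in Section \ref{Sec:Lag Floer}. If needed, one can instead argue by differentiating the path $s\mapsto \ell(e_L,H\#(sG))$ and expressing $\frac{d}{ds}\ell(e_L,H\#(sG))=\int_0^1 \Sym^k(G)(\gamma_s(t))\,dt$ for an action-realising chord $\gamma_s$ whose endpoints lie on $\Sym(L)$, then using a standard limit/continuity argument together with the pointwise bound above to deduce the integrated estimate.
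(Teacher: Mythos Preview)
Your argument has a genuine gap at the step where you evaluate the ``pointwise difference'' on $\Sym(L)$. Recall that $(H\# G)_t(x)=H_t(x)+G_t\bigl((\varphi_H^t)^{-1}(x)\bigr)$, so
\[
\Sym^k(H\# G)_t - \Sym^k(H)_t \;=\; \Sym^k(G)_t\circ(\varphi_{\Sym^k(H)}^t)^{-1},
\]
not $\Sym^k(G)_t$. Restricting to $\Sym(L)$ therefore evaluates $\Sym^k(G)$ on $(\varphi_{\Sym^k(H)}^t)^{-1}(\Sym(L))$, about which you have no information. More fundamentally, the two-Hamiltonian ``Lagrangian Hofer--Lipschitz'' inequality you invoke,
\[
\ell(e_L,F)-\ell(e_L,F')\;\ge\;\int_0^1\min_{\Sym(L)}(F_t-F'_t)\,dt,
\]
is \emph{not} the standard Lagrangian control statement (which is the special case $F'=0$), and combining triangle inequality with Lagrangian control only yields a bound involving $\varphi_F^t(\Sym(L))$ rather than $\Sym(L)$. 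Your fallback differentiation argument has the same defect: the derivative of $s\mapsto\ell(e_L,H\#(sG))$ is $\int_0^1 G_t\bigl(\varphi_{sG}^t(\gamma_s(0))\bigr)\,dt$ for a chord $\gamma_s$ with $\gamma_s(0)\in L$, so you must evaluate $G$ along the $sG$-flow of a point of $T_1$. Since $G$ is not constant on $T_1$, this flow does not preserve $T_1$, and $G$ may be arbitrarily small at such points (it vanishes at the boundary of its support).

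The paper's proof repairs exactly this by first squeezing an auxiliary $F\le G$ underneath $G$ with $F\equiv C>\val(Z)$ \emph{constant} on a neighbourhood of $T_1$ and $F\equiv 0$ near the other components. Constancy forces $\varphi_{sF}^t$ to fix the link pointwise, so the Hamiltonian chords of $\Sym^k(H\# sF)$ and $\Sym^k(H)$ from $\Sym(L)$ to itself coincide, with actions shifted by exactly $sC$; spectrality plus continuity then gives the identity $\ell(e_L,H\# F)=\ell(e_L,H)+C$, and monotonicity ($F\le G$) finishes. Replacing $G$ by a locally-constant $F$ is the missing idea in your approach.
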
 
Before proving the above claim, we explain how it implies the proposition.
From Claim \ref{cl:shift-spec}, combined with inequalities \eqref{ineq1} \& \eqref{ineq2}, we conclude that 
\begin{equation}\label{eqn:not-equal}
    c(e_k, H) < c(e_k,  H \# G).
\end{equation}  

Suppose that for all $s \in [0,1]$, the Hamiltonian diffeomorphism $\psi \varphi^s_{G}$ has no periodic point of period at most $k$ meeting the support of $G$. Then, one can argue as in the proof of Lemma \ref{l:closing} that the function $s \mapsto c(e_k,  H \# sG )$ is constant. Indeed, the assumption on non-existence of periodic points with period at most $k$ in the support of $G$ implies that $\Spec_k(H \# sG ) = \Spec_k(H)$, for $s\in [0,1]$.  This in turn implies that the continuous function $s \mapsto c(e_k, H \# sG)$ takes values in the measure zero set $\Spec_k(H)$ and so it must be constant.  Hence, we must have $c(e_k, H) = c(e_k,  H  \# G)$ which clearly contradicts \eqref{eqn:not-equal}.

It remains to prove Claim \ref{cl:shift-spec}.
\begin{proof}[Proof of Claim \ref{cl:shift-spec}]
Let $F$ be a Hamiltonian such that  $F \leq G$, and $F|_{T_1} = C > \val(Z) $, where $C$ is some constant, and $F=0$ on a neighborhood of the link components $T_2, \ldots, T_k$. Then, $H \# F \leq H\# G$ and so by Monotonicity we have 
$ \ell(e_L, H \# F) \leq \ell(e_L, H \# G) $.  Thus, it suffices to prove $\ell(e_L, H) + \val(Z) < \ell(e_L, H \# F)$.  As we explain now, we in fact have the identity  
\begin{equation}\label{eqn:Lag-spec1}
    \ell(e_L, H \# F) = \ell(e_L, H ) + C,
\end{equation}
which clearly implies $\ell(e_L, H) + \val(Z) \le \ell(e_L, H \# F)$.  The above identity holds because, for each $s \in [0,1]$ we have 
\begin{equation} \label{eqn:Lag-spec2}
    \Spec_k(H \# sF;L) = \Spec_k(H;L) + s C,
\end{equation}
where $\Spec_k( \cdot ;L)$ denotes the Lagrangian action spectrum.  Indeed, \eqref{eqn:Lag-spec2} implies that $\ell(e_L, H \# sF) - sC \in \Spec_k(H;L)$ which forces the function $s \mapsto \ell(e_L, H \# sF) - sC$ to be constant because $\Spec_k(H;L)$ is of measure zero. Setting $s=1$, then yields \eqref{eqn:Lag-spec1}.  

Finally, here is why \eqref{eqn:Lag-spec2} holds. Note that 
the  Hamiltonian flows of $\Sym^k(H \# sF)$ and $\Sym^k(H)$ coincide on $\Sym^k(L)$; this is a consequence of the fact that $F = C$ on a neighborhood of $T_1$ and that $F$ is supported away from the other components of the link.  Hence, there is a natural  1-1 correspondence between the (capped) Hamiltonian chords, which begin and end on $\Sym^k(L)$, for the two (symmetric product) Hamiltonians $\Sym^k(H \# sF)$ and $\Sym^k(H)$.  Moreover, under this correspondence actions of capped chords differ by the amount $sC$.  This completes the proof of Claim \ref{cl:shift-spec}.
\end{proof} 
This completes the argument.
\end{proof}

\section{Orbifold Lagrangian Floer theory}\label{sec:orb-Lag-Floer}

\subsection{Recap of the theory}

Lagrangian Floer theory for a Lagrangian submanifold disjoint from the orbifold locus has been constructed in \cite{Cho-Poddar}, using the technology of Kuranishi atlases in the sense of Fukaya-Oh-Ohta-Ono \cite{FOOOtoric, FOOOspectral}.  We briefly recall the output of that theory.

Let $\frak{b} \in H^*(IY;\Lambda_{>0})$ be a bulk deformation class; we will always take this to be a multiple of the fundamental class of some non-trivial twisted sector.  The quantum cohomology $QH^*_{orb}(Y;\frak{b})$ has underlying vector space $H^*_{orb}(Y)$, and a ring structure which deforms the Chen-Ruan product.  

Let $L\subset Y^{reg}$ be a compact Lagrangian submanifold which is contained in the regular locus of $Y$. It has a bulk-deformed orbifold Lagrangian Floer cohomology $HF(L,L;\frak{b})$, see  \cite{Cho-Poddar} and \cite[Section 2.2]{MS19} for a brief summary of the theory.   The output can be summarised in our language as:

\begin{prop}[\cite{Cho-Poddar}] The Floer ordered marked flow category associated to the pair $L$ and $\frak{b}$ admits an enrichment in Kuranishi atlases / diagrams. \end{prop}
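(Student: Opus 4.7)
The plan is to construct the required Kuranishi enrichment by assembling, for every pair of generators (Hamiltonian chords, or intersection points after a small perturbation of $L$), the moduli spaces of bulk-deformed orbifold Floer strips, and for every finite tuple the associated moduli spaces of pearly trees / strips with interior orbifold marked points. Since $L\subset Y^{reg}$, the boundary of every such curve stays in the smooth locus of $Y$, so the only source of orbifold geometry comes from interior nodal/marked points mapping into non-trivial twisted sectors. The first step is therefore to recall, from \cite{Cho-Poddar}, the compactified moduli spaces $\ccMbar(L;\beta; \vec{k}, \vec{l})$ of stable representable orbifold maps from bordered orbifold Riemann surfaces (with $\vec l$ interior orbifold marked points constrained, by evaluation, to lie in prescribed twisted sectors of $IY$) to $Y$, in a class $\beta\in H_2(Y,L)$, with the boundary on $L$. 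The bulk class $\frak{b}$ is then inserted at the interior orbifold marked points via the cycle representatives $PD[X^g/C(g)]$.

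Next I would produce the Kuranishi data. Because $L$ is embedded in the smooth locus, linearization of the (inhomogeneous) Cauchy--Riemann operator away from the interior orbifold points is the usual Fredholm story; at each interior orbifold point, one uses uniformizing covers to reduce to equivariant Cauchy--Riemann operators on discs with an isotropy action, which is the local model already analyzed by Chen--Ruan and Cho--Poddar. Kuranishi charts on the uncompactified strata are then obtained by adding obstruction bundles supported away from the orbifold locus; gluing these charts across strata (strip-breaking, disc bubbling, sphere bubbling possibly involving orbifold sphere components) proceeds in the standard way of \cite{FOOOtoric, FOOOspectral}, the novelty being only that one must track the local isotropy groups at orbifold marked points as part of the Kuranishi neighborhood data. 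One checks tangent-bundle compatibility and the required orientation conventions exactly as in the smooth case, with the orientability hypothesis on $L$ (spin, relative spin, or unobstructed in the sense used by Cho--Poddar) ensuring consistent orientations on every face.

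With these Kuranishi charts in place, the final step is to verify that the collection is coherent in the sense required by an ordered marked flow category: the boundary of each $\ccMbar(L;\beta; \vec k, \vec l)$ decomposes into fiber products of lower-dimensional moduli over evaluation at generators, and the Kuranishi structures on the boundary agree with the fiber-product Kuranishi structures coming from this decomposition. This is the $A_\infty$-associativity at the level of moduli and is exactly the axiomatic input needed for the enrichment. Forgetful maps between moduli with different numbers of marked points provide the filtration by marked-point order.

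The main obstacle is \emph{coherent} choice of multisections / abstract perturbations compatible simultaneously with all boundary strata, bulk insertions, and the automorphisms of the orbifold discs. In the orbifold context the automorphism groups of stable maps can jump at interior orbifold points, so one must be careful to perform equivariant regularization on uniformizing covers and then descend; this is precisely what is carried out in \cite{Cho-Poddar}. Since the axiomatic framework used here only records the existence of Kuranishi-enriched moduli with the expected boundary structure, invoking the construction of \cite{Cho-Poddar} (together with the straightforward incorporation of the bulk class $\frak{b}$ at interior orbifold marked points, as in \cite[Section 2.2]{MS19}) furnishes the required enrichment.
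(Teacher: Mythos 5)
Your proposal is essentially the same as what the paper does: the paper offers no proof of this proposition beyond citing \cite{Cho-Poddar} (it is explicitly presented as a restatement, in the flow-category language, of the output of that work), and your argument likewise reduces to invoking the Kuranishi construction of Cho--Poddar together with the standard bulk-insertion mechanism. Your additional outline of the construction (uniformizing covers at interior orbifold points, coherence of boundary fiber products, equivariant perturbations) is a reasonable and accurate summary of what the cited work carries out, so there is nothing to correct.
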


\begin{remark} \label{rmk:lagrangian via global charts}
Given our construction of orbifold Hamiltonian Floer theory in \cite{MSS}, it is natural to ask for a construction of $HF(L,L;\frak{b})$ using global charts rather than Kuranishi atlases. This can likely be achieved by combining the theory from \cite{Hirschi-Hugtenburg}  with the technology of admissible covers, but we have not attempted to carry this out in detail.
\end{remark}

\subsection{Formal structure of  orbifold Lagrangian Floer theory\label{Sec:Lag Floer}}

\emph{We summarise some of the known and expected features of orbifold Lagrangian Floer cohomology which replicate well-known structure in the usual case.}

 Most of the results below follow directly from the work of \cite{Cho-Poddar} and \cite{FOOOspectral}, but some features of open-closed and closed-open maps in this setting have not been written explicitly in the literature. The proofs would be minor variations on the corresponding results in Hamiltonian Floer theory established in \cite{MSS}, but using the Lagrangian Floer global charts from \cite{Hirschi-Hugtenburg}. We axiomatise these remaining pieces to keep the paper of manageable length.

 Let $L$ be an oriented, spin, closed Lagrangian submanifold in $Y^{reg}$.

\begin{theo}[$A_{\infty}$ structure]\label{a:infty}
The complex $CF^*(L,L;\frak{b},b)$ has a unital curved $A_{\infty}$ structure $\{\tilde{m}^{\frak{b},b}\}$ defined by moduli of orbifold holomorphic discs where the unit is given by the constant function $1$.
\end{theo}

\begin{remark}
The Floer complex $CF^*(L,L;\frak{b},b)$ is canonically $\mathbb{Z}/2\mathbb{Z}$-graded because $L$ lies in the trivial sector of $IY$ and $\frak{b} \in QH^{even}_{orb}(Y)$. The signs of the $A_{\infty}$ equations follow  the Koszul sign rule with respect to this  $\mathbb{Z}/2\mathbb{Z}$-grading.
\end{remark}

We denote the unit by $e_L$ and  the space of weak bounding cochains for $\{\tilde{m}^{\frak{b}}\}$ by
$\widehat{\cM}_{\mathrm{weak}}(L, \tilde{m}^{\frak{b}})$.

Suppose now that $L$ is a Lagrangian torus such that $H^1(L,\Lambda_{\ge0}) / H^1(L, 2i\pi\Z) \subset \widehat{\cM}_{\mathrm{weak}}(L, \tilde{m}^{\frak{b}})$.
Let  $W^{\frak{b}}: H^1(L;\Lambda_{\ge0}) \to \Lambda_{>0}$ be defined by
\[
W^{\frak{b}}(b)e_L:=\sum_{k=0}^{\infty} \tilde{m}_k^{\mathfrak{b},b_0}(b_+,\dots,b_+)
\]
for $b=b_0+b_+$, $b_0 \in H^1(L,\mathbb{C})$ and $b_+ \in H^1(L,\Lambda_{>0})$.
We call $W^{\frak{b}}$ the curvature potential.

On the other hand, we define the disk potential
$W^{\frak{b}}_{disc}: H^1(L;\Lambda_{\ge0}) \to \Lambda_{>0}$
by
\[
W^{\frak{b}}_{disc}(b)e_L:=
\sum_{\beta} T^{\omega_Y(\beta)}\exp(\mathfrak{b} \cdot \beta)\exp(b(\partial \beta))\tilde{m}_{0,\beta}(1)
\]

\begin{theo}[curvature equals to the disk potential]\label{a:divisor}
The divisor axiom holds for the $A_{\infty}$ structure $\{\tilde{m}^{\frak{b},b}\}$ so that $W^{\frak{b}}(b)=W^{\frak{b}}_{disc}(b)$.
\end{theo}

As a consequence of Theorem \ref{a:divisor}, we have the following.
 
 \begin{lemma}[\cite{Cho-Poddar},
\cite{Cho-Clifford} Theorem 5.6, \cite{FOOOtoric} Theorem 3.6.2, Proposition 3.7.1, \cite{SheridanFano} Proposition 4.3]
Let $L$ be as above. If $b\in H^1(L;\Lambda_{\ge0})$ is a critical point of $W^{\frak{b}}_{disc}$, then it defines a local system over $L$ for which $HF(L,L;\frak{b},b)=H^*(L;\Lambda)$. If $b$ is a non-degenerate critical point, then $HF(L,L;\frak{b},b)$ is multiplicatively isomorphic to the Clifford algebra $Cl_{\dim(L)}$ over $\Lambda$.
\end{lemma}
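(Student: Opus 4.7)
The plan is to reduce the statement to the standard computation for Lagrangian tori carried out in \cite{Cho-Clifford, FOOOtoric, SheridanFano}, using Theorem~\ref{a:divisor} to pass between the curved $A_{\infty}$-operations and the disc potential $W^{\frak{b}}_{disc}$. First I would pass to the canonical minimal model of $(CF^*(L,L;\frak{b},b),\tilde m^{\frak{b},b})$; its underlying graded vector space is $H^*(L;\Lambda)=\bigwedge^*H^1(L;\Lambda)$ since $L$ is a torus, and the induced operations remain unital and satisfy the divisor axiom. Fix a basis $e_1,\dots,e_n$ of $H^1(L;\bZ)$ with corresponding logarithmic coordinates $y_i$, so that $W^{\frak{b}}_{disc}$ becomes a Laurent series in $y_1,\dots,y_n$ with coefficients in $\Lambda$.

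Next I would analyse the differential $\tilde m^{\frak{b},b}_1$. By the divisor axiom, each orbifold disc class $\beta$ contributes to $\tilde m^{\frak{b},b}_1(\alpha)$ with weight $T^{\omega(\beta)}\exp(\frak{b}\cdot\beta)\exp(b(\partial\beta))$, together with a factor linear in $\langle\alpha,\partial\beta\rangle$. Re-summing over $\beta$ identifies the matrix of $\tilde m^{\frak{b},b}_1$ on $H^*(L;\Lambda)$ with (wedge) multiplication by the $1$-form $\sum_i (\partial_{y_i}W^{\frak{b}}_{disc})(b)\,dy_i$. Because $b$ is a critical point of $W^{\frak{b}}_{disc}$, all of these first partial derivatives vanish, so $\tilde m^{\frak{b},b}_1=0$ and consequently
\[
HF(L,L;\frak{b},b)=H^*(L;\Lambda).
\]

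For the Clifford statement, I would compute the induced binary product on the minimal model by repeating the divisor-axiom analysis one order higher. On generators $e_i,e_j\in H^1(L)$ this yields the anticommutator identity
\[
\tilde m^{\frak{b},b}_2(e_i,e_j)+\tilde m^{\frak{b},b}_2(e_j,e_i)=\bigl(\partial_{y_i}\partial_{y_j}W^{\frak{b}}_{disc}\bigr)(b)\cdot e_L
\]
in $H^*(L;\Lambda)$, paralleling \cite[Theorem~5.6]{Cho-Clifford} and \cite[Proposition~4.3]{SheridanFano}. Combined with associativity of the product on $HF$, this exhibits $HF(L,L;\frak{b},b)$ as a Clifford algebra over $\Lambda$ on $n$ generators with Gram matrix $\Hess(W^{\frak{b}}_{disc})(b)$. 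Non-degeneracy of $b$ makes this matrix invertible, and since $\Lambda$ is an algebraically closed field of characteristic $0$ we can diagonalise it to the standard form, producing the isomorphism $HF(L,L;\frak{b},b)\cong Cl_{\dim(L)}$.

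The main technical step is pushing the divisor axiom, guaranteed by Theorem~\ref{a:divisor} at the curvature level, down to the structure coefficients of $\tilde m^{\frak{b},b}_1$ and $\tilde m^{\frak{b},b}_2$ with higher-degree inputs. This amounts to checking that every contributing orbifold holomorphic disc has a well-defined boundary class in $H_1(L;\bZ)$ and that cyclic insertions of $b\in H^1(L;\Lambda_{\ge 0})$ at boundary marked points re-sum into the factor $\exp(b(\partial\beta))$. The first assertion is automatic because $L\subset Y^{reg}$ avoids the orbifold locus, so the boundary of any contributing orbifold disc lies in the smooth Lagrangian $L$; given this, the combinatorial re-summation is formally identical to the smooth case established in the cited references.
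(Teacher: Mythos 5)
Your proposal is correct and takes essentially the same route as the paper, which states this lemma as a direct consequence of Theorem \ref{a:divisor} (the divisor axiom) together with the cited computations of Cho, Fukaya--Oh--Ohta--Ono and Sheridan; your sketch is a reconstruction of exactly those arguments (minimal model, $\tilde m_1^{\frak{b},b}$ controlled by the first derivatives of $W^{\frak{b}}_{disc}$, the anticommutator identity giving the Clifford relations with Gram matrix $\Hess(W^{\frak{b}}_{disc})(b)$). The only minor imprecision is describing $\tilde m_1^{\frak{b},b}$ as wedge multiplication by $\sum_i(\partial_{y_i}W^{\frak{b}}_{disc})(b)\,dy_i$ — in the cited references it is rather a contraction-type operator built from the same partial derivatives, and one still needs that $H^*(L)$ is generated by $H^1(L)$ to conclude the full differential vanishes — but neither point affects the conclusion at a critical point.
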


From now on, we assume that $b$ is a non-degenerate critical point of $W^{\frak{b}}_{disc}$ so $(HF(L,L;\frak{b},b),\tilde{m}_2^{\mathfrak{b},b})$ is a Clifford algebra. We recall some consequences.

\begin{enumerate}
\item The $A_{\infty}$ structure on $HF(L,L;\frak{b},b)$ is formal so it is quasi-isomorphic to $Cl_{\dim(L)}$. \cite[Corollary 6.4]{SheridanFano}
\item $HH_*(Cl_{\dim(L)})=\Lambda$ and $HH^*(Cl_{\dim(L)})=\Lambda$ 
\item as a $\mathbb{Z}/2\mathbb{Z}$-graded vector space, we have
$Cl_{2k}=\End((\Lambda[0] \oplus \Lambda[1])^{\otimes k})$
and $Cl_{2k+1}=\End((\Lambda[0] \oplus \Lambda[1])^{\otimes k}) \otimes Cl_1$
\end{enumerate}

Whilst the following result is not stated explicitly in \cite{Cho-Poddar}, given their framework it can be deduced as in the smooth case in \cite{FOOOtoric}.

\begin{theo}[cyclic $A_{\infty}$ structure]\label{a:cyclic}
 $HF(L,L;\frak{b},b)$ has a cyclic unital $A_{\infty}$ structure $(\{m^{\frak{b},b}\}, 
\langle \cdot, \cdot \rangle)$ such that
\begin{enumerate}
\item $\langle \cdot, \cdot \rangle$ is given by the Poinc\'are pairing,
\item there is an $A_{\infty}$ quasi-isomorphism $F:(HF(L,L;\frak{b},b),\tilde{m}_k^{\mathfrak{b},b}) \to (HF(L,L;\frak{b},b),m_k^{\mathfrak{b},b})$ such that 
$F=id$ mod $\Lambda_{>0}$.
\end{enumerate}
\end{theo}

\begin{remark}
Since $HF(L,L;\frak{b},b)$ is quasi-isomorphic to a Clifford algebra, it is trivial to define a cyclic unital $A_{\infty}$ structure on it. The content of Theorem \ref{a:cyclic} is that the pairing comes from the Poinc\'are pairing on $L$.
\end{remark}

\begin{remark}
When $Y$ is a smooth symplectic manifold, Theorem \ref{a:cyclic} is verified in \cite[Theorem 3.2.9, Corollary 3.2.22]{FOOOtoric}.
It is unclear if one can define a cyclic unital $A_{\infty}$ structure such that the curvature potential agrees with the disk potential (cf. \cite[Remark 1.3.26, Remark 3.2.31]{FOOOtoric}).
\end{remark}

With the unital cyclic structure, we can define the trace of $HF(L,L;\frak{b},b)$ as follows.
Let $\{e_I: I \subset \{1,\dots,n\}\}$ be a basis of $H^*(L)$.
Let $g_{IJ}=\langle e_I, e_J \rangle$ and $g^{IJ}$ be the inverse matrix.
Let $vol \in HF(L)$ be the volume class (not the unit).
The trace is given by
\[
Z(L,\frak{b},b):=\sum_{I,J} (-1)^* g^{IJ} \langle m_2^{\mathfrak{b},b}(e_I,vol), m_2^{\mathfrak{b},b}(e_J,vol) \rangle
\]

\begin{remark}
Fukaya-Oh-Ohta-Ono give a general definition of the trace in \cite[Definition 1.3.22]{FOOOtoric} for unital Frobenius algebras. For our case where the pairing is given by the Poinc\'are pairing, it specializes to the definition above (cf. \cite[Theorem 3.4.1, 3.5.2]{FOOOtoric} , see also \cite[Equation (2.11)]{Sanda}) 
\end{remark}

\cite[Proposition 3.7.4]{FOOOtoric} explains how to compute the leading order term of $Z(L,\frak{b},b)$.

\begin{prop}\label{p:Z}
Given Theorems \ref{a:infty}, \ref{a:divisor}, \ref{a:cyclic}, we have
\[
Z:=Z(L,\frak{b},b)=\mathrm{det}(\Hess(W^{\frak{b}}_{disc})|_b) \text{ mod }T^{\val(Z)}\Lambda_{>0}
\]
In particular, since we have assumed that $b$ is a Morse critical point, we have $Z \neq 0$.
\end{prop}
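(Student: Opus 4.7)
The strategy is to adapt the proof of \cite[Proposition 3.7.4]{FOOOtoric} to our setting, noting that the leading-order computation only depends on the cyclic unital $A_\infty$ structure on $HF(L,L;\mathfrak{b},b)$ guaranteed by Theorems \ref{a:infty}, \ref{a:divisor}, and \ref{a:cyclic}, together with the fact that $L$ is a torus in the smooth locus of $Y$. Since all the formal ingredients of the Fukaya--Oh--Ohta--Ono argument are in place via our axioms, the orbifold structure of $Y$ plays no role beyond having been absorbed into $\mathfrak{b}$.

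More explicitly, the plan is as follows. First, pick a basis $x_1,\ldots,x_n$ of $H^1(L;\mathbb{C})$, extended to the monomial basis $\{e_I = x_{i_1}\wedge\cdots\wedge x_{i_k}\}_{I\subset\{1,\ldots,n\}}$ of $H^*(L;\mathbb{C})$, with $vol = x_1\wedge\cdots\wedge x_n$. Near the critical point $b$, write $b = b_0 + \sum t_i x_i$ with $t_i\in\Lambda_{\ge 0}$ small, so that $(t_1,\ldots,t_n)$ are local coordinates on $H^1(L;\Lambda_{\ge 0})/H^1(L;2\pi i\mathbb{Z})$. The divisor axiom of Theorem \ref{a:divisor} gives $W^{\mathfrak{b}}(b)=W^{\mathfrak{b}}_{disc}(b)$, and differentiating the defining series for $W^{\mathfrak{b}}(b) \cdot e_L$ in the $t_i$ direction produces the relation
\[
\partial_i\partial_j W^{\mathfrak{b}}_{disc}\big|_{b}\cdot e_L \;=\; \widetilde{m}_2^{\mathfrak{b},b}(x_i,x_j)\cdot vol^{\vee} \;+\; (\text{terms that pair trivially with }vol),
\]
where $vol^{\vee}$ denotes the dual to $vol$ under the Poincar\'e pairing; concretely this identifies the Hessian entries $H_{ij}:=\partial_i\partial_j W^{\mathfrak{b}}_{disc}|_b$ with the coefficients of $vol$ in $\widetilde{m}_2^{\mathfrak{b},b}(x_i,x_j)$.

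Next, transfer to the cyclic model via the quasi-isomorphism $F$ of Theorem \ref{a:cyclic}. Since $F$ is the identity modulo $\Lambda_{>0}$, the leading coefficients of $m_2^{\mathfrak{b},b}$ agree with those of $\widetilde{m}_2^{\mathfrak{b},b}$, so the coefficient of $vol$ in $m_2^{\mathfrak{b},b}(x_i,x_j)$ is also $H_{ij}$ modulo higher-order Novikov corrections. Cyclicity with respect to the Poincar\'e pairing then upgrades this: the matrix coefficients of right-multiplication by $vol$ in the basis $\{e_I\}$, viewed through $\langle -,-\rangle$, are (signed) minors of the Hessian $H=(H_{ij})$. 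Substituting into the definition
\[
Z(L,\mathfrak{b},b)=\sum_{I,J}(-1)^*\, g^{IJ}\langle m_2^{\mathfrak{b},b}(e_I,vol),\, m_2^{\mathfrak{b},b}(e_J,vol)\rangle,
\]
the Poincar\'e duality pairing $g^{IJ}$ couples each term with its complementary index, and the resulting sum is precisely the Leibniz formula for $\det(H)$ (this is where the exterior-algebra/Clifford structure from the non-degeneracy of $b$ is essential, yielding the volume-class multiplication formula for Clifford algebras). A standard filtration argument controls the correction terms: any contribution from $m_k^{\mathfrak{b},b}$ with $k\ge 3$, or from higher Taylor coefficients of $W^{\mathfrak{b}}_{disc}$, increases the Novikov valuation strictly, hence lies in $T^{\val(Z)}\Lambda_{>0}$ once we know the leading order is non-zero.

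The main obstacle is the bookkeeping: tracking Koszul signs in the $\mathbb{Z}/2$-graded cyclic setting when going from $m_2^{\mathfrak{b},b}(e_I,vol)$ to the corresponding minor of $H$, and verifying that the $(-1)^*$ signs in the definition of $Z$ match the sign conventions in the Leibniz expansion of $\det(\Hess W^{\mathfrak{b}}_{disc}|_b)$. This is precisely the combinatorial heart of \cite[Proposition 3.7.4]{FOOOtoric}; we would check that their sign verification, which is purely algebraic once $W^{\mathfrak{b}}=W^{\mathfrak{b}}_{disc}$ is granted, transports verbatim. The non-vanishing $Z\neq 0$ then follows from $b$ being a Morse critical point, so $\det H\neq 0$ in $\Lambda$.
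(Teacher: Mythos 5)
Your proposal follows essentially the same route as the paper, which simply invokes \cite[Proposition 3.7.4]{FOOOtoric} and notes that its argument transports verbatim once Theorems \ref{a:infty}, \ref{a:divisor} and \ref{a:cyclic} are granted; your elaboration of the Clifford-algebra computation and the filtration argument controlling the higher-order corrections is consistent with that. One small slip in your intermediate step: the Hessian entries $\partial_i\partial_j W^{\mathfrak{b}}_{disc}|_b$ appear (up to convention) as the coefficient of the unit $e_L$ in the symmetrized product $\widetilde{m}_2^{\mathfrak{b},b}(x_i,x_j)+\widetilde{m}_2^{\mathfrak{b},b}(x_j,x_i)$ --- the Clifford relations --- rather than as the coefficient of $vol$ in $\widetilde{m}_2^{\mathfrak{b},b}(x_i,x_j)$, and the determinant then enters $Z$ through the square of the volume element in the resulting Clifford algebra, which is $\pm\det(\Hess(W^{\frak{b}}_{disc})|_b)\,e_L$ to leading order.
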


On top of properties of Lagrangian Floer theory itself, we also need some compatibility with quantum cohomology.

\begin{assumption}\label{a:OCCO}
    There are `closed-open' (algebra) and `open-closed' ($QH^*_{orb}(Y;\frak{b})$-module) maps 
\[
\mathcal{CO}: QH^*_{orb}(Y;\frak{b}) \to HH^*(CF(L,L;\frak{b},b)) \qquad \mathrm{and} \qquad \mathcal{OC}: HH_*(CF(L,L;\frak{b},b)) \to QH^*_{orb}(Y;\frak{b}).
\]
Denote the `zeroth order' terms by 
\[
\mathcal{CO}_L^0: QH^*_{orb}(Y;\frak{b}) \to HF^*(L,L;\frak{b},b) \qquad \mathrm{and} \qquad \mathcal{OC}^0_L: HF^*(L,L;\frak{b},b) \to QH^*_{orb}(Y;\frak{b}).
\]
We assume that
    \begin{enumerate}
\item $\langle \mathcal{OC}^0(a),y \rangle_Y=\langle a,\mathcal{CO}^0(y) \rangle_L$ for all $a$ and $y$
\item $\langle \mathcal{OC}^0(a),\mathcal{OC}^0(a') \rangle_Y=\sum_{I,J} (-1)^* g^{IJ} \langle m_2^{\mathfrak{b},b}(e_I,a), m_2^{\mathfrak{b},b}(e_J,a') \rangle$ for all $a,a'$
\item $\mathcal{CO}_L^0$ and $\mathcal{OC}^0_L$ have non-negative valuation
    \end{enumerate}
\end{assumption}

\begin{remark}
The first equation simply comes from changing inputs to outputs in the consideration of moduli. The second equation uses the degeneration of annuli and is called the Cardy relation in the literature.
These equations are proved in \cite[Theorem 3.3.8 and Theorem 3.4.1]{FOOOtoric}.
\end{remark}

\begin{remark}
Assumption \ref{a:OCCO} should hold for any oriented, spin, closed Lagrangian submanifold $L$ with a bounding cochain $b$ solving the Maurer-Cartan equation. But we do not need this level of generality.
\end{remark}

For $(L,b)$ as above, Assumption \ref{a:OCCO} has the following consequences
\begin{enumerate}
\item for any $y$, $\mathcal{CO}^0_L(y)=c_y e_L$ for some $c_y \in \Lambda$ (because $HH^*(CF(L,L;\frak{b},b))$ is rank $1$ generated by the unit)
\item if $a \in \oplus_{d<n} H^d(L)$, then $\langle \mathcal{OC}^0(a),y \rangle_Y=\langle a,c_y e_L\rangle_L=0$ for any $y$
\item $\langle \mathcal{OC}^0(vol),\mathcal{OC}^0(vol) \rangle_Y=Z(L,\mathfrak{b},b) \neq 0$ (see Proposition \ref{p:Z})
\item $CO \circ OC:HH_*(CF(L,L;\frak{b},b)) 
\to HH^*(CF(L,L;\frak{b},b))$ is an isomorphism (because both are of rank $1$ and $Z \neq 0$ implies the non-triviality of the map)
\item there is a module property (see  \cite[Equation (4.2)]{Sanda}): for any $a \in HF(L,L;\frak{b}) $ and $y \in QH^*_{orb}(Y;\frak{b}) $, we have 
\[
\mathcal{OC}^0_L(\mathcal{CO}^0_L(y)a)=y\mathcal{OC}^0_L(a). 
\]
\end{enumerate}

\begin{corol}[cf. \cite{Sanda} Proposition 5.3]\label{c:OCCObound}
Suppose $L$ is a Lagrangian torus such that all the assumptions above are satisfied.
Then the followings are true:
\begin{enumerate}
\item The image of $\mathcal{OC}^0_L$ is a field summand of $QH^*_{orb}(Y;\frak{b})$. We denote the idempotent in the field summand by $e_{Y,L}$.
\item The valuation of $e_{Y,L}$, is at least $-\val(Z)$ so the spectral invariant of $e_{Y,L}$ defines a quasi-morphism with defect at most $\val(Z)=\val(\mathrm{det}(\Hess(W^{\frak{b}}_{disc})|_b))$.
\end{enumerate}
\end{corol}

\begin{proof}

We didn't assume that $\mathcal{OC}^0_L(e_L)=PD([L])$ (Assumption 4.3 (4) of \cite{Sanda}) so the first statement does not strictly follow from \cite[Proposition 5.3]{Sanda}.
However, it is easy to check that we do not need this assumption.
For the sake of the reader, we reproduce the argument here.
On the contrary, even though the second statement is an easy consequence, the explicit bound on the defect, to the authors' knowledge, does not appear in the literature, so we explain how it is obtained.

Let $I$ be the image of $\mathcal{OC}^0_L$. Since $CO \circ OC$ is an isomorphism, we know that $I$ has rank $1$.
By the module property, we know that $I$ is an ideal of $QH^*_{orb}(Y;\frak{b})$.
Let $y=\mathcal{OC}^0(vol)$.
We know that $\mathcal{CO}^0_L(y)=c_y e_L$ for some $c_y \in \Lambda$.
Together with 
\[
\langle \mathcal{CO}^0_L(\mathcal{OC}^0(vol)),vol \rangle_Y=\langle \mathcal{OC}^0(vol),\mathcal{OC}^0(vol) \rangle_Y=Z,
\]
we know that $c_y=Z$.
Let $e_{Y,L}:=\frac{\mathcal{OC}^0_L(vol)}{Z}=\frac{y}{Z}$ so that $\mathcal{CO}^0_L(e_{Y,L})=e_L$.
Then we have 
\[
e_{Y,L} \cdot e_{Y,L}=e_{Y,L} \cdot \frac{\mathcal{OC}^0_L(vol)}{Z}=\frac{1}{Z} \mathcal{OC}^0_L(\mathcal{CO}^0_L(e_{Y,L}) \cdot vol)=\frac{\mathcal{OC}^0_L(vol)}{Z}=e_{Y,L}
\]
Let the unit of $QH^*_{orb}(Y;\frak{b})$ be $e_Y$.
To show that we have a direct sum decomposition $QH^*_{orb}(Y;\frak{b})=I \oplus (e_Y-e_{Y,L})QH^*_{orb}(Y;\frak{b})$, we check that (by the module property again)
\[e_{Y,L} \cdot (e_Y-e_{Y,L})=\frac{1}{Z}(\mathcal{OC}^0_L(vol \cdot \mathcal{CO}^0_L(e_Y-e_{Y,L})))=0\]
because $\mathcal{CO}^0_L(e_Y-e_{Y,L})=e_L-e_L=0$.

Therefore, $I$ is a field summand.
Moreover, the idempotent is given by $e_{Y,L}:=\frac{\mathcal{OC}^0_L(vol)}{Z}$.
Since $\mathcal{OC}^0_L$ increases valuation, the valuation of $e_{Y,L}$ is at least $-\val(Z)$.
The proof that the spectral invariant of $e_{Y,L}$ defines a quasi-morphism with defect at most $\val(Z)=\val(\mathrm{det}(\Hess(W^{\frak{b}}_{disc})|_b))$ follows from \cite{Entov-Polterovich}.

\end{proof}

\subsection{Orbifold Lagrangian spectral invariants}\label{s:LagSpec}

\emph{We axiomatize the properties of orbifold Lagrangian Floer spectral invariants.}

The results of this section are not required for our template proof of the Weyl law, or the application to the closing lemma on $S^2$. Nonetheless, the results are relevant to the general question raised in the introduction of creating new periodic orbits meeting a set $U$ by a perturbation of the Hamiltonian supported on $U$. Since this is slightly orthogonal to our main aim, we will present fewer details in this section.

In parallel to the previous section, we list the Floer theoretic  properties which are needed to define Lagrangian spectral invariants and establish their properties.  Some of these are encompassed by the Hamiltonian invariance of orbifold Lagrangian Floer theory, as asserted in \cite{Cho-Poddar}, whilst others require compatibility with open-closed morphisms. Note in particular that in establishing Hamiltonian invariance, \cite{Cho-Poddar} construct Kuranishi atlases underlying the moduli spaces which define $CF^*(L,K,\mathfrak{b},b)$ for a pair of distinct Lagrangians $L,K$ lying in the regular locus of an orbifold. We will be exclusively concerned with the case in which $K$ is the Hamiltonian image $\phi_H^1(L)$ of $L$ (noting that a smooth Hamiltonian on the orbifold will preserve the regular stratum), in which case we will write the corresponding complex as $CF(L,H,\mathfrak{b},b)$.

Thus, let $(L,b)$ be as above.
Let $H$ be a Hamiltonian function such that $L \pitchfork \phi_H^1(L)$ and $CF(L,H,\mathfrak{b},b)$ be the vector space generated by time-$1$ Hamiltonian chord of $X_H$ from $L$ to $L$.

\begin{theo}
Orbifold Lagrangian Floer cohomology has the following properties:
\begin{enumerate}
    \item  $CF(L,H,\mathfrak{b},b)$ admits a differential (with non-negative valuation) making it into a chain complex.
    \item  There are chain maps $\Phi_{0,H}: CF(L,L,\mathfrak{b},b) \to CF(L,H,\mathfrak{b},b)$  and $\Phi_{H,0}: CF(L,H,\mathfrak{b},b) \to CF(L,L,\mathfrak{b},b)$ which are quasi-inverse to each other. The valuation of them decreases by at most the Hofer norm of $H$.
    \item If $H'$ is another Hamiltonian such that $L \pitchfork \phi_{H'}^1(L)$, then there are chain maps $\Phi_{H,H'}: CF(L,H,\mathfrak{b},b) \to CF(L,H',\mathfrak{b},b)$  and $\Phi_{H',H}: CF(L,H',\mathfrak{b},b) \to CF(L,H,\mathfrak{b},b)$ which are quasi-inverse to each other. The valuation of them decreases by at most the Hofer norm of $H-H'$.
    \item If $L \pitchfork \phi_{H\#H'}^1(L)$, then there is a valuation non-decreasing triangle product map
    $\Phi_{H,H', H\#H'}: CF(L,H,\mathfrak{b},b) \otimes CF(L,H',\mathfrak{b},b) \to CF(L,H \# H',\mathfrak{b},b)$ which can be descended to a bilinear map on the cohomology
    \item The maps $\Phi_{0,H}, \Phi_{H,0}, \Phi_{H,H'}, \Phi_{H',H}$ for all $H, H'$ are all compatible with each other and form a direct system; they are also compatible with $\Phi_{H,H', H\#H'}$ and the product structure on $HF(L,L,\mathfrak{b},b)$.
    
\end{enumerate}
\end{theo}

From these assumptions, we can define the Lagrangian spectral invariant as usual.

\begin{definition}
Let $e \in HF(L,L,\mathfrak{b},b) $, $H \in C^{\infty}([0,1] \times Y)$ be a non-degenerate Hamiltonian and $\mathfrak{b}$ as above.
The bulk-deformed spectral invariant is
\[
\ell(e,H) := \ell^{\mathfrak{b}}_{(L,b)}(e,H)= \inf \{a \in \mathbb{R}| \Phi_{0,H}(e) \in \im (HF(L,H,\mathfrak{b},b)^{<a} \to HF(L,H,\mathfrak{b},b))\}
\]

\end{definition}

It satisfies spectrality, Hofer-Lipschitz continuity, Lagrangian control, homotopy invariance and the triangle inequality.

\begin{assumption}
There are `closed-open' and `open-closed' maps (both valuation non-decreasing)
\[
\mathcal{CO}_{(L,H)}^0: HF^*_{orb}(H;\frak{b}) \to HF^*(L,H;\frak{b},b)  \qquad \mathrm{and} \qquad \mathcal{OC}^0_{(L,H)}: HF^*(L,H;\frak{b},b) \to HF^*_{orb}(H;\frak{b})
\]
such that $\mathcal{CO}_{(L,H)}^0 \circ PSS_H=\Phi_{0,H} \circ \mathcal{CO}_{L}^0$ and $\mathcal{OC}_{(L,H)}^0 \circ \Phi_{0,H}= PSS_H\circ \mathcal{CO}_{L}^0$.
\end{assumption}

\begin{corol}\label{c:LagSpec}
Under the hypothesis of Corollary \ref{c:OCCObound} and for the identity class $e_L \in HF(L,L;\frak{b},b)$, we have $\ell(e_L/Z,H) \ge c(e_{Y,L},H) \ge \ell(e_L,H)$ for all Hamiltonians $H$. So in particular, $\ell(e_L,-)$ also defines a quasi-morphism with defect at most $\val(Z)$ and its homogenization is the same as that of $c(e_{Y,L},-)$.
\end{corol}

\begin{proof}
We follow the same notations in  Corollary \ref{c:OCCObound}.
The fact that $e_{Y,L}=\frac{\mathcal{OC}^0_L(vol)}{Z}$ implies that $\ell(vol/Z,H) \ge c(e_{Y,L},H)$ for all $H$.
On the other hand, $\mathcal{CO}^0_L(e_{Y,L})=e_L$ implies that $c(e_{Y,L},H) \ge \ell(e_{L},H)$ for all $H$.
By the triangle inequality, we have $\ell(e_{L}/Z,H)=\ell(e_{L}/Z,H) + \ell(vol,0) \ge \ell(vol/Z,H)$ so the result follows.
\end{proof}

\bibliographystyle{alpha}
\bibliography{biblio}

@preamble{"\def\cprime{$'$} "}

@article {Herman1,
    AUTHOR = {Herman, Michael-R.},
     TITLE = {Exemples de flots hamiltoniens dont aucune perturbation en
              topologie {$C^\infty$} n'a d'orbites p\'{e}riodiques sur un ouvert
              de surfaces d'\'{e}nergies},
   JOURNAL = {C. R. Acad. Sci. Paris S\'{e}r. I Math.},
  FJOURNAL = {Comptes Rendus de l'Acad\'{e}mie des Sciences. S\'{e}rie I.
              Math\'{e}matique},
    VOLUME = {312},
      YEAR = {1991},
    NUMBER = {13},
     PAGES = {989--994},
      ISSN = {0764-4442},
   MRCLASS = {58F05 (58F22 70H05)},
  MRNUMBER = {1113091},
MRREVIEWER = {Michel Willem},
}

@article {Herman2,
    AUTHOR = {Herman, Michael-R.},
     TITLE = {Diff\'{e}rentiabilit\'{e} optimale et contre-exemples \`a la fermeture
              en topologie {$C^\infty$} des orbites r\'{e}currentes de flots
              hamiltoniens},
   JOURNAL = {C. R. Acad. Sci. Paris S\'{e}r. I Math.},
  FJOURNAL = {Comptes Rendus de l'Acad\'{e}mie des Sciences. S\'{e}rie I.
              Math\'{e}matique},
    VOLUME = {313},
      YEAR = {1991},
    NUMBER = {1},
     PAGES = {49--51},
      ISSN = {0764-4442},
   MRCLASS = {58F05 (58F25)},
  MRNUMBER = {1115947},
}

@article {Fish-Hofer20,
    AUTHOR = {Fish, Joel W. and Hofer, Helmut H. W.},
     TITLE = {Almost existence from the feral perspective and some
              questions},
   JOURNAL = {Ergodic Theory Dynam. Systems},
  FJOURNAL = {Ergodic Theory and Dynamical Systems},
    VOLUME = {42},
      YEAR = {2022},
    NUMBER = {2},
     PAGES = {792--834},
      ISSN = {0143-3857},
   MRCLASS = {32Q65 (37C99 37J06 37J39 53D99)},
  MRNUMBER = {4362909},
       DOI = {10.1017/etds.2021.20},
       URL = {https://doi.org/10.1017/etds.2021.20},
}

@article {Entov-Polterovich09,
    AUTHOR = {Entov, Michael and Polterovich, Leonid},
     TITLE = {Rigid subsets of symplectic manifolds},
   JOURNAL = {Compos. Math.},
  FJOURNAL = {Compositio Mathematica},
    VOLUME = {145},
      YEAR = {2009},
    NUMBER = {3},
     PAGES = {773--826},
      ISSN = {0010-437X,1570-5846},
   MRCLASS = {53D40 (53D12 53D35)},
  MRNUMBER = {2507748},
MRREVIEWER = {Martin\ Pinsonnault},
       DOI = {10.1112/S0010437X0900400X},
       URL = {https://doi.org/10.1112/S0010437X0900400X},
}

@article{Xue,
  doi = {10.48550/ARXIV.2207.06208},
  
  journal = {arXiv:2207.06208},
  
  author = {Xue, Jinxin},
  
  keywords = {Symplectic Geometry (math.SG), Dynamical Systems (math.DS), FOS: Mathematics, FOS: Mathematics},
  
  title = {Strong closing lemma and {KAM} normal form},
  
  publisher = {arXiv},
  
  year = {2022},
  
  copyright = {Creative Commons Attribution 4.0 International}
}

@article{CDPT,
  doi = {10.48550/ARXIV.2206.04738},
  
  journal = {arXiv:2206.04738},
  
  author = {Chaidez, Julian and Datta, Ipsita and Prasad, Rohil and Tanny, Shira},
  
  keywords = {Symplectic Geometry (math.SG), Dynamical Systems (math.DS), FOS: Mathematics, FOS: Mathematics, 53D10, 53D42},
  
  title = {Contact homology and the strong closing lemma for ellipsoids},
  
  publisher = {arXiv},
  
  year = {2022},
  
  copyright = {Creative Commons Attribution 4.0 International}
}

@article {Buhovsky23,
    AUTHOR = {Buhovsky, Lev},
     TITLE = {On two remarkable groups of area-preserving homeomorphisms},
   JOURNAL = {J. Math. Phys. Anal. Geom.},
  FJOURNAL = {Journal of Mathematical Physics, Analysis, Geometry},
    VOLUME = {19},
      YEAR = {2023},
    NUMBER = {2},
     PAGES = {339--373},
      ISSN = {1812-9471,1817-5805},
   MRCLASS = {53D05},
  MRNUMBER = {4633987},
MRREVIEWER = {St\'ephane\ Tchuiaga},
}

@article {Cineli-Seyfaddini,
    AUTHOR = {{\c C}ineli, Erman and Seyfaddini, Sobhan},
     TITLE = {The strong closing lemma and {H}amiltonian pseudo-rotations},
   JOURNAL = {J. Mod. Dyn.},
  FJOURNAL = {Journal of Modern Dynamics},
    VOLUME = {20},
      YEAR = {2024},
     PAGES = {299--318},
      ISSN = {1930-5311,1930-532X},
   MRCLASS = {37J20 (37J40 53D40)},
  MRNUMBER = {4799466},
       DOI = {10.3934/jmd.2024008},
       URL = {https://doi.org/10.3934/jmd.2024008},
}

@article {Irie24,
    AUTHOR = {Irie, Kei},
     TITLE = {Strong closing property of contact forms and action selecting
              functors},
   JOURNAL = {J. Fixed Point Theory Appl.},
  FJOURNAL = {Journal of Fixed Point Theory and Applications},
    VOLUME = {26},
      YEAR = {2024},
    NUMBER = {2},
     PAGES = {Paper No. 15, 23},
      ISSN = {1661-7738,1661-7746},
   MRCLASS = {53D10 (18F99 37J55)},
  MRNUMBER = {4729894},
       DOI = {10.1007/s11784-024-01102-1},
       URL = {https://doi.org/10.1007/s11784-024-01102-1},
}

@article{EH21,
	author = {O. Edtmair and M. Hutchings},
	date-added = {2022-07-22 10:52:23 -0400},
	date-modified = {2022-07-22 10:53:50 -0400},
	journal = {arXiv:2110.02463, Forum Math. Sigma (to appear)},
	title = {{PFH} spectral invariants and ${C}^{\infty}$ closing lemmas},
	year = {2021}}

@article{CPZ21,
	author = {D. Cristofaro-Gardiner and R. Prasad and B. Zhang},
	date-added = {2022-07-22 10:49:37 -0400},
	date-modified = {2022-07-22 10:51:21 -0400},
	journal = {arXiv:2110.02925},
	title = {Periodic {F}loer homology and the smooth closing lemma for area-preserving surface diffeomorphisms},
	year = {2021}}

@article{MSS,
    title={Orbifold {H}amiltonian {F}loer theory for global quotients},
    author = {Cheuk Yu Mak and Sobhan Seyfaddini and Ivan Smith},
year={2025},
journal={arXiv:2502.11290},
primaryClass={math.SG},
}

@article {CHT,
    AUTHOR = {Colin, Vincent and Honda, Ko and Tian, Yin},
     TITLE = {Applications of higher-dimensional {H}eegaard {F}loer homology
              to contact topology},
   JOURNAL = {J. Topol.},
  FJOURNAL = {Journal of Topology},
    VOLUME = {17},
      YEAR = {2024},
    NUMBER = {3},
     PAGES = {Paper No. e12349, 77},
      ISSN = {1753-8416,1753-8424},
   MRCLASS = {57K43 (53D10 53D40)},
  MRNUMBER = {4775502},
       DOI = {10.1112/topo.12349},
       URL = {https://doi.org/10.1112/topo.12349},
}

@article{KY,
      title={Heegaard {F}loer {S}ymplectic homology and {V}iterbo's isomorphism theorem in the context of multiple particles}, 
      author={Roman Krutowski and Tianyu Yuan},
      year={2023},
      journal={arXiv:2311.17031},
      primaryClass={math.SG},
      url={https://arxiv.org/abs/2311.17031}, 
}

@article{Brendel-Kim,
    author = {Brendel, Jo\'e and Kim, Joontae},
    title = {Lagrangian split tori on {$S^2 \times S^2$} and billiards},
    journal = {arXiv:2502.03324},
    year = {2025}, 
}

@article {CGHMSS,
    AUTHOR = {Cristofaro-Gardiner, Daniel and Humili\`ere, Vincent and Mak,
              Cheuk Yu and Seyfaddini, Sobhan and Smith, Ivan},
     TITLE = {Quantitative {H}eegaard {F}loer cohomology and the {C}alabi
              invariant},
   JOURNAL = {Forum Math. Pi},
  FJOURNAL = {Forum of Mathematics. Pi},
    VOLUME = {10},
      YEAR = {2022},
     PAGES = {Paper No. e27, 59},
      ISSN = {2050-5086},
   MRCLASS = {53D40 (37K06)},
  MRNUMBER = {4524364},
MRREVIEWER = {Christopher\ T.\ Woodward},
       DOI = {10.1017/fmp.2022.18},
       URL = {https://doi.org/10.1017/fmp.2022.18},
}

@article{CGHMSS22,
	author = {Dan Cristofaro-Gardiner and Vincent Humili\`ere and Cheuk Yu Mak and Sobhan Seyfaddini and Ivan Smith},
	date-added = {2022-07-22 10:46:29 -0400},
	date-modified = {2022-07-22 10:49:22 -0400},
	journal = {Ann. Sci. Éc. Norm. Supér. (to appear), arXiv:2206.10749},
	title = {Subleading asymptotics of link spectral invariants and homeomorphism groups of surfaces},
	year = {2022}}

@article{airie,
	author = {Asaoka, Masayuki and Irie, Kei},
	doi = {10.1007/s00039-016-0386-3},
	fjournal = {Geometric and Functional Analysis},
	issn = {1016-443X},
	journal = {Geom. Funct. Anal.},
	mrclass = {37J10 (37E30 37J45)},
	mrnumber = {3568031},
	mrreviewer = {Dimitar Angelov Kolev},
	number = {5},
	pages = {1245--1254},
	title = {A {$C^\infty$} closing lemma for {H}amiltonian diffeomorphisms of closed surfaces},
	url = {https://doi-org.ezproxy.is.ed.ac.uk/10.1007/s00039-016-0386-3},
	volume = {26},
	year = {2016},
	bdsk-url-1 = {https://doi-org.ezproxy.is.ed.ac.uk/10.1007/s00039-016-0386-3},
	bdsk-url-2 = {https://doi.org/10.1007/s00039-016-0386-3}}

@article{irie,
	author = {Irie, Kei},
	doi = {10.3934/jmd.2015.9.357},
	fjournal = {Journal of Modern Dynamics},
	issn = {1930-5311},
	journal = {J. Mod. Dyn.},
	mrclass = {37J45 (53D42)},
	mrnumber = {3436746},
	mrreviewer = {Sheila Sandon},
	pages = {357--363},
	title = {Dense existence of periodic {R}eeb orbits and {ECH} spectral invariants},
	url = {https://doi.org/10.3934/jmd.2015.9.357},
	volume = {9},
	year = {2015},
	bdsk-url-1 = {https://doi.org/10.3934/jmd.2015.9.357}}

@book{Polterovich-Rosen,
	author = {Polterovich, Leonid and Rosen, Daniel},
	doi = {10.1090/crmm/034},
	isbn = {978-1-4704-1693-5},
	mrclass = {53Dxx (22E65 57R17 57R58 58E05 81P45)},
	mrnumber = {3241729},
	mrreviewer = {Charles-Michel Marle},
	pages = {xii+203},
	publisher = {American Mathematical Society, Providence, RI},
	series = {CRM Monograph Series},
	title = {Function theory on symplectic manifolds},
	url = {https://doi.org/10.1090/crmm/034},
	volume = {34},
	year = {2014},
	bdsk-url-1 = {https://doi.org/10.1090/crmm/034}}

@article{Entov-Polterovich,
	author = {Entov, Michael and Polterovich, Leonid},
	doi = {10.1155/S1073792803210011},
	fjournal = {International Mathematics Research Notices},
	issn = {1073-7928},
	journal = {Int. Math. Res. Not.},
	mrclass = {53D35 (53D40 53D45)},
	mrnumber = {1979584},
	mrreviewer = {Ignasi Mundet-Riera},
	number = {30},
	pages = {1635--1676},
	title = {Calabi quasimorphism and quantum homology},
	url = {https://doi.org/10.1155/S1073792803210011},
	year = {2003},
	bdsk-url-1 = {https://doi.org/10.1155/S1073792803210011}}

@article {Silversmith,
    AUTHOR = {Silversmith, Rob},
     TITLE = {Gromov-{W}itten invariants of {$\mathrm{Sym}^d\Bbb {P}^r$}},
   JOURNAL = {Trans. Amer. Math. Soc.},
  FJOURNAL = {Transactions of the American Mathematical Society},
    VOLUME = {376},
      YEAR = {2023},
    NUMBER = {9},
     PAGES = {6573--6622},
      ISSN = {0002-9947,1088-6850},
   MRCLASS = {14N35},
  MRNUMBER = {4630785},
MRREVIEWER = {Matthew\ B.\ Young},
       DOI = {10.1090/tran/8938},
       URL = {https://doi.org/10.1090/tran/8938},
}

@article {Pol-Shel21,
    AUTHOR = {Polterovich, Leonid and Shelukhin, Egor},
     TITLE = {Lagrangian configurations and {H}amiltonian maps},
   JOURNAL = {Compos. Math.},
  FJOURNAL = {Compositio Mathematica},
    VOLUME = {159},
      YEAR = {2023},
    NUMBER = {12},
     PAGES = {2483--2520},
      ISSN = {0010-437X,1570-5846},
   MRCLASS = {53D12 (37J06 53D05)},
  MRNUMBER = {4651502},
       DOI = {10.1112/s0010437x23007455},
       URL = {https://doi-org.sheffield.idm.oclc.org/10.1112/s0010437x23007455},
}

@article{CGHS20,
	archiveprefix = {arXiv},
	author = {Dan Cristofaro-Gardiner and Vincent Humili\`ere and Sobhan Seyfaddini},
	eprint = {2001.01792},
	journal = {Annals of Mathematics},
	primaryclass = {math.SG},
	title = {Proof of the simplicity conjecture},
	year = {2024},
    volume = {199},
    pages = {181-257},
}

@article{HLS1,
	author = {Humili\`ere, Vincent and Leclercq, R\'{e}mi and Seyfaddini, Sobhan},
	doi = {10.1215/00127094-2881701},
	fjournal = {Duke Mathematical Journal},
	issn = {0012-7094},
	journal = {Duke Math. J.},
	mrclass = {53D40 (37J05)},
	mrnumber = {3322310},
	mrreviewer = {Manuel de Le\'{o}n},
	number = {4},
	pages = {767--799},
	title = {Coisotropic rigidity and {$C^0$}-symplectic geometry},
	url = {https://doi.org/10.1215/00127094-2881701},
	volume = {164},
	year = {2015},
	bdsk-url-1 = {https://doi.org/10.1215/00127094-2881701}}

@article{calabi,
	author = {Calabi, Eugenio},
	journal = {Problems in analysis ({L}ectures at the {S}ympos. in honor of {S}alomon {B}ochner, {P}rinceton {U}niv., {P}rinceton, {N}.{J}., 1969)},
	mrclass = {58D05 (53C15 57D25)},
	mrnumber = {0350776},
	mrreviewer = {Alan Weinstein},
	pages = {1--26},
	publisher = {Princeton Univ. Press, Princeton, N.J.},
	title = {On the group of automorphisms of a symplectic manifold},
	year = {1970}}

@article{Cho-Clifford,
	author = {Cho, Cheol-Hyun},
	doi = {10.1155/S1073792804132716},
	fjournal = {International Mathematics Research Notices},
	issn = {1073-7928},
	journal = {Int. Math. Res. Not.},
	mrclass = {53D40 (53D12)},
	mrnumber = {2057871},
	mrreviewer = {Michael J. Usher},
	number = {35},
	pages = {1803--1843},
	title = {Holomorphic discs, spin structures, and {F}loer cohomology of the {C}lifford torus},
	url = {https://doi-org.ezp.lib.cam.ac.uk/10.1155/S1073792804132716},
	year = {2004},
	bdsk-url-1 = {https://doi-org.ezp.lib.cam.ac.uk/10.1155/S1073792804132716},
	bdsk-url-2 = {https://doi.org/10.1155/S1073792804132716}}

@article{fathi,
	author = {Fathi, A.},
	fjournal = {Annales Scientifiques de l'\'Ecole Normale Sup\'erieure. Quatri\`eme S\'erie},
	issn = {0012-9593},
	journal = {Ann. Sci. \'Ecole Norm. Sup. (4)},
	mrclass = {58F11 (28D10 57S05)},
	mrnumber = {584082},
	mrreviewer = {Hans G. Bothe},
	number = {1},
	pages = {45--93},
	title = {Structure of the group of homeomorphisms preserving a good measure on a compact manifold},
	url = {http://www.numdam.org/item?id=ASENS_1980_4_13_1_45_0},
	volume = {13},
	year = {1980},
	bdsk-url-1 = {http://www.numdam.org/item?id=ASENS_1980_4_13_1_45_0}}

@article {FOOOtoric,
    AUTHOR = {Fukaya, Kenji and Oh, Yong-Geun and Ohta, Hiroshi and Ono,
              Kaoru},
     TITLE = {Lagrangian {F}loer theory and mirror symmetry on compact toric
              manifolds},
   JOURNAL = {Ast\'erisque},
  FJOURNAL = {Ast\'erisque},
    NUMBER = {376},
      YEAR = {2016},
     PAGES = {vi+340},
      ISSN = {0303-1179,2492-5926},
      ISBN = {978-2-85629-825-1},
   MRCLASS = {53D40 (14M25 53D37 53D45)},
  MRNUMBER = {3460884},
MRREVIEWER = {Christopher\ T.\ Woodward},
}

@article{FOOOspectral,
	author = {Fukaya, Kenji and Oh, Yong-Geun and Ohta, Hiroshi and Ono, Kaoru},
	doi = {10.1090/memo/1254},
	fjournal = {Memoirs of the American Mathematical Society},
	isbn = {978-1-4704-3625-4; 978-1-4704-5325-1},
	issn = {0065-9266},
	journal = {Mem. Amer. Math. Soc.},
	mrclass = {53D40 (14M25 20F65 53D12 53D20 53D45)},
	mrnumber = {3986938},
	mrreviewer = {Jun Zhang},
	number = {1254},
	pages = {x+266},
	title = {Spectral invariants with bulk, quasi-morphisms and {L}agrangian {F}loer theory},
	url = {https://doi-org.ezproxy.is.ed.ac.uk/10.1090/memo/1254},
	volume = {260},
	year = {2019},
	bdsk-url-1 = {https://doi-org.ezproxy.is.ed.ac.uk/10.1090/memo/1254},
	bdsk-url-2 = {https://doi.org/10.1090/memo/1254}}

@article {Sanda,
    AUTHOR = {Sanda, Fumihiko},
     TITLE = {Computation of quantum cohomology from {F}ukaya categories},
   JOURNAL = {Int. Math. Res. Not. IMRN},
  FJOURNAL = {International Mathematics Research Notices. IMRN},
      YEAR = {2021},
    NUMBER = {1},
     PAGES = {769--803},
      ISSN = {1073-7928,1687-0247},
   MRCLASS = {53D37 (18F99)},
  MRNUMBER = {4198511},
MRREVIEWER = {Hai-Long\ Her},
       DOI = {10.1093/imrn/rnaa089},
       URL = {https://doi.org/10.1093/imrn/rnaa089},
}

@article {SheridanFano,
    AUTHOR = {Sheridan, Nick},
     TITLE = {On the {F}ukaya category of a {F}ano hypersurface in
              projective space},
   JOURNAL = {Publ. Math. Inst. Hautes \'Etudes Sci.},
  FJOURNAL = {Publications Math\'ematiques. Institut de Hautes \'Etudes
              Scientifiques},
    VOLUME = {124},
      YEAR = {2016},
     PAGES = {165--317},
      ISSN = {0073-8301,1618-1913},
   MRCLASS = {53D37 (14F05 14J33 14N35)},
  MRNUMBER = {3578916},
MRREVIEWER = {Christian\ Lehn},
       DOI = {10.1007/s10240-016-0082-8},
       URL = {https://doi.org/10.1007/s10240-016-0082-8},
}

@article{Hirschi-Hugtenburg,
    author = {Hirschi, Amanda and Hugtenburg, Kai} ,
    title = {An open-closed {D}eligne-{M}umford field theory associated to a {L}agrangian submanifold},
    journal = {arXiv:2501.04687},
    year = {2025}
}

@article{MS19,
	author = {Mak, Cheuk Yu and Smith, Ivan},
	journal = {Geom. Funct. Anal.},
	title = {Non-displaceable {L}agrangian links in four-manifolds},
volume={31},
pages={438-481},
	year = {2021}}

@book{mcduff-salamon,
	author = {McDuff, Dusa and Salamon, Dietmar},
	doi = {10.1093/oso/9780198794899.001.0001},
	edition = {Third},
	isbn = {978-0-19-879490-5; 978-0-19-879489-9},
	mrclass = {53D35 (53D40 57R17 57R57 57R58)},
	mrnumber = {3674984},
	mrreviewer = {Hansj\"{o}rg Geiges},
	pages = {xi+623},
	publisher = {Oxford University Press, Oxford},
	series = {Oxford Graduate Texts in Mathematics},
	title = {Introduction to symplectic topology},
	url = {https://doi.org/10.1093/oso/9780198794899.001.0001},
	year = {2017},
	bdsk-url-1 = {https://doi.org/10.1093/oso/9780198794899.001.0001}}

@article{muller-oh,
	author = {Oh, Yong-Geun and M{\"u}ller, Stefan},
	fjournal = {The Journal of Symplectic Geometry},
	issn = {1527-5256},
	journal = {J. Symplectic Geom.},
	mrclass = {53D35},
	mrnumber = {2377251 (2009k:53227)},
	mrreviewer = {Martin Pinsonnault},
	number = {2},
	pages = {167--219},
	title = {The group of {H}amiltonian homeomorphisms and {$C^0$}--symplectic topology},
	url = {http://projecteuclid.org/getRecord?id=euclid.jsg/1202004455},
	volume = {5},
	year = {2007},
	bdsk-url-1 = {http://projecteuclid.org/getRecord?id=euclid.jsg/1202004455}}

@article{usher11,
	author = {Usher, Michael},
	doi = {10.2140/gt.2011.15.1313},
	fjournal = {Geometry \& Topology},
	issn = {1465-3060},
	journal = {Geom. Topol.},
	mrclass = {53D45 (53D40)},
	mrnumber = {2825315},
	mrreviewer = {David E. Hurtubise},
	number = {3},
	pages = {1313--1417},
	title = {Deformed {H}amiltonian {F}loer theory, capacity estimates and {C}alabi quasimorphisms},
	url = {https://doi.org/10.2140/gt.2011.15.1313},
	volume = {15},
	year = {2011},
	bdsk-url-1 = {https://doi.org/10.2140/gt.2011.15.1313}}

@article {Cho-Poddar,
    AUTHOR = {Cho, Cheol-Hyun and Poddar, Mainak},
     TITLE = {Holomorphic orbi-discs and {L}agrangian {F}loer cohomology of
              symplectic toric orbifolds},
   JOURNAL = {J. Differential Geom.},
  FJOURNAL = {Journal of Differential Geometry},
    VOLUME = {98},
      YEAR = {2014},
    NUMBER = {1},
     PAGES = {21--116},
      ISSN = {0022-040X},
   MRCLASS = {53D40 (32Q65 57R18)},
  MRNUMBER = {3263515},
MRREVIEWER = {Saibal Ganguli},
       URL = {http://projecteuclid.org/euclid.jdg/1406137695},
}

{\small

\medskip
\noindent Cheuk Yu Mak\\
\noindent School of Mathematical and Physical Sciences, Hicks Building, University of Sheffield, Sheffield, S10 2TN, UK\\
{\it e-mail:} c.mak@sheffield.ac.uk

\medskip
 \noindent Sobhan Seyfaddini\\
\noindent Department of Mathematics, ETH Zurich, 8092, Zürich,
Switzerland. \\
 {\it e-mail:}  sobhan.seyfaddini@math.ethz.ch.

 \medskip
 \noindent Ivan Smith\\
\noindent Centre for Mathematical Sciences, University of Cambridge, Wilberforce Road, CB3 0WB, U.K.\\
{\it e-mail:} is200@cam.ac.uk

}

\end{document}